%
% Harris' theorem for Markov chains
% 
%-----------------------------------------------------------------

\documentclass[12pt,longbibliography]{article}

% Packages
%-----------------------------------------------------------------

% Allow direct use of accents such as á é ñ.
\usepackage[utf8]{inputenc}

% This is a good idea to have some symbols included within the font
% properly:
% http://tex.stackexchange.com/questions/664/why-should-i-use-usepackaget1fontenc
\usepackage[T1]{fontenc} 
\usepackage{bm}

% Set type of paper and margins.
\usepackage[a4paper, margin=2.7cm]{geometry}

\usepackage{amsmath, amsthm, amsfonts, amssymb}
\usepackage{mathrsfs}           % \mathscr font.
\usepackage{color}
\usepackage[normalem]{ulem}

% Generate a PDF with hyperlinks in references.
\usepackage[colorlinks=true,linkcolor=blue,citecolor=blue,urlcolor=blue,breaklinks]{hyperref}

% Double-stroke font (\mathbbm).
\usepackage{bbm} 

%Numerotation par sectiion
\numberwithin{equation}{section}

% Bibliography
%-----------------------------------------------------------------

% This uses a bibliography style which hyperlinks the paper titles to
% the paper URL specified in the bibtex file. It also uses natbib,
% which cites papers by name such as Euler (1770) instead of [17].

\usepackage{breakurl}
\usepackage{natbib}
\usepackage{url}
\bibliographystyle{plainnat-linked-initials}
% \bibliographystyle{plain}

% Shortcuts
%-----------------------------------------------------------------

% Absolute value 

% Inner product

% Norm

% Triple norm

% Common special letters
\def\BB{{\cal B}}

\def\LL{{\cal L}}
\def\N{\mathbb{N}}
\def\R{\mathbb{R}}

\def\D{\mathcal{D}}
\def\M{\mathcal{M}}
\def\P{\mathcal{P}}
\def\cN{\mathcal{N}}
\def\cU{\mathcal{U}}

\DeclareMathOperator{\supp}{supp}

% Notation for differentials
\def\d{\,\mathrm{d}}

\def \ddt{\frac{\mathrm{d}}{\mathrm{d}t}}
\def \ddt{\frac{\mathrm{d}}{\mathrm{d}t}}

\def\:{\colon}

\def\1{\mathbbm{1}}
\def\NormT{|\hskip-0.04cm|\hskip-0.04cm|}

% Stéphane

\newcommand{\beqn}{\begin{equation}}
\newcommand{\eeqn}{\end{equation}}
\newcommand{\bear}{\begin{eqnarray}}
\newcommand{\eear}{\end{eqnarray}}
\newcommand{\bean}{\begin{eqnarray*}}
\newcommand{\eean}{\end{eqnarray*}}

\newcommand{\be}{\begin{equation}}
\newcommand{\ee}{\end{equation}}
\newcommand{\ba}{\begin{aligned}}
\newcommand{\ea}{\end{aligned}}

\newcommand{\bal}{\begin{aligned}}
\newcommand{\eal}{\end{aligned}}

\def\Nt{|\hskip-0.04cm|\hskip-0.04cm|}

\def\eps{{\varepsilon}}

\def\BBB{{\mathscr B}}

\def\AA{{\mathcal A}}
\def\BB{{\mathcal B}}
\def\CC{{\mathcal C}}
\def\XX{{\mathcal X}}

\newcommand{\wto}{\rightharpoonup}

\newcommand{\Black}{\color{black}}

%\newcommand{\Red}{\color{black}}

% Comments

%Theorems
%-----------------------------------------------------------------
\newtheorem{thm}{Theorem}[section]
\newtheorem{cor}[thm]{Corollary}
\newtheorem{lem}[thm]{Lemma}

\newtheorem{hyp}{Hypothesis}

\theoremstyle{definition}

\theoremstyle{remark}
\newtheorem{rem}[thm]{Remark}
\theoremstyle{example}

% Title, author, date
%-----------------------------------------------------------------

% If set, these will be the internal title and author of the PDF (and
% will be listed for example in ereaders and tablets).

\def\thetitle{Harris-type results on geometric and subgeometric
  convergence to equilibrium for stochastic semigroups}

\def\theauthor{José A. Cañizo \& Stéphane Mischler}

\hypersetup{pdftitle={\thetitle}}
\hypersetup{pdfauthor={\theauthor}}

% Paper title and author
\title{\thetitle}
\author{\theauthor}

% Date is set automatically unless specified.
\date{October 2021}

% Addresses. The "article" class does not have a standard way to
% include them. This imitates the behaviour of "amsart" by inserting
% them at the end of the paper.
%-----------------------------------------------------------------

\AtEndDocument{\bigskip\bigskip{\footnotesize
  \noindent
  \textsc{José A. Cañizo. IMAG \& Departamento de
    Matemática Aplicada, Universidad de Granada, Avenida de
    Fuentenueva S/N, 18071 Granada,
    Spain.}
  \\
  \textit{Email address}: \texttt{\href{mailto:canizo@ugr.es}{canizo@ugr.es}}
  \par
  \addvspace{\medskipamount}
  \noindent
  \textsc{Stéphane Mischler. CEREMADE, UMR CNRS 7534. Université Paris-Dauphine, Place du Maréchal de Lattre de
    Tassigny 75775 Paris Cedex 16, France.}
  \\
  \textit{Email address}: \texttt{\href{mailto:mischler@ceremade.dauphine.fr}{mischler@ceremade.dauphine.fr}}
}}

% =========================================================================

\begin{document}

\maketitle

\begin{abstract}
  We provide simple and constructive proofs of Harris-type theorems on
  the existence and uniqueness of an equilibrium and the speed of
  equilibration of discrete-time and continuous-time stochastic
  semigroups.  Our results apply both to cases where the relaxation
  speed is exponential (also called geometric) and to those with no
  spectral gap, with non-exponential speeds (also called
  subgeometric). We give constructive estimates in the subgeometric
  case and discrete-time statements which seem both to be new. The
  method of proof also differs from previous works, based on semigroup
  and interpolation arguments, valid for both geometric and
  subgeometric cases with essentially the same ideas. In particular,
  we present very simple new proofs of the geometric case.
\end{abstract}

\tableofcontents

\newpage
\section{Introduction}
\label{sec:intro}

\paragraph{Aim of the work}
 
The study of convergence to equilibrium of continuous or discrete
semigroups which preserve mass and positivity is central in the theory
of Markov processes and partial differential equations (PDEs).
Several results ensuring geometric (i.e.~exponential) or subgeometric
(for instance, polynomial) convergence to equilibrium in weighted
total variation norms for a broad family of processes are known as
\emph{Harris-type theorems} (or also sometimes as
\emph{Meyn-Tweedie-type theorems}).  They have been widely developed
during the last three decades and have seen a broad range of
applications to probability and PDEs problems.

Harris-type theorems concern the trajectories of this kind of
semigroups. In both the geometric and subgeometric cases, they
establish the existence of an equilibrium (often called stationary
state or invariant measure depending on the context) and a speed of
convergence of trajectories to it. The main assumptions of this type
of theorems are
\begin{equation*}
  \text{(i) a  \emph{strong positivity}, \emph{irreducibility} or \emph{coupling} condition,} 
\end{equation*}
as well as 
\begin{equation*}
  \text{(ii) a \emph{confinement} or \emph{Foster-Lyapunov} condition.}
\end{equation*}
The latter condition determines whether the speed of convergence is
geometric or subgeometric.

\smallskip

Our purpose in this paper is to establish some theorems of this type in
both the geometric and the subgeometric situations using elementary
semigroup tools, and avoiding some usual probabilistic arguments such
as estimates of the return time of a process to a given set.

Let us describe our results a bit more precisely by considering the
typical case of a discrete stochastic process associated to an
operator $S$, which must be positive and mass-preserving, defined on a
space of measures with a weighted total variation norm. Under both
conditions (i) and (ii), we will be able to exhibit two convenient
norms $\NormT \cdot \NormT$, $ \| \cdot \|_*$ and a scalar
$\alpha > 0$ such that \beqn\label{intro:estimfonda} \NormT S \nu
\NormT \le \NormT \nu \NormT - \alpha \| \nu \|_*, \eeqn for any
measure $\nu$ with vanishing mass.  The strict contraction estimate
\eqref{intro:estimfonda}, or a variant of it, is then used in order to
prove the existence of a positive equilibrium $\mu^*$ with unit mass
associated to the operator $S$ such that $ \| \mu^* \|_* < \infty$.
The same estimate can also used in order to prove the uniqueness of
this equilibrium under slightly stronger assumptions, for example that
$S$ is of Feller-type.  When $ \| \cdot \|_*$ is equivalent to
$\NormT \cdot \NormT$, which holds true when the confinement condition
(ii) is strong enough, then one easily deduces from
\eqref{intro:estimfonda} a geometric convergence of the sequence
$(S^n \nu)$ to $0$ for any $\nu$ with vanishing mass. Under weaker
confinement conditions, the norm $ \| \cdot \|_*$ is strictly
dominated by $\NormT \cdot \NormT$ and only a subgeometric convergence
of the sequence $(S^n \nu)$ to $0$ is established.  A version of these
ideas for continuous semigroups will be also deduced from the
analysis of the discrete case.

\smallskip

Our approach is inspired by the proof of Harris' result in the
geometric case by \citet{HM2008}, which uses mass transportation
metrics. Our proof is a simplification of these ideas which avoids the
use of mass transportation arguments, and can be adapted to the
subgeometric case as well.  Our result gives an alternative proof to
the geometric decay estimate of \citet{HM2008} and can be adapted to
give subgeometric decay rates for discrete semigroups under weaker
confinement conditions. In the continuous-time case we can recover
similar subgeometric decay rates as in
\citet*{GDF2009,Hairer2016notes}. We emphasize that the statements
apply to mass-preserving semigroups and give explicit versions for
them, since this is a common setting in PDE.

\paragraph{Previous contributions}

The ergodicity and stability theory of Markov processes has been
widely developed since the pioneering works of \cite{Doblin} and
\citet{Harris1956}, the last one giving name to these results, though
it considers only the existence of equilibrium and does not mention
speed of convergence towards
it.
A good exposition of this type of results is given in
\citet{MeynTweedie}, and a nice introduction in the setting of Markov
chains can be found in \citet[Chapter 2]{Stroock2005}.

An
important development of the theory is due to
\cite{MeynTweedieI,MeynTweedieII,MeynTweedieIII,10.2307/2245077}.  A
simplified statement and a proof using mass transportation distances
was given by \citet{HM2008}, motivated by the application to
stochastic PDEs \citep{HM2008-NS}.  Recent related results for
non-conservative semigroups have been reported by \citet*{BCG2017},
and applications to models for the electrical activity of groups of
neurons can be found in \citet{DG2017, CY2018}.  Recent works dealing
with applications to Fokker-Planck equations and related models are
due to \citet*{MR3892316,MR3939573, MR4069622} and \citet{MR4049394}. We
also mention applications to the study of hypocoercivity for kinetic
equations and fragmentation-type equations \citet{CCEY2020, CGY2020}.

On the other hand, this type of theorems has been extended in several
works to the case of semigroups with no spectral gap, for which the
speed of convergence to equilibrium is subgeometric (slower than
exponential). Probabilistic results of this kind can be found in
\citet*{TT94, DFMS04}. We highlight \citet*{GDF2009}, where a result
for the continuous-time case was given, and serves as a model for the
type of results we wish to obtain in the present paper. An exposition
of this same result which also uses probabilistic arguments can be
found in unpublished notes by \citet{Hairer2016notes}, see also \cite{BernouSem}. Subgeometric
convergence rates have also been studied for classical models as the
Fokker-Planck equation, and the Boltzmann equation and its relatives;
for this we refer to \cite{KMN2015, MR3625186} and the references
therein, and the classical papers by \cite{MR576265,MR575897}. We also
mention the recent works by \cite{BernouFournier,Bernou2019}, where
convergence to equilibrium for a collisionless model of a gas is
investigated, the last one using techniques related to the present paper.

\paragraph{Definitions and notation}

We fix a measurable space $(\Omega, \mathcal{E})$ throughout. We
denote by $\M$ the set of finite signed measures on $\Omega$, and by
$\P$ the set of probability measures on $\Omega$. We also call $\cN$
the linear subspace of $\M$ consisting of zero mean measures (that is,
$\nu \in \cN$ if $\nu \in \M$ and $\nu(\Omega) = 0$).

We usually denote by $\int f \mu$ the integral
of a function $f$ with respect to a measure $\mu \in \M$, omitting the
domain of integration $\Omega$, and preferring this notation to the
also common $\int f \d \mu$. The positive and negative parts of a measure $\mu \in \M$ (with the
usual Hahn-Jordan decomposition) are denoted respectively by $\mu_+$,
$\mu_-$, so that $\mu = \mu_+ - \mu_-$ and $|\mu| := \mu_+ + \mu_-$. 
The total variation norm of a measure
$\mu \in \M$ is denoted by $\|\mu\| := \int |\mu|.$

A \emph{stochastic operator}
is a linear operator $S \: \M \to \M$ which leaves $\P$ invariant
(that is, a linear operator which preserves mass and positivity).  A
\emph{stochastic semigroup} is a family $(S_t)_{t \in [0,+\infty)}$ of
stochastic operators $S_t \: \M \to \M$ such that $S_0 = I$ and
$S_t \circ S_s = S_{t+s}$ for all $s, t \geq 0$. It is worth
emphasizing that we do not impose here any continuity assumption on the
trajectory $t \mapsto S_t \mu$ for a given $\mu \in \M$, so this
definition of stochastic semigroup is quite weak. Our results on the
Harris theorem for geometric decay require no further regularity
conditions on the semigroup; see Section \ref{sec:Harris}.

These objects are dual to the more classical definition of
Markov-Feller operators and semigroups. Whenever we need to consider
Markov-Feller semigroups we will always assume that $\Omega$ is a
locally compact and separable metric space, and call $C_0(\Omega)$ the
space of continuous functions which converge to $0$ at infinity (the
completion in the supremum norm of $C_\mathrm{c}(\Omega)$, the set of
continuous compactly supported functions on $\Omega$). The space
$C_0(\Omega)$ is a Banach space when endowed with the supremum norm,
and its dual is $\M$ (with the total variation norm) by the
Markov-Riesz representation theorem\footnote{For any locally compact
  Hausdorff topological space $\Omega$, the dual of $C_0(\Omega)$ is
  the set of finite Radon measures on $\Omega$; in our setting in
  which $\Omega$ is additionally a separable metric space, the set of
  Radon measures is just the set of finite measures---see for example
  \citet[Theorem 7.8]{Folland1999}.}. In this setting, a
\emph{Markov-Feller operator} $P$ is a linear and continuous operator
on $C_0(\Omega)$ which is positive ($P \varphi \ge 0$ if
$\varphi \ge 0$) and preserves constants ($P \varphi_n \nearrow 1$ if
$\varphi_n \nearrow 1$, with convergence understood in a pointwise
sense). A \emph{Markov-Feller semigroup} $(P_t)_{t \ge 0}$ is a
strongly continuous semigroup of {Markov-Feller} operators on
$C_0(\Omega)$.  If $P$ is a Markov-Feller operator then its dual
$S := P^*$ is a stochastic operator, and in that case we will say that
$S$ is \emph{of Feller type}. Similarly, if $(P_t)_{t \geq 0}$ is a
Markov-Feller semigroup then the semigroup $(S_t)_{t \geq 0}$ defined
by $S_t := P^*_t$ is a stochastic semigroup. In that case, we say that
$(S_t)_{t \geq 0}$ is of Feller type and we denote by ${L}$ the
generator of $(P_t)$ in the sense of semigroups.
We note that for a Feller type stochastic semigroup $(S_t)_{t \geq 0}$
the trajectory $t \mapsto S_t \mu$ is weakly-$*$ continuous for any
given $\mu \in \M$ (that is, the trajectory is continuous in the
weak-$*$ topology of $\M$, viewed as the dual of $C_0(\Omega)$), but
is does not need to be continuous in the total variation norm.  Let us
emphasize
that definitions of ``Feller'' for an operator or a semigroup vary
slightly in the literature; in some references a Markov-Feller
operator is defined as an operator on $C_b(\Omega)$, the set of
continuous and bounded functions
\citep[Definition~1.9]{Hairer2016notes}, defined through an integral
formula involving a transition kernel. We will not consider this
latter case here, but rather we will use the concept of Feller-type
stochastic operators and semigroups defined by duality from
$C_0(\Omega)$, for which some simplifications occur. However, we
emphasize that many of the results we state work with the minimal
assumption of a stochastic operator or semigroup.

\smallskip 

For a measurable (weight) function
$V \: \Omega \to [1,+\infty)$, we denote by $\M_V$ the subspace of
finite signed measures $\mu$ on $\Omega$ such that
$$
\| \mu \|_V := \int_\Omega V |\mu| < \infty,
$$
and write $\P_V := \M_V \cap \P$ for the set of probability measures
for which $\| \mu \|_V < +\infty$, and similarly
$\cN_V := \M_V \cap \cN$ is the set zero-mean measures with
$\| \mu \|_V < +\infty$.  We say that $S$ is a \emph{stochastic
  operator on $\M_V$} if it is a stochastic operator on $\M$, one can
restrict $S \: \M_V \to \M_V$, and this restriction is bounded in the
$\|\cdot\|_V$ norm.  Similarly, we say that $(S_t)_{t \geq 0}$ is a
\emph{stochastic semigroup on $\M_V$} if it is a stochastic semigroup
on $\M$ and satisfies a growth estimate
\begin{equation} \label{eq:V-semigroup-growth}
  \|S_t \mu \|_V \leq C_V e^{\omega_V t} \|\mu\|_V,
\end{equation}
for all $\mu \in \M_V$ and all $t \ge 0$, and for some constants
$C_V \ge 1$, $\omega_V \ge 0$.

\paragraph{Plan of the paper}

The paper is organized as follows.  We first prove in Section
\ref{sec:Doeblin} a simple statement, sometimes known as Doeblin's
theorem. The statement and proof of the geometric version of Harris'
theorem is next given in Section \ref{sec:Harris}.
Sections~\ref{sec:subgeometric-discrete} and
\ref{sec:time-continuous} are then devoted to our versions of
Harris' theorem is the case of subgeometric operators and semigroups
respectively. In the final section~\ref{sec:equilibrium}, the proof of
the existence of an  equilibrium (but not its uniqueness nor its
stability) is established only assuming a Lyapunov condition (but
without any irreducibility assumption).

\section{Doeblin's theorem}
\label{sec:Doeblin}
 
  In this section, we present a basic and well-known  result in the theory of Markov
processes sometimes known as \emph{Doeblin's theorem}, which is a
particular case of the Harris theorem presented in the next
section. We include it since the proof is extremely simple and
contains ideas that are used in later proofs. The argument is widely
known, and we were made aware of it through \citet{Gabriel}.

It is well known that stochastic operators are non-expansive mappings
(or contractions in the non-strict sense) in the measure space $\M$,
namely
 \begin{equation}
   \label{eq:Doeblin-contrac1}
   \| S\mu  \|   \leq \|\mu \|, 
 \end{equation}
 for all measures $\mu \in \M$. The proof of this fact is simple and
 instructive. We introduce the Hahn-Jordan decomposition
 $\mu = \mu_+ - \mu_-$, $0 \le \mu_\pm \in \M$, which ensures $|\mu| =
 \mu_+ + \mu_-$, and we write
 $$
 |S\mu| = |S\mu_+ - S\mu_- | \le S\mu_+ +  S\mu_- = S|\mu|,
 $$
 by using the linearity and the positivity of $S$. We then immediately
 deduce \eqref{eq:Doeblin-contrac1} by integrating the last inequality and by using that $S$ is mass preserving.

 \smallskip Doeblin's Theorem states that under some very strong
 positivity or irreducibility condition the above non-expansive
 property becomes a (strict) contraction property on the set $\cN$ of
 zero mean measures:

\begin{thm}[Doeblin's theorem]
  \label{thm:Doeblin}
  Let $S \: \M \to \M$ be a stochastic operator satisfying that there exist
  $0 < \alpha < 1$ and $\eta \in \P$ such that 
  \begin{equation}
    \label{eq:Doeblin}
  S \mu \geq \alpha \eta, 
    \qquad
    \text{for all $\mu \in \P$.}
  \end{equation}
  Then $S$ has a unique stationary state $\mu^* \in \P$ which is
  exponentially stable, and more generally
  \begin{equation}
    \label{eq:Doeblin-contracn}
    \| S^n\nu \|
    \leq
\gamma^n \|\nu\| \qquad
    \text{for all $\nu \in \cN$ and $n \in \N$,}
  \end{equation}
  with $\gamma := 1-\alpha \in (0,1)$.
  \end{thm}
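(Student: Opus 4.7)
The plan is to first establish the single-step contraction on the subspace $\cN$ of zero-mean measures, namely
\[
\|S\nu\| \le (1-\alpha) \|\nu\| \qquad \text{for all } \nu \in \cN,
\]
and then deduce both the iterated bound \eqref{eq:Doeblin-contracn} and the existence/uniqueness of the equilibrium as essentially free consequences.

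For the contraction step, I would use the Hahn–Jordan decomposition $\nu = \nu_+ - \nu_-$. Since $\nu \in \cN$, we have $\nu_+(\Omega) = \nu_-(\Omega) =: a$ and $\|\nu\| = 2a$. If $a = 0$ there is nothing to prove, so assume $a > 0$; then $\nu_+/a$ and $\nu_-/a$ belong to $\P$, so Doeblin's hypothesis \eqref{eq:Doeblin} gives
\[
S\nu_+ \ge a\alpha\,\eta \qquad \text{and} \qquad S\nu_- \ge a\alpha\,\eta.
\]
Writing
\[
S\nu = (S\nu_+ - a\alpha\,\eta) - (S\nu_- - a\alpha\,\eta),
\]
both bracketed measures are nonnegative, so by the triangle inequality and the fact that $S$ is mass-preserving,
\[
\|S\nu\| \le \|S\nu_+ - a\alpha\,\eta\| + \|S\nu_- - a\alpha\,\eta\| = 2(a - a\alpha) = (1-\alpha)\|\nu\|.
\]
Iterating this inequality on $\cN$ (which is invariant under $S$, since $S$ preserves mass) immediately yields \eqref{eq:Doeblin-contracn} with $\gamma = 1-\alpha$.

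For the existence of an equilibrium, I would pick any $\mu_0 \in \P$ and show that $(S^n \mu_0)_{n \ge 0}$ is Cauchy in total variation. Indeed $\nu_0 := S\mu_0 - \mu_0 \in \cN$, and for $m \ge n$,
\[
\|S^m \mu_0 - S^n \mu_0\| = \Big\| \sum_{k=n}^{m-1} S^k \nu_0 \Big\| \le \sum_{k=n}^{m-1} (1-\alpha)^k \|\nu_0\|,
\]
a geometric tail that tends to $0$. Since $\M$ is complete and $\P$ is closed in the total variation norm, the limit $\mu^* \in \P$ exists; continuity of $S$ in total variation (which is immediate from \eqref{eq:Doeblin-contrac1}) gives $S\mu^* = \mu^*$. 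For uniqueness and stability, if $\tilde \mu \in \P$ is another stationary state then $\mu^* - \tilde\mu \in \cN$ and \eqref{eq:Doeblin-contracn} forces $\|\mu^* - \tilde\mu\| \le (1-\alpha)^n \|\mu^* - \tilde\mu\|$ for all $n$, hence $\tilde\mu = \mu^*$; applied to an arbitrary $\mu \in \P$ with $\nu = \mu - \mu^* \in \cN$, the same bound gives $\|S^n \mu - \mu^*\| \le (1-\alpha)^n \|\mu - \mu^*\|$, the exponential stability.

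No step looks like a serious obstacle: the only minor care needed is to treat $\nu = 0$ separately in the contraction argument (to normalize $\nu_\pm$) and to verify that the limit candidate $\mu^*$ is genuinely a fixed point, which is why the non-expansive property \eqref{eq:Doeblin-contrac1} is invoked to ensure continuity of $S$ in total variation. The whole argument is essentially a direct application of Banach's fixed-point theorem once the one-step contraction on $\cN$ is in hand.
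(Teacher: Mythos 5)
Your proof is correct and follows essentially the same route as the paper: the same Hahn--Jordan decomposition with the Doeblin bound $S\nu_\pm \ge \alpha a\,\eta$ (the paper's $r = \alpha\|\nu\|/2$ is exactly your $a\alpha$), the same one-step contraction on $\cN$, and the same Cauchy-sequence/telescoping argument for existence and uniqueness of $\mu^*$. No gaps.
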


It is worth emphasizing that for any $\mu \in \P$, we deduce from \eqref{eq:Doeblin-contracn} that 
   \begin{equation*}
     \|S^n \mu - \mu^* \|
     \leq
   \gamma^n \|\mu - \mu^* \|, 
     \qquad
     \text{for all   $n \in \N$,}
 \end{equation*}
 and thus the exponential asymptotic stability of the equilibrium
 $\mu^*$. 
  
\begin{proof}[Proof of Theorem \ref{thm:Doeblin}]
The proof is based on an improvement of \eqref{eq:Doeblin-contrac1} which writes 
 \begin{equation}
    \label{eq:Doeblin-contrac}
    \| S \nu \|
    \leq
    \gamma \|\nu\| \qquad
    \text{for all $\nu \in \cN$,}
  \end{equation}
  with $\gamma := 1-\alpha$.  In order to prove
  \eqref{eq:Doeblin-contrac}, we observe that because of the 
    Doeblin condition \eqref{eq:Doeblin} applied to
  $S \bigl( { \nu_\pm / \| \nu_\pm\|} \bigr)$ and the fact that the
  integrals of $\nu_+$ and $\nu_-$ are equal for $\nu \in \cN$, it
  holds
  \begin{equation*}
    S\nu_\pm \ge   \alpha \eta  
    \int \nu_{\pm}= r \, \eta, \quad r := \alpha \| \nu \|/2.
  \end{equation*}
  Similarly as in the proof  of \eqref{eq:Doeblin-contrac1}, we may deduce
  \bean |S \nu | &=& |S \nu_+ - r\eta - S\nu_- + r \eta|
  \\
  &\leq& |S \nu_+ - r\eta| + |S\nu_- - r \eta|
  \\
  &=& S \nu_+ - r\eta + S\nu_- - r \eta \ = \ S |\nu| - 2r\eta, \eean
  and integrating this, we get 
  \begin{equation*}
    \| S \nu \| \leq
    \| S |\nu|  \| - 2 r  \|  \eta \| = \| \nu  \| - 2 r
    = (1-\alpha) \| \nu \|. 
  \end{equation*}
  That is exactly inequality \eqref{eq:Doeblin-contrac}, from which
  \eqref{eq:Doeblin-contracn} immediately follows.
  
  \smallskip In order to prove the existence and uniqueness of an
  equilibrium,
  we fix $\mu_0 \in \P$, and we define recursively
  $\mu_k := S \mu_{k-1}$ for any $k \ge 1$.  Thanks to
  \eqref{eq:Doeblin-contrac}, we get
  $$
  \sum_{k=1}^\infty \| \mu_k - \mu_{k-1} \| \le \sum_{k=0}^\infty \gamma^k \| \mu_1- \mu_0 \| < \infty, 
  $$
  so that $(\mu_k)$ is a Cauchy sequence in $\P$. We set
  $\mu^*:= \lim \mu_k \in \P$ which is a stationary state, as seen by
  passing to the limit in the equation $\mu_k = S \mu_{k-1}$, and
  which is unique in $\P$ thanks to
  \eqref{eq:Doeblin-contrac}.
\end{proof}

% =========================================================================

\section{Harris's theorem}
\label{sec:Harris}

We extend Doeblin's results presented in the previous
section to the case when only a weaker version of Doeblin's
positivity condition \eqref{eq:Doeblin} holds, together with a
\emph{Lyapunov condition}.  An important motivation is that the
Doeblin condition \eqref{eq:Doeblin} is indeed too restrictive and
somehow limited to compact spaces.  As a matter of fact, when
$\Omega = \R^d$ for instance and $S$ is a stochastic operator of
Feller type, there exists a sequence $(\mu_{0n})$ in $\P$ such that
$\mu_{0n} \wto 0$ (in the weak-$*$ sense of measures; take for example
$\mu_{0n} := \delta_{x_n}$ with $|x_n| \to +\infty$). Since $S$ is
continuous in the weak-$*$ topology due to $S$ being of Feller type,
also $S \mu_{n0} \wto 0$ and the Doeblin condition
\eqref{eq:Doeblin} cannot hold.  However,
the condition \eqref{eq:Doeblin} may still hold if the semigroup is
not of Feller type in our sense; an example on $\Omega = [0,+\infty)$
is the renewal equation found in \cite{Gabriel}. For many
applications, one must thus weaken the positivity condition
\eqref{eq:Doeblin}. This forces us to add a localization or
confinement condition and work in a weighted space.
 
\medskip The following assumptions will be used in Harris's theorem
below.  In all of this section, $V \: \Omega \to [1,+\infty)$ denotes
a measurable function, that we will call in the sequel a {\it Lyapunov
  or  weight function.}

\begin{hyp}[Operator Lyapunov condition]
  \label{hyp:op_Lyapunov}
  An operator $S$ satisfies an {\it operator Lyapunov condition} with
  Lyapunov function $V$ if there exist $0 < \gamma_L < 1$ and
  $K \geq 0$ such that
  \begin{equation}
    \label{eq:Lyapunov}
    \|S \mu \|_V \leq \gamma_L \|\mu\|_V + K \|\mu\|,
    \qquad
    \text{for   $\mu \in \mathcal{M}_V$.}
  \end{equation}
\end{hyp}

\begin{hyp}[Harris condition]
  \label{hyp:Harris}
  An operator $S$ satisfies a {\it Harris condition} on a set
  $\CC \subseteq \Omega$ if there exist $0 < \alpha < 1$ and
  $\eta \in \P$ such that
  \begin{equation}
    \label{eq:Doeblin-Harris}
    S \mu \geq \alpha \eta \int_\CC \mu, 
    \qquad
    \text{for all $0 \le \mu \in \mathcal{M}$.}
  \end{equation}
\end{hyp}

In other words, Hypothesis \ref{hyp:Harris} states that the
Doeblin condition \eqref{eq:Doeblin} holds, but only for mesures $\mu$
supported on the set $\CC$.

\begin{hyp}[Local coupling condition]
  \label{hyp:local_coupling}
  An operator $S$ satisfies a {\it local coupling condition} with
  Lyapunov function $V$ if there exist $0 < \gamma_H < 1$ and $A> 0$
  such that
  \begin{equation} \label{eq:coupling} \Bigl( \nu \in
    \cN_V,
    \ \| \nu \|_V \le A \| \nu \| \Bigr) \quad \text{implies} \quad
    \| S \nu \| \leq \gamma_H \| \nu \| .
  \end{equation}
\end{hyp}
The term {\it local coupling condition} comes from the fact that it
implies that (and is in fact equivalent to)
$$
\Bigl(  x,y \in \Omega,   \  V(x) + V(y) \le A  \Bigr) \quad \text{implies} \quad
\|  S (\delta_x - \delta_y) \| \leq 2 \gamma_H,
$$
so that the distance between $S\delta_x$ and $S\delta_y$ is strictly
less than the distance between $\delta_x$ and $\delta_y$ under a
localisation condition on $x$ and $y$.  
  The following lemma shows that, roughly speaking, 
the Harris hypothesis \ref{hyp:Harris} implies the Local coupling Hypothesis  \ref{hyp:local_coupling}.

\begin{lem}[Harris implies local coupling]
  \label{lem:Harris&coupling}
  If $S$ satisfies the {\it Harris condition} (Hypothesis
  \ref{hyp:Harris}) on the set
  $\CC = \{ x \in \Omega \mid V(x) \leq R \}$ for some $R> 0$ and
  $0 < \alpha < 1$ then it satisfies the {\it local coupling
    condition} (Hypothesis \ref{hyp:local_coupling}) with any
  $A \in (0, R/2)$ and $\gamma_H := 1 - \alpha (1-2A/R) \in (0,1)$.
\end{lem}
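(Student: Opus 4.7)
The plan is to mimic the subtraction trick used in the proof of Doeblin's Theorem, but to apply the Harris lower bound only to the portions of $\nu_+$ and $\nu_-$ which live on the sublevel set $\CC = \{V \le R\}$. The localisation condition $\| \nu \|_V \le A \| \nu \|$ will be exactly what is needed to ensure that these portions carry a definite fraction of the total mass.

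First I would fix $\nu \in \cN_V$ with $\|\nu\|_V \le A\|\nu\|$, write the Hahn--Jordan decomposition $\nu = \nu_+ - \nu_-$ and note that, since $\nu$ has zero mean, $\|\nu_+\| = \|\nu_-\| = \|\nu\|/2$. The key quantitative step is to bound the mass of $\nu_\pm$ that escapes $\CC$ by a Chebyshev/Markov-type inequality using $V \ge 1$ and $V > R$ on $\CC^c$:
\begin{equation*}
  \int_{\CC^c} \nu_\pm \le \frac{1}{R}\int_{\CC^c} V\,\nu_\pm \le \frac{1}{R}\|\nu_\pm\|_V \le \frac{1}{R}\|\nu\|_V \le \frac{A}{R}\|\nu\|.
\end{equation*}
Subtracting from $\|\nu_\pm\| = \|\nu\|/2$ yields the lower bound
\begin{equation*}
  \int_\CC \nu_\pm \ge \frac{\|\nu\|}{2} - \frac{A\|\nu\|}{R} = \frac{\|\nu\|}{2}\Bigl(1 - \frac{2A}{R}\Bigr),
\end{equation*}
which is strictly positive thanks to $A < R/2$.

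Next I would plug this into the Harris condition \eqref{eq:Doeblin-Harris} applied to $\nu_\pm \ge 0$ to obtain $S\nu_\pm \ge r\,\eta$ with $r := \tfrac{\alpha}{2}(1-2A/R)\|\nu\|$. Then, repeating verbatim the argument used in Theorem~\ref{thm:Doeblin},
\begin{equation*}
  |S\nu| = |(S\nu_+ - r\eta) - (S\nu_- - r\eta)| \le (S\nu_+ - r\eta) + (S\nu_- - r\eta) = S|\nu| - 2r\eta,
\end{equation*}
since both $S\nu_\pm - r\eta$ are nonnegative. Integrating and using that $S$ preserves mass on $|\nu|$ and that $\eta \in \P$ gives
\begin{equation*}
  \|S\nu\| \le \|\nu\| - 2r = \Bigl(1 - \alpha\Bigl(1 - \frac{2A}{R}\Bigr)\Bigr)\|\nu\| = \gamma_H \|\nu\|,
\end{equation*}
which is precisely \eqref{eq:coupling}. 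The constraint $A \in (0,R/2)$ ensures $\gamma_H \in (0,1)$.

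There is really no serious obstacle here: the only delicate point is to realize that the Harris lower bound must be applied separately to $\nu_+$ and $\nu_-$, and that the localisation hypothesis must be transferred to each of them; the bound $\|\nu_\pm\|_V \le \|\nu\|_V$ (which holds because $\nu_+$ and $\nu_-$ have disjoint supports) does this for free. Once this is noted, the computation follows the Doeblin template without change.
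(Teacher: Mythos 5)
Your proof is correct and follows essentially the same route as the paper's: a Chebyshev-type bound using the localisation hypothesis to show that $\nu_\pm$ keep a fraction $1-2A/R$ of their mass on $\CC$, then the Harris lower bound applied to $\nu_\pm$ and the subtraction trick from Doeblin's theorem. The constants match exactly ($r=\tfrac{\alpha}{2}(1-2A/R)\|\nu\|$), so there is nothing to add.
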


\begin{proof}[Proof of Lemma~\ref{lem:Harris&coupling}]
  Under the Harris condition \eqref{eq:Doeblin-Harris} and when $\nu$
  satisfies the LHS hypotheses of condition \eqref{eq:coupling}, a
  sizeable part of the mass of $\nu_+$ and $\nu_-$ is in
  $\mathcal{C} := \{x \in \Omega \mid V(x) \leq R \}$, as can be seen
  from the bound
  \begin{equation*}
    \int_{\Omega \setminus \mathcal{C}} \nu_{\pm}
    \leq \frac{1}{R} \int V |\nu|
    \leq
    \frac{A}{R} \int |\nu|
    =
    \frac{2 A}{R} \int \nu_{\pm},
  \end{equation*}
  where we have used in a fundamental way that the masses of $\nu_+$ and
  $\nu_-$ are equal in the last line. That implies
  \begin{equation*}
    \int_{\mathcal{C}} \nu_{\pm} \geq
    \left(
      1 - \frac{2  A}{R}
    \right)
    \int \nu_{\pm}.
  \end{equation*}
  Because of the {\it Harris condition} \eqref{eq:Doeblin-Harris} and
  the fact that the mass of $\nu_+$ and $\nu_-$ are equal, it
  holds
  \begin{equation*}
    S\nu_\pm \ge   \alpha \eta  
    \left(
      1 - \frac{2  A}{R}
    \right)
    \int \nu_{\pm} =: r \, \eta,
  \end{equation*}
  with
  \begin{equation*}
    r := \alpha  \left( 1 - \frac{2  A}{R} \right) \int \nu_{\pm}
    = \frac{1-\gamma_H}{2} \| \nu \|.
  \end{equation*}
  Repeating the proof of Theorem \ref{thm:Doeblin}, it holds then
   \begin{equation*}
    \| S \nu \| \leq  \| \nu  \| - 2 r = \gamma_H \| \nu \|.
   \end{equation*}
  That is exactly inequality \eqref{eq:coupling}
  with $\gamma_H := 1-\alpha(1-2A/R)$.
\end{proof}

\begin{thm}[Harris's Theorem]
  \label{thm:Harris}
  Consider $S \: \M_V \to \M_V$ a stochastic operator which satisfies
  the operator Lyapunov condition (Hypothesis \ref{hyp:op_Lyapunov})
  and the local coupling condition (Hypothesis
  \ref{hyp:local_coupling}) with $K/A < 1-\gamma_L$, both with the
  same weight function $V$.  Then $S$ has a unique stationary state
  $\mu^* \in \P_V$ which is exponentially stable.  More generally,
  there exist $\gamma \in (0,1)$ and $C \in [1,\infty)$ such that
  \begin{equation}
    \label{eq:Harris-exponential-convergence}
    \| S^n\nu  \|_{V}
    \leq
    C \,  \gamma^n \|  \nu  \|_{V}, \qquad
    \text{for all $\nu \in \cN_V$ and $n \in \N$.}
  \end{equation}
\end{thm}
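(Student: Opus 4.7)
The plan is to introduce a one-parameter family of equivalent norms on $\M_V$ of the form
\[
\Nt \nu \Nt := \| \nu \| + \beta \|\nu\|_V, \qquad \beta > 0 \text{ to be chosen,}
\]
and to show that $S$ becomes a strict contraction in $\Nt \cdot \Nt$ on the subspace $\cN_V$, in the spirit of \citet{HM2008}. Since $V \ge 1$ one has $\beta \|\nu\|_V \le \Nt \nu \Nt \le (1+\beta)\|\nu\|_V$, so $\Nt \cdot \Nt$ is equivalent to $\|\cdot\|_V$, and any contraction estimate $\Nt S \nu \Nt \le \gamma \Nt \nu \Nt$ on $\cN_V$ will iterate directly to \eqref{eq:Harris-exponential-convergence} with $C := (1+\beta)/\beta$.

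To prove the contraction, I would split $\nu \in \cN_V$ into two cases according to whether $\|\nu\|_V \le A\|\nu\|$ (the ``confined'' regime) or $\|\nu\|_V > A\|\nu\|$ (the ``non-confined'' regime). In the confined case, Hypothesis \ref{hyp:local_coupling} yields $\|S\nu\| \le \gamma_H \|\nu\|$ and Hypothesis \ref{hyp:op_Lyapunov} gives $\|S\nu\|_V \le \gamma_L \|\nu\|_V + K\|\nu\|$, so that
\[
\Nt S \nu \Nt \le (\gamma_H + \beta K)\|\nu\| + \beta \gamma_L \|\nu\|_V.
\]
In the non-confined case, local coupling is unusable and one can only rely on the non-expansiveness \eqref{eq:Doeblin-contrac1}; combined with Lyapunov, this gives
\[
\Nt S \nu \Nt \le (1 + \beta K)\|\nu\| + \beta \gamma_L \|\nu\|_V,
\]
and the extra information $\|\nu\|_V > A\|\nu\|$ is what will be used to absorb the unwanted $\|\nu\|$ and $K\|\nu\|$ contributions into the dominant $\beta\|\nu\|_V$ piece.

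Writing $x = \|\nu\|$, $y = \|\nu\|_V$, the desired inequality $\Nt S\nu\Nt \le \gamma\Nt\nu\Nt$ reduces in the confined case $y \le Ax$ to $\gamma_H + \beta(\gamma_L A + K) \le \gamma(1 + \beta A)$, and in the non-confined case $y > Ax$ (using $x < y/A$) to $1 - \gamma \le \beta \bigl[A(\gamma - \gamma_L) - K\bigr]$. The key algebraic observation is that the hypothesis $K/A < 1-\gamma_L$ is precisely what permits $A(\gamma - \gamma_L) - K > 0$ for some $\gamma < 1$, namely any $\gamma \in \bigl(\max(\gamma_H,\,\gamma_L + K/A),\,1\bigr)$; once such $\gamma$ is fixed, taking $\beta$ large enough validates the non-confined inequality, and a quick rearrangement shows that the confined inequality is then automatically implied (it is equivalent to $-(\gamma - \gamma_H) \le \beta[A(\gamma - \gamma_L) - K]$, with nonpositive left-hand side and nonnegative right-hand side).

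Once the contraction on $\cN_V$ is in hand, the existence and uniqueness of $\mu^* \in \P_V$ follow exactly as in the proof of Theorem \ref{thm:Doeblin}: starting from any $\mu_0 \in \P_V$ (which is nonempty since any Dirac $\delta_{x_0}$ lies there, as $V(x_0) < \infty$), the increments $S^{n+1}\mu_0 - S^n \mu_0 = S^n(S\mu_0 - \mu_0)$ lie in $\cN_V$ and have $\Nt \cdot \Nt$-norms bounded by $\gamma^n \Nt S\mu_0 - \mu_0 \Nt$, making $(S^n\mu_0)$ Cauchy in $\|\cdot\|_V$; uniqueness is immediate by applying the contraction to $\mu^*_1 - \mu^*_2$. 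The main obstacle is the parameter juggling in the two-case contraction estimate, and the aesthetic point of the argument is to have the threshold $K/A < 1-\gamma_L$ emerge naturally from the non-confined case while the confined case requires no further tuning.
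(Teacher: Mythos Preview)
Your overall architecture --- the equivalent norm $\Nt\nu\Nt = \|\nu\| + \beta\|\nu\|_V$ and the two-case split according to whether $\|\nu\|_V \le A\|\nu\|$ --- is exactly what the paper does. The non-confined case and the existence/uniqueness arguments are fine. But the confined case has a genuine gap.

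You claim the confined inequality ``reduces to'' the condition at $y = Ax$. It does not. Writing $t := y/x$, the inequality $(\gamma_H + \beta K)x + \beta\gamma_L y \le \gamma(x + \beta y)$ becomes
\[
(\gamma_H + \beta K - \gamma) \le \beta(\gamma - \gamma_L)\,t,
\qquad t \in [1,A],
\]
since $V \ge 1$ forces $y \ge x$. With your choice $\gamma > \gamma_L$ the right-hand side is increasing in $t$, so the binding constraint is at $t = 1$, not $t = A$. The correct sufficient condition is therefore
\[
\gamma_H - \gamma \le \beta(\gamma - \gamma_L - K),
\]
and nothing guarantees $\gamma - \gamma_L - K \ge 0$ (only $\gamma - \gamma_L - K/A > 0$). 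When $\gamma_L + K > 1$ --- which is perfectly consistent with the hypothesis $K/A < 1-\gamma_L$ if $A$ is large --- the right-hand side is negative, and ``taking $\beta$ large enough'' makes the inequality \emph{fail}. Concretely, with $\gamma_H = \gamma_L = \tfrac12$, $K = 2$, $A = 10$ and $\gamma = 0.8$, the non-confined condition forces $\beta \ge 0.2$ while the (correct) confined condition forces $\beta \le 0.3/1.7 < 0.18$; no $\beta$ works for this $\gamma$.

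The repair is easy but reverses your tuning: either (i) follow the paper and bound the confined case \emph{coefficient-wise} without using $y \le Ax$ at all, giving $\gamma_1 = \max\{\gamma_H + \beta K,\, \gamma_L\}$ and hence requiring $\beta$ \emph{small}; the non-confined case then works for $\gamma$ close enough to $1$. Or (ii) keep your analysis but with the correct worst case $t = 1$, and observe that as $\gamma \nearrow 1$ the non-confined lower bound on $\beta$ tends to $0$ while the confined upper bound stays bounded away from $0$, so a common $\beta$ exists for $\gamma$ sufficiently close to $1$.
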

 
Due to Lemma \ref{lem:Harris&coupling}, the conclusion of Theorem
\ref{thm:Harris} applies also if $S$ satisfies the Lyapunov condition
\eqref{eq:Lyapunov} and the Harris condition \eqref{eq:Doeblin-Harris}
with $2 K / R \leq 1 - \gamma_L$.  With this result, we recover the
main result of \citet{HM2008} with a similar approach, except that we
work on the stochastic operator side rather than on the dual Markov
operator side.  In particular, and as in Doeblin's framework
  of Section~\ref{sec:Doeblin}, we deduce the exponential asymptotic
  stability of the equilibrium $\nu^*$ in $\M_V$, namely
   \begin{equation*}
     \|S^n \mu - \mu^* \|_V
     \leq
  C \gamma^n \|\mu - \mu^* \|_V, 
     \qquad
     \text{for all  $\mu \in \P_V$ and $n \in \N$.}
 \end{equation*}
 The theorem  doesn't  exclude the possibility  of other equilibria with infinite V-moment.

\begin{proof}[Proof of Theorem \ref{thm:Harris}]
  We introduce a new norm $\Nt \cdot \Nt_{V}$ on $\M_V$ defined by
 \begin{equation}\label{eq:newNormV}
\Nt  \mu  \Nt_{V}     := \|\mu\|+ \beta \| \mu \|_V, 
  \end{equation}
  for some $\beta > 0$ to be chosen later. Note that
  $\Nt \cdot \Nt_{V}$ and $\|\cdot\|_V$ are equivalent norms, with
\begin{equation*}
  (1 + \beta)^{-1}\Nt  \mu  \Nt_{V}   \leq \| \mu \|_V
  \leq \beta^{-1} \Nt  \mu  \Nt_{V} . 
\end{equation*}
We claim that there exist $\beta > 0$ small enough and $\gamma \in (0,1)$ such that 
  \begin{equation}
    \label{eq:Harris-contrac}
  \Nt  S \nu   \Nt_{V}    \leq  \gamma   \Nt  \nu  \Nt_{V},   \qquad
    \text{for all $\nu \in \cN_V$.}
  \end{equation}
Using  \eqref{eq:Harris-contrac}, we may then straightforwardly adapt the proof of Theorem~\ref{thm:Doeblin} in order to conclude to the
existence and uniqueness of  a stationary state  $\mu^* \in \P_V$  of $S$ and to the geometrical decay  
\eqref{eq:Harris-exponential-convergence} with $C := (1+\beta) / \beta$.

\smallskip
We may then focus on the proof of the contraction estimate \eqref{eq:Harris-contrac}. 
For that purpose, we take any $\nu \in \cN$ and estimate the norm $  \Nt  S \nu   \Nt_{V} $ in two alternative cases:

\Black

   \medskip\noindent \textbf{First case.} \emph{Contractivity for
    small $V$-moment.} When 
      \begin{equation}
    \label{eq:moment-cond-rev}
    \| \nu \|_V  < A  \| \nu \|, 
  \end{equation}
  the local coupling condition \eqref{eq:coupling} implies 
  \begin{align*}
    \|S \nu\| \le \gamma_H \|\nu\|.
   \end{align*}
   Together with the Lyapunov condition \eqref{eq:Lyapunov}, we  have
   \begin{align*}
  \Nt  S \nu   \Nt_{V}     
     &=
         \|S \nu\| + \beta       \|S \nu\|_V
     \\
     &\leq
            (\gamma_H+\beta K) \|\nu\|
            + \beta \gamma_L    \|S \nu\|_V
            \leq
            \gamma_1 \|\nu\|_\beta,
   \end{align*}
   with
   $$
   \gamma_1 := \max\{   \gamma_H + \beta K,  \gamma_L \}.
   $$
Choosing $\beta > 0$ small enough such that  $\beta K <  1 - \gamma_H$, we get $\gamma_1 < 1$ and that gives the contractivity property     \eqref{eq:Harris-contrac} in this
   case.

  \medskip\noindent \textbf{Second case.} \emph{Contractivity for large
    $V$-moment.} 
     Assume on the contrary that
   \begin{equation}
    \label{eq:moment-cond}
      \| \nu \|_V  \ge A  \| \nu \|.
  \end{equation}
  From \eqref{eq:Lyapunov} we deduce then
  \begin{equation*}
    \|S \nu \|_V \leq \gamma_L \|\nu\|_V + K \|\nu\|
    \leq
    (\gamma_L + K/A) \|\nu\|_V,
  \end{equation*}
  with $\gamma_L + K/A < 1$ by assumption. Together with
  \eqref{eq:Doeblin-contrac1}, we deduce
  \begin{align*}
    \Nt  S \nu   \Nt_{V}     
    &=
      \|S \nu\| + \beta       \|S \nu\|_V
    \\
    &\leq
      \| \nu\| + \beta    (\gamma_L + K/A)    \| \nu\|_V         
    \\
    &\leq
      (1-\beta \delta_0) \| \nu\| +  \beta    (\gamma_L + K/A + \delta_0)    \| \nu\|_V, 
  \end{align*}
  for any $\delta_0 \ge 0$, by using that $V \geq 1$ in the last
  inequality above.  We thus get
  \begin{align*}
    \Nt  S \nu   \Nt_{V}      \leq
    \gamma_2  \Nt  S \nu   \Nt_{V} ,
  \end{align*}
  with $\gamma_2 := \max(1-\beta \delta_0,\gamma_L + K/A + \delta_0)$.
  We get the contractivity property \eqref{eq:Harris-contrac} in this
  case by choosing $\delta_0 > 0$ small enough (and keeping the choice
  of $\beta > 0$ made in the previous case) so that
  $\gamma_2 \in (0,1)$.  The proof of \eqref{eq:Harris-contrac} is
  completed by setting $\gamma := \max \{ \gamma_1,\gamma_2 \}$.
 \end{proof}

\begin{rem}
  By following the above proof one can give an explicit expression of
  the constants. Because
   $$
   \gamma_L < \gamma_1 := 1 - {\beta \over 1 + \beta} (1 - \gamma_L - K/A),
   $$
   we have 
   $$
   \gamma = \max\bigl\{   \gamma_H + \beta K,  1 - {\beta \over 1 + \beta} (1 - \gamma_L - K/A) \bigr\}.
   $$
   We see then that the best choice of $\beta$ is the (uniquely
   defined) positive zero of the following second order polynomial
   equation
   $$
   K \beta^2 +   (K + b -a ) \beta - a = 0,
   $$
   with $ a := 1-\gamma_H > 0$, $b := 1-\gamma_L - K/A > 0$.  
 \end{rem}

 We end the section by presenting a different proof of Theorem
 \ref{thm:Harris}. The outcome is essentially the same, but we do not
 obtain as part of the argument the contractivity of a modified
 weighted total variation norm as in the previous proof. On the other
 hand, the result has an extremely short proof which makes the role of
 the assumptions very clear!

 \begin{proof}[Alternative proof of Theorem \ref{thm:Harris}]
   Given $\nu \in \cN_V$,
   we call
   \begin{equation*}
     v_n := \| S^n \nu \|_V,
     \qquad
     m_n :=  \| S^n \nu \|,
   \end{equation*}
   for integer $n \geq 0$. The Lyapunov condition
   \eqref{eq:Lyapunov} shows that
   \begin{equation}
     \label{eq:1}
     v_{n+1} \leq \gamma_L v_n + K m_n.
   \end{equation}
   The local coupling condition \eqref{eq:coupling} and the
   non-expansive mapping property \eqref{eq:Doeblin-contrac1} together
   imply
   \begin{equation*}
 \| S \nu \| \leq
     \begin{cases}
      \gamma_H \| \nu \| &\qquad \text{whenever} \quad \| \nu \|_V \le A \| \nu \|,
       \\
       \| \nu \|&\qquad \text{always.}
     \end{cases}
   \end{equation*}
   In particular,
   \begin{equation*}
    \| S \nu \| \leq
\gamma_H    \| \nu \|
     + \frac{1-\gamma_H}{A}  \| \nu \|_V
     \qquad \text{always,}
   \end{equation*}
   since the inequality can be checked to be true in the two cases
   $\| \nu \|_V \le A \| \nu \|$ and $\| \nu \|_V > A \| \nu \|$.
   Iterating this we get
   $m_{n+1} \leq \gamma_H m_n + \frac{1-\gamma_H}{A} v_n$. Together
   with \eqref{eq:1}, this gives the system
   \begin{align*}
     v_{n+1} &\leq \gamma_L v_n + K m_n,
     \\
     m_{n+1} &\leq \frac{1-\gamma_H}{A}  v_n +    \gamma_H  m_n,
   \end{align*}
   whose associated matrix is
   \begin{equation*}
     M := \left(
       \begin{matrix}
         \gamma_L & K
         \\
        \frac{1-\gamma_H}{A} & \gamma_H 
       \end{matrix}
     \right).
   \end{equation*}
   One can easily see that the condition for the eigenvalues of this
   matrix to be both strictly less than 1 is that $1-\gamma_L > K/A$,
   so that both $v_n$ and $m_n$ decay exponentially in $n$. Existence
   and uniqueness of an   equilibrium in $\P_V$ follow as before.
 \end{proof}

% =========================================================================

 \section{Subgeometric convergence for discrete-time
   semigroups}
\label{sec:subgeometric-discrete}

We now extend Harris's Theorem to cases in which a weaker form of
Lyapunov condition \eqref{eq:Lyapunov} holds true, with a slowing of
the speed of decay as a drawback.

\smallskip In all of this section, $V \: \Omega \to [1,+\infty)$ is a
measurable weight function, still referred to as a Lyapunov or just
weight function and $\varphi \: [1, +\infty) \to [1, +\infty)$ a concave
function with $\varphi(1)=1$ and
$\lim_{v \to +\infty} \varphi(v) / v = 0$.  The following assumption
generalizes the Lyapunov condition from Hypothesis
\ref{hyp:op_Lyapunov} and will be used in the subgeometric version of
Harris's theorem below.

\begin{hyp}[Weak operator Lyapunov condition]
  \label{hyp:op_Lyapunov_weak}
  A stochastic operator $S$ satisfies a
  \emph{weak Lyapunov condition} for $V$ and $\varphi$ if there
  exist $K > 0$ and $0 < \varsigma < 1$ such that
  \begin{equation}
    \label{eq:weak_Lyapunov_implicit}
    \| S \mu \|_{V} + \varsigma \| \mu \|_{\varphi(V)}
    \le
    \| \mu \|_{V} +  K \|\mu \|, \qquad \text{for all $\mu \in \mathcal{M}_{V}$}.
  \end{equation}
\end{hyp}

Let us make some observations. 

\begin{rem}\label{rem:weakLyap1}
Because $\varphi \: [1,+ \infty) \to [1,+\infty)$ is a  concave function,
 then $\varphi$ must be continuous and nondecreasing. 
 The continuity of $\varphi$ ensures $\varphi(V) \equiv \varphi \circ V$ is measurable.
 The asymptotic 
condition $\lim_{v \to +\infty} \varphi(v) / v = 0$ ensures that we are not in the  framework of Section
\ref{sec:Harris} since Hypothesis \ref{hyp:op_Lyapunov} does not hold. 
\end{rem}

\begin{rem}\label{rem:FellerOp}
  If $S$ is a Feller-type stochastic operator and $P$ is the associated Markov-Feller operator on
  $\mathcal{C}_0(\Omega)$ such that $P^* = S$ then, by duality,
  Hypothesis \ref{hyp:op_Lyapunov_weak} is equivalent to the property
  \begin{equation*}
     P V + \varsigma \varphi(V) \leq V + K,
  \end{equation*}
  which is perhaps more often found in the literature  (see for instance, \citet[Theorem~3.3 (i)]{GDF2009}). 
 It is worth emphasizing here that a possible definition of the function $PV$ is $PV := \lim P(V \varphi_n) \in [0,\infty]$, 
where $(\varphi_n)$ is a nonnegative sequence of $C_0(\Omega)$ such that $\varphi_n \nearrow 1$, 
  which belongs to $\LL^\infty_{loc}(\Omega)$ because of the above Lyapunov property.

\end{rem}

\subsection{Existence of an equilibrium}
\label{sec:existence}

We now show that under weak Lyapunov and coupling conditions one can
build a norm $\Nt \cdot \Nt_{V}$ equivalent to $\|\cdot\|_{V}$ for
which our stochastic operator $S$ is a contraction, in a quantitative
sense. This will be used for existence and uniqueness results, and later for
obtaining decay rates.

\begin{lem}
  \label{lem:nonexpansive}
  Consider a stochastic operator $S$ such
  that
  \begin{enumerate}
  \item $S$ satisfies a weak Lyapunov condition (Hypothesis
    \ref{hyp:op_Lyapunov_weak}) associated to functions $V$, $\varphi$ and 
    constants $K, \varsigma$.
    
  \item For some integer $N \geq 1$, the operator $S^N$ satisfies a
    local coupling condition (Hypothesis \ref{hyp:local_coupling}) for
    $\varphi(V)$,  with constant $A > K/\varsigma$. \Black
  \end{enumerate}
  Then for any $\nu \in \mathcal{N}_V$, there exists an integer
  $n$ with $N \leq n \leq 2N-1$ such that
  \begin{equation}
    \label{eq:subHarris-nonexp}
    \Nt S^n \nu \Nt_{V} + \alpha \sum_{k=0}^{n-1} \| S^k \nu \|_{\varphi(V)}
    \leq \Nt \nu \Nt_{V},
    \qquad \text{for all $\nu \in \cN_{V}$,}
  \end{equation}
  where
  \begin{equation*}
    \Nt \mu \Nt_{V} := \| \mu \| +  \beta \|\mu\|_{V},
    \qquad \text{for $\mu \in \mathcal{M}_{V}$,}
  \end{equation*}
and  with 
  \begin{equation}
    \label{eq:betadelta}
    \beta := (1-\gamma_H)/(KN),
    \quad \alpha := \beta (\varsigma - K/A) > 0.
  \end{equation}
\end{lem}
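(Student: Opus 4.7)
My plan is to first telescope the weak Lyapunov condition into a single summed inequality and then exploit a dichotomy on the size of the $\varphi(V)$-moments along the orbit of $\nu$ under $S$ to select the correct $n \in \{N,\ldots,2N-1\}$ so that the local coupling of $S^N$ can be invoked. Concretely, applying Hypothesis~\ref{hyp:op_Lyapunov_weak} to each iterate $S^k\nu$ and summing over $k=0,\ldots,n-1$, the $\|\cdot\|_V$ terms telescope and yield
$$
\|S^n\nu\|_V + \varsigma \sum_{k=0}^{n-1} \|S^k\nu\|_{\varphi(V)}
\le \|\nu\|_V + K \sum_{k=0}^{n-1} \|S^k\nu\|.
$$
Multiplying by $\beta$, adding $\|S^n\nu\|$ to both sides, substituting $\alpha = \beta(\varsigma - K/A)$ and cancelling $\beta\|\nu\|_V$, the target inequality \eqref{eq:subHarris-nonexp} reduces to
$$
\|S^n\nu\| + \beta K \sum_{k=0}^{n-1}\Bigl(\|S^k\nu\| - \tfrac{1}{A}\|S^k\nu\|_{\varphi(V)}\Bigr) \le \|\nu\|.
$$
So it suffices to exhibit one $n\in\{N,\ldots,2N-1\}$ realizing this bound.

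I would then split into two cases. Case (a): $\|S^k\nu\|_{\varphi(V)} > A\|S^k\nu\|$ for every $k\in\{0,\ldots,N-1\}$. Then I take $n=N$; every term in the sum is non-positive, and the displayed inequality collapses to $\|S^N\nu\|\le\|\nu\|$, which is just the non-expansiveness \eqref{eq:Doeblin-contrac1}. Case (b): there is a smallest $k^*\in\{0,\ldots,N-1\}$ with $\|S^{k^*}\nu\|_{\varphi(V)}\le A\|S^{k^*}\nu\|$. Then I take $n:=N+k^*\in\{N,\ldots,2N-1\}$; the local coupling of $S^N$ with respect to $\varphi(V)$, applied to $S^{k^*}\nu\in\cN_{\varphi(V)}$, gives $\|S^n\nu\|\le\gamma_H\|S^{k^*}\nu\|$. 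In the sum, the indices $k<k^*$ contribute non-positively by minimality of $k^*$ and can be discarded; for the remaining $N$ indices $k\in[k^*,n-1]$, the non-expansiveness $\|S^k\nu\|\le\|S^{k^*}\nu\|$ together with the non-negativity of the $\|S^k\nu\|_{\varphi(V)}$ terms gives
$$
\sum_{k=k^*}^{n-1}\Bigl(\|S^k\nu\|-\tfrac{1}{A}\|S^k\nu\|_{\varphi(V)}\Bigr)\le N\|S^{k^*}\nu\|.
$$
Combining these two bounds yields
$$
\|S^n\nu\|+\beta KN\|S^{k^*}\nu\|\le(\gamma_H+\beta KN)\|S^{k^*}\nu\|=\|S^{k^*}\nu\|\le\|\nu\|,
$$
using the definition $\beta=(1-\gamma_H)/(KN)$, which is exactly $\gamma_H+\beta KN=1$.

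The main obstacle is the right choice of $n$ in the two cases and verifying that the bounds on the signed sum telescope correctly into the desired inequality; once the algebraic reduction in the first paragraph is done, the value of $\beta$ is forced by requiring $\gamma_H+\beta KN=1$ in case~(b), and the hypothesis $A>K/\varsigma$ is precisely what ensures $\alpha=\beta(\varsigma-K/A)>0$.
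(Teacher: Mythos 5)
Your proof is correct and follows essentially the same strategy as the paper's: the same dichotomy on the first index $k^*$ at which $\|S^{k^*}\nu\|_{\varphi(V)}\le A\|S^{k^*}\nu\|$, the same choice $n=N$ or $n=N+k^*$, the same application of the coupling of $S^N$ at time $k^*$, and the same role of $\beta=(1-\gamma_H)/(KN)$ forcing $\gamma_H+\beta KN=1$. Your upfront telescoping of the weak Lyapunov inequality, which reduces the claim to a single scalar inequality on $\|S^n\nu\|+\beta K\sum_k(\|S^k\nu\|-\tfrac1A\|S^k\nu\|_{\varphi(V)})$, is a clean repackaging of the paper's two-range bookkeeping rather than a different argument.
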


\begin{proof}[Proof of Lemma~\ref{lem:nonexpansive}]
  We fix $\nu \in \cN_{V}$ and denote $\nu_k := S^k \nu$ for all
  integer $k \geq 0$  and we set $V_0 := \varphi(V)$. 
  We observe that if for a given $k$, we have
  \begin{equation}
    \label{eq:V0-large}
    \|\nu_k\|_{V_0} \geq A \|\nu_k\|,
  \end{equation}
  then this inequality and the weak Lyapunov condition in Hypothesis
  \ref{hyp:op_Lyapunov_weak} imply
  \begin{equation*}
    \| \nu_{k+1} \|_V  \leq \| \nu_{k} \|_V
    - \big(\varsigma - \frac{K}{A}\big) \| \nu_k\|_{V_0},
  \end{equation*}
  where the quantity $\varsigma - K/A > 0$ by hypothesis, which allows us
  to carry out the argument. Multiplying by $\beta$, using that
  $\alpha = \beta (\varsigma - K/A)$, and the contractivity
  $\| \nu_{k+1} \| \leq \| \nu_k \|$, we have
  \begin{equation*}
    \beta \| \nu_{k+1} \|_V + \| \nu_{k+1} \|
    \leq
    \beta \| \nu_{k} \|_V
    - \alpha \| \nu_k\|_{V_0} + \|\nu_k\|,
  \end{equation*}
  that is
  \begin{equation}
    \label{eq:contractive-norm}
    \Nt \nu_{k+1} \Nt_V 
    \leq \Nt \nu_{k} \Nt_V - \alpha \| \nu_{k} \|_{V_0}.
  \end{equation}
  Now we have two cases:

  \smallskip\noindent {\sl Case 1. } If \eqref{eq:V0-large} holds for
  all integer $k$ with $0 \leq k \leq n-1$, then we directly obtain
  \eqref{eq:subHarris-nonexp} by iterating the difference inequality
  \eqref{eq:contractive-norm}.
  
  \smallskip\noindent {\sl Case 2. } If \eqref{eq:V0-large} fails for
  some $k$ in $\{0, \dots, n-1\}$, then take $k^*$ the smallest
  integer in this range in which the condition fails. Then we may use
  \eqref{eq:contractive-norm} for $0 \leq k < k^*$ and obtain
  \begin{equation}
    \label{eq:2}
    \Nt \nu_{k^*} \Nt_{V} 
    \leq \Nt \nu \Nt_{V} - \alpha \sum_{k=0}^{k^*-1} \| \nu_k \|_{V_0}.
  \end{equation}
  Define now $n := N + k^*$ in this case. Using
  that $S^N$ satisfies the coupling condition, we have
  \begin{equation}
    \label{eq:3}
    \| \nu_n \| \leq \gamma_H \|\nu_{k^*}\|.
  \end{equation}
  On the other hand, we may use the weak Lyapunov condition and the
  fact that $k \mapsto \| \nu_k \|$ is
  nonincreasing to get
  \begin{equation*}
    \|\nu_{k+1}\|_V \leq \|\nu_k\|_V - \varsigma \|\nu_{k}\|_{V_0}
    + K \|\nu_{k^*}\|,
  \end{equation*}
  for all $k = k^*, \dots, n-1$. Summing the inequality in this range,
  we get
  \begin{equation*}
    \|\nu_n\|_V  \leq
    \|\nu_{k^*}\|_V - \varsigma \sum_{k=k^*}^{n-1} \|\nu_k\|_{V_0}
    + NK \|\nu_{k^*}\|.
  \end{equation*}
  Multiplying by $\beta$ and adding $\|\nu_n\|$ to complete $\Nt \nu_n
  \Nt$ on the left hand side,
  we deduce
  \begin{equation*}
    \beta \|\nu_n\|_V + \| \nu_n\|  \leq
    \beta \|\nu_{k^*}\|_V - \beta \varsigma \sum_{k=k^*}^{n-1} \|\nu_k\|_{V_0}
    + \beta NK \|\nu_{k^*}\| + \| \nu_n\|.
  \end{equation*}
  Using \eqref{eq:3} and reorganising terms, we conclude with 
  \begin{multline*}
    \Nt \nu_n \Nt_V + \alpha \sum_{k=k^*}^{n-1} \|\nu_k\|_{V_0}
    \leq
    \beta \|\nu_{k^*}\|_V  
    + (\beta NK + \gamma_H) \|\nu_{k^*}\|
    \\
    =
    \beta \|\nu_{k^*}\|_V + \|\nu_{k^*}\|
   = \Nt \nu_{k_*} \Nt_V
    \leq \Nt \nu \Nt_{V} - \alpha \sum_{k=0}^{k^*-1} \| \nu_k \|_{V_0},
  \end{multline*}
  where in the last inequality we have used \eqref{eq:2}. This shows
  the result.
\end{proof}

\begin{thm}[Existence of equilibrium]
  \label{thm:existence}
  Consider a stochastic operator $S$ satisfying the same conditions as
  in Lemma \ref{lem:nonexpansive}.  Then there exists an equilibrium
  $\mu^* \in \P_{\varphi(V)}$.
\end{thm}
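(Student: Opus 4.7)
The plan is to iterate Lemma~\ref{lem:nonexpansive} along the orbit of a fixed probability measure so as to obtain summability of $\|S^{k+1}\mu_0 - S^k\mu_0\|_{\varphi(V)}$, and then recover $\mu^*$ as the limit of a Cauchy sequence in $\M_{\varphi(V)}$.

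More precisely, I would fix any $\mu_0 \in \P_V$ (a natural non-degeneracy assumption) and apply Lemma~\ref{lem:nonexpansive} to $\nu := S\mu_0 - \mu_0 \in \cN_V$. Because the lemma yields $\Nt S^{n_1}\nu \Nt_V \leq \Nt \nu \Nt_V$, the iterate $S^{n_1}\nu$ still lies in $\cN_V$ and the lemma reapplies to it, producing $n_2, n_3, \dots \in [N, 2N-1]$. Setting $N_j := n_1 + \cdots + n_j$, an induction telescoping the successive inequalities gives
\begin{equation*}
\Nt S^{N_j}\nu \Nt_V + \alpha \sum_{k=0}^{N_j -1}\|S^k \nu\|_{\varphi(V)} \leq \Nt \nu \Nt_V, \qquad j \geq 1.
\end{equation*}
Since $N_j \geq jN \to \infty$ and the first term is nonnegative, letting $j \to \infty$ and recognizing $S^k \nu = S^{k+1}\mu_0 - S^k\mu_0$ yields the summability
\begin{equation*}
\sum_{k=0}^\infty \|S^{k+1}\mu_0 - S^k \mu_0\|_{\varphi(V)} \leq \alpha^{-1}\Nt \nu \Nt_V < \infty.
\end{equation*}

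Therefore $(S^k \mu_0)_k$ is Cauchy in the Banach space $(\M_{\varphi(V)}, \|\cdot\|_{\varphi(V)})$ and converges to some $\mu^* \in \M_{\varphi(V)}$. Since $\varphi(V) \geq 1$, the convergence also holds in total variation, which preserves positivity and total mass, so $\mu^* \in \P_{\varphi(V)}$. Invariance is then obtained from the non-expansiveness of $S$ in total variation: $\|S^{k+1}\mu_0 - S\mu^*\| = \|S(S^k\mu_0 - \mu^*)\| \leq \|S^k\mu_0 - \mu^*\| \to 0$, while also $S^{k+1}\mu_0 \to \mu^*$ in total variation, so $S\mu^* = \mu^*$ by uniqueness of the limit.

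The main delicate point is the iteration: since the integer $n$ produced by Lemma~\ref{lem:nonexpansive} depends on its input, one must carefully reapply the lemma to $S^{N_{j-1}}\nu$ at each stage rather than to $\nu$ itself. Once this telescoping bookkeeping is in place, the remainder of the argument parallels the existence proofs in Theorems~\ref{thm:Doeblin} and~\ref{thm:Harris}; note in particular that we do not require the limit to have finite $V$-moment, only finite $\varphi(V)$-moment, which is exactly what the subgeometric framework provides.
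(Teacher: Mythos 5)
Your proposal is correct and follows essentially the same route as the paper: fix $\mu_0 \in \P_V$, apply Lemma~\ref{lem:nonexpansive} recursively along the orbit of $\nu = S\mu_0 - \mu_0$, telescope to get summability of $\|S^k\nu\|_{\varphi(V)}$, and conclude that $(S^k\mu_0)$ is Cauchy in $\M_{\varphi(V)}$ with limit a fixed point. Your write-up is in fact a bit more careful than the paper's about the bookkeeping of the iteration and the passage from convergence to invariance.
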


\begin{proof}[Proof of Theorem~\ref{thm:existence}]
  Take any $\mu_0 \in \P_V$ and define
  \begin{equation*}
    \nu_0 := S \mu_0 - \mu_0,
    \qquad
    \nu_{k} := S^k \nu_0, \quad k \geq 1.
  \end{equation*}
  From Lemma \ref{lem:nonexpansive}, we can find an increasing sequence $(n_i)_{i
    \geq 0}$ with $n_0 = 0$, $N \leq n_{i+1} - n_i \leq 2N-1$ and
  \begin{equation*}
    \Nt \nu_{n_{i+1}} \Nt_V + \alpha \sum_{k = n_i}^{n_{i+1}-1} \|
    \nu_k \|_{\varphi(V)} \leq \Nt \nu_0 \Nt_V,
    \qquad i \geq 0.
  \end{equation*}
  Summing this for all $i$, we get
  \begin{equation*}
    \alpha \sum_{k = 0}^{\infty} \| \nu_k \|_{\varphi(V)}
    \leq \Nt \nu_{0} \Nt_V,
    \qquad i \geq 0.
  \end{equation*}
  This shows that the sequence of probability measures $( S^k \mu_0 )_{k \geq 0}$
  is a Cauchy sequence in the norm $\| \cdot \|_{\varphi(V)}$, and
  hence converges to a certain probability measure $\mu^* \in  \P_{\varphi(V)}$
  which must satisfy $S \mu^* = \mu^*$ by construction. 
\end{proof}

\subsection{Uniqueness of equilibrium}
\label{sec:uniqueness}

Another consequence of Hypotheses \ref{hyp:local_coupling} and
\ref{hyp:op_Lyapunov_weak} is the uniqueness of equilibrium, that 
we present in two different frameworks.

\begin{cor}[Uniqueness of equilibrium]
  \label{cor:uniqueness}
  Let $S$ be a stochastic operator which satisfies the  Lyapunov condition (Hypothesis
    \ref{hyp:op_Lyapunov_weak}) and the   local coupling condition (Hypothesis \ref{hyp:local_coupling}) 
 of the existence Theorem \ref{thm:existence} for two couples
  of weight and sublinear functions $(V_1,\varphi_1)$ and $(V_2,\varphi_2)$ such that $\varphi_2(V_2) \ge V_1$.  
  Then $S$ has at most one equilibrium in $\P_{\varphi_2(V_2)}$.
\end{cor}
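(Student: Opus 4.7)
The plan is to reduce uniqueness to an application of Lemma~\ref{lem:nonexpansive} applied to the difference of two candidate equilibria, using the domination $\varphi_2(V_2) \ge V_1$ only to guarantee finiteness of the relevant weighted norm.

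Concretely, suppose $\mu_a^*, \mu_b^* \in \P_{\varphi_2(V_2)}$ are two equilibria of $S$ and set $\nu := \mu_a^* - \mu_b^* \in \cN_{\varphi_2(V_2)}$. By linearity of $S$ together with $S\mu_a^* = \mu_a^*$ and $S\mu_b^* = \mu_b^*$, we have the crucial invariance property $S^k \nu = \nu$ for every $k \ge 0$. The assumption $\varphi_2(V_2) \ge V_1$ gives $\|\nu\|_{V_1} \le \|\nu\|_{\varphi_2(V_2)} < \infty$, so $\nu \in \cN_{V_1}$; in particular $\Nt \nu \Nt_{V_1} < \infty$.

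Now I apply Lemma~\ref{lem:nonexpansive} with the pair $(V_1,\varphi_1)$: there exists an integer $n$ with $N \le n \le 2N-1$ such that
\begin{equation*}
\Nt S^n \nu \Nt_{V_1} + \alpha \sum_{k=0}^{n-1} \|S^k \nu\|_{\varphi_1(V_1)} \le \Nt \nu \Nt_{V_1}.
\end{equation*}
Using $S^k\nu = \nu$ on both sides, this collapses to
\begin{equation*}
\Nt \nu \Nt_{V_1} + \alpha\, n\, \|\nu\|_{\varphi_1(V_1)} \le \Nt \nu \Nt_{V_1},
\end{equation*}
so that $\|\nu\|_{\varphi_1(V_1)} = 0$ (here I use $\alpha > 0$ and $n \ge N \ge 1$, together with $\Nt\nu\Nt_{V_1} < \infty$ to subtract safely). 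Since $\varphi_1 \ge 1$ on $[1,+\infty)$ and $V_1 \ge 1$, we have $\varphi_1(V_1) \ge 1$, hence $\|\nu\| \le \|\nu\|_{\varphi_1(V_1)} = 0$. This yields $\nu = 0$, i.e.\ $\mu_a^* = \mu_b^*$.

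The only subtlety, and the one place where the role of the two different pairs of weight/sublinear functions is felt, is in justifying that $\nu \in \cN_{V_1}$ so that Lemma~\ref{lem:nonexpansive} actually applies. This is exactly what the hypothesis $\varphi_2(V_2) \ge V_1$ delivers: equilibria are only assumed to live in $\P_{\varphi_2(V_2)}$, and we are forced to run the contraction argument in the stronger weighted space $\cN_{V_1}$. After that reduction the argument is essentially immediate, since $S$-invariance of $\nu$ turns the one-step dissipation inequality into $0 \le -\alpha n \|\nu\|_{\varphi_1(V_1)}$ without any need to iterate.
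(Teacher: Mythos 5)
Your proof is correct and follows essentially the same route as the paper's: form $\nu := \mu_a^* - \mu_b^*$, use $\varphi_2(V_2) \ge V_1$ to place $\nu$ in $\cN_{V_1}$, apply Lemma~\ref{lem:nonexpansive} for the pair $(V_1,\varphi_1)$, and exploit $S^k\nu = \nu$ to force $\|\nu\|_{\varphi_1(V_1)} = 0$. The extra care you take in justifying the subtraction (finiteness of $\Nt\nu\Nt_{V_1}$) and the final step $\varphi_1(V_1)\ge 1 \Rightarrow \nu = 0$ is welcome but does not change the argument.
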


\begin{proof}[Proof of Corollary \ref{cor:uniqueness}]
  Let us consider two equilibria
  $\mu^*_1 ,\mu^*_2 \in \P_{\varphi_2(V_2)}$ and let us set
  $\nu := \mu^*_2 - \mu^*_1 \in \cN_{\varphi_2(V_2)} \subset
  \cN_{V_1}$.  From Lemma \ref{lem:nonexpansive} applied to
  $(V_1,\varphi_1)$ and because $S^k \nu = \nu$ for any $k \ge 0$, we
  get
$$
 \Nt \nu  \Nt_{V_1} + \alpha \sum_{k=0}^{n-1} \| \nu \|_{\varphi_1(V_1)}
    \leq \Nt \nu \Nt_{V_1},
    $$
for some equivalent norm $ \Nt \cdot  \Nt_{V_1}$, some integer $n \ge 1$ and some constant $\alpha > 0$. That implies $\| \nu \|_{\varphi_1(V_1)} = 0$, 
and thus $\mu^*_2= \mu^*_1$.
\end{proof}
\Black

We now consider the case when $S$ is a Feller-type stochastic operator. 

\begin{cor}[Uniqueness of equilibrium]
  \label{lem:uniqueness}
  Let $S$ be a Feller-type stochastic operator which satisfies the
  hypotheses of the existence Theorem \ref{thm:existence}.  Then $S$
  has a unique equilibrium $\mu \in \P_{\varphi(V)}$. 
\end{cor}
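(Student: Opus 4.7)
The plan is to upgrade the existence statement of Theorem \ref{thm:existence} into a uniqueness statement by combining the contractivity from Lemma \ref{lem:nonexpansive} with the weak-$*$ continuity afforded by the Feller duality. Let $P \: C_0(\Omega) \to C_0(\Omega)$ denote the Markov-Feller operator with $P^* = S$.

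First, I would show that the equilibrium $\mu^*$ produced in the proof of Theorem \ref{thm:existence} is independent of the starting probability $\mu_0 \in \P_V$. Given two initial probabilities $\mu_0, \mu_0' \in \P_V$, set $\eta_0 := \mu_0 - \mu_0' \in \cN_V$ and iterate Lemma \ref{lem:nonexpansive} exactly as in the existence proof: this produces an increasing sequence $(n_i)_{i \ge 0}$ with $n_0 = 0$ and $N \le n_{i+1} - n_i \le 2N-1$, together with the telescoped bound
\begin{equation*}
  \alpha \sum_{k=0}^{\infty} \| S^k \eta_0 \|_{\varphi(V)} \le \Nt \eta_0 \Nt_V < \infty.
\end{equation*}
In particular $\| S^k \eta_0 \|_{\varphi(V)} \to 0$, so the $\|\cdot\|_{\varphi(V)}$-limits of $(S^k \mu_0)$ and $(S^k \mu_0')$ coincide. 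Applied to $\mu_0 = \delta_x$ for any $x \in \Omega$ (which lies in $\P_V$ because $V(x) < \infty$), this yields $S^k \delta_x \to \mu^*$ in $\|\cdot\|_{\varphi(V)}$, hence in total variation, with the same limit $\mu^*$ for every $x$.

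Second, I would invoke the Feller duality. For any $f \in C_0(\Omega)$ one has $P^k f \in C_0(\Omega)$ and $(P^k f)(x) = \int f \, d(S^k \delta_x)$, so the previous step gives the pointwise convergence
\begin{equation*}
  (P^k f)(x) \xrightarrow[k \to \infty]{} \int f \, d\mu^*, \qquad x \in \Omega,
\end{equation*}
together with the uniform bound $|(P^k f)(x)| \le \|f\|_\infty$. Now let $\mu_1^* \in \P_{\varphi(V)}$ be any stationary measure. Duality and invariance give $\int f \, d\mu_1^* = \int P^k f \, d\mu_1^*$, and dominated convergence (with dominating constant $\|f\|_\infty$, which is $\mu_1^*$-integrable as $\mu_1^*$ is a probability) yields
\begin{equation*}
  \int f \, d\mu_1^* = \lim_{k \to \infty} \int P^k f \, d\mu_1^* = \int f \, d\mu^*,
\end{equation*}
for every $f \in C_0(\Omega)$. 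By the Markov-Riesz representation recalled in the introduction, this forces $\mu_1^* = \mu^*$.

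The main obstacle is the first step: the existence proof only produces a Cauchy sequence for each fixed initial datum, so one must argue that the limit does not depend on the initialisation. This is handled by iterating Lemma \ref{lem:nonexpansive} along a sequence of stopping times to gain the summability of $\| S^k \eta_0 \|_{\varphi(V)}$ displayed above; crucially this requires $\eta_0 \in \cN_V$, which is why one compares pairs of initial measures in $\P_V$ rather than directly comparing $\mu^*$ to $\mu_1^*$ (the latter may fail to be in $\M_V$). The Feller hypothesis plays no role in this step but is decisive in the second, both to ensure $P^k f \in C_0(\Omega)$, so that the pointwise convergence even makes sense, and because $C_0(\Omega)$ separates the finite Radon measures on $\Omega$, allowing the pointwise identity for test functions to be promoted to equality of measures.
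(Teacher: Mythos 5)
Your proof is correct, but it follows a genuinely different route from the paper's. The paper argues directly on the difference $\nu := \mu_1^* - \mu_2^*$ of two putative equilibria: since $\nu$ is only known to lie in $\cN_{\varphi(V)}$ and not necessarily in $\cN_V$, the weak Lyapunov inequality cannot be applied to it directly, and the paper substitutes an approximation argument (Lemma \ref{lem:concave_psi_Lyapunov} applied to bounded concave truncations $\psi_n$ with $\psi_n' \nearrow 1$, using the Feller structure and Jensen's inequality) to derive $\varsigma\|\nu\|_{\varphi(V)} \le K\|\nu\|$; this places $\nu$ in the regime where the local coupling condition for $S^N$ applies and forces $\|\nu\| \le \gamma_H\|\nu\|$, hence $\nu = 0$. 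You instead upgrade the existence machinery to an ergodicity statement --- $S^k\delta_x \to \mu^*$ in total variation for every $x$, with a limit independent of $x$ because Lemma \ref{lem:nonexpansive} applies to $\delta_x - \delta_y \in \cN_V$ --- and then transfer this to an arbitrary invariant probability by duality and dominated convergence. Your route avoids the truncation lemma entirely and in fact yields a stronger conclusion: your second step only uses that $\mu_1^*$ is a probability measure, never that it has finite $\varphi(V)$-moment, so you obtain uniqueness of the equilibrium in all of $\P$, not merely in $\P_{\varphi(V)}$. The trade-off is that your argument leans on the full strength of the constructive convergence from every Dirac mass and on the Feller duality in an essential way at two places ($P^k f \in C_0(\Omega)$ and the separation of measures by $C_0(\Omega)$), whereas the paper's argument isolates the single point where the Feller hypothesis is needed (to make sense of $P\psi(V)$) and stays closer in spirit to the non-Feller uniqueness statement of Corollary \ref{cor:uniqueness}.
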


Before we prove that uniqueness result we will show that the weak
Lyapunov Hypothesis \ref{hyp:op_Lyapunov_weak} implies similar
inequalities for $\psi(V)$, where $\psi$ is a concave function:

\begin{lem}
  \label{lem:concave_psi_Lyapunov}
  Let $S$ be a Feller-type stochastic operator which satisfies the
  weak Lyapunov Hypothesis \ref{hyp:op_Lyapunov_weak} for $V$. From
  Remark~\ref{rem:FellerOp}, that is, $S = P^*$ and
  \begin{equation}
    \label{eq:weak_Lyapunov_dual}
    P V \leq V - \varsigma \varphi(V) + K.
  \end{equation}
  Then for any concave function $\psi \: [1,+ \infty) \to [1,+\infty)$, we have
  \begin{equation*}
    P \psi(V) \leq \psi(V) - \varsigma \psi'(V) \varphi(V) + K \psi'(V).
  \end{equation*}
\end{lem}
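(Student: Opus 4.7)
The plan is to exploit concavity of $\psi$ through its supporting tangent inequality, integrate against the transition kernel, and then feed in the weak Lyapunov bound. First, I would note that any concave function $\psi \: [1,+\infty) \to [1,+\infty)$ is automatically nondecreasing (otherwise concavity would force $\psi$ to diverge to $-\infty$), so the right derivative $\psi'$ exists everywhere on $[1,+\infty)$ and is nonnegative. Concavity then gives the standard supporting line inequality
\begin{equation*}
  \psi(w) \leq \psi(v) + \psi'(v)(w-v) \qquad \text{for all } v, w \in [1,+\infty).
\end{equation*}

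Next, I would fix $x \in \Omega$ and interpret $(Pf)(x)$ for a nonnegative measurable $f$ as the integral of $f$ against the transition probability $\pi_x := S\delta_x$ (a probability measure on $\Omega$ since $S$ is stochastic). The limiting definition $PV := \lim P(V\varphi_n)$ from Remark \ref{rem:FellerOp}, together with monotone convergence, makes this interpretation consistent whenever $f$ is nonnegative, in particular for $f = V$ and $f = \psi(V)$. Applying the tangent inequality with $v = V(x)$ and $w = V(y)$ and integrating in $y$ against $\pi_x(dy)$ yields
\begin{equation*}
  P\psi(V)(x) = \int \psi(V(y))\, \pi_x(dy) \leq \psi(V(x)) + \psi'(V(x))\bigl(PV(x) - V(x)\bigr).
\end{equation*}

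Finally, since $\psi'(V(x)) \geq 0$, I can insert the weak Lyapunov bound $PV - V \leq -\varsigma\varphi(V) + K$ from \eqref{eq:weak_Lyapunov_dual} without flipping the inequality, obtaining
\begin{equation*}
  P\psi(V)(x) \leq \psi(V(x)) - \varsigma\,\psi'(V(x))\,\varphi(V(x)) + K\,\psi'(V(x)),
\end{equation*}
which is the claim. The only delicate point is justifying the pointwise integral representation of $P$ against possibly unbounded $\psi(V)$; this is what requires the Feller-type assumption and the approximation $\varphi_n \nearrow 1$ with $\varphi_n \in C_0(\Omega)$, combined with monotone convergence and the fact that the dominating quantity $\psi(V(x)) + \psi'(V(x))(PV(x)-V(x))$ is finite because of the original Lyapunov bound. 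Once this is set up, no further calculation is needed beyond the tangent inequality and a single substitution.
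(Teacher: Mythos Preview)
Your proof is correct and follows essentially the same approach as the paper: both arguments rest on the concave tangent inequality $\psi(w) \leq \psi(v) + \psi'(v)(w-v)$ together with nonnegativity of $\psi'$ to feed in the Lyapunov bound. The only cosmetic difference is that the paper first isolates Jensen's inequality $P\psi(V) \leq \psi(PV)$ as an intermediate step (derived from the infimum-over-affine representation of $\psi$) and then applies the tangent bound at $V$, whereas you integrate the tangent inequality directly against the transition measure; the two computations collapse to the same line.
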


\begin{proof}[Proof of Lemma~\ref{lem:concave_psi_Lyapunov}]
  Notice that because $\psi$ is a  concave function, there holds
  $$
  \psi(v) = \inf_{\ell \in \cU_\psi} \ell(v), \quad \forall \, v \in \R, 
  $$
  where $\cU_\psi := \{\ell : \R \to \R, \,\, \ell(v) := a v + b, \,\, a, b \in \R, \,\, \ell \le \psi \}$. Using that $P$ is a positive operator, we deduce that 
  $$
  P\psi (V) \le P\ell (V) = \ell (PV), \quad \forall \, \ell \in \cU_\psi, 
  $$
  and then the Jensen's inequality
 \begin{equation}
    \label{eq:jensen2}
     P \psi(V) \leq \psi( P V).  
      \end{equation}
Using \eqref{eq:jensen2}, the nondecreasing property of $\psi$ (as emphasized in Remark~\ref{rem:weakLyap1}), \eqref{eq:weak_Lyapunov_dual} and 
the fact that $\psi$ is concave again, we get 
  \Black
  \begin{equation*}
    P \psi(V)
    \leq \psi( V - \varsigma \varphi(V) + K )
    \leq \psi(V) - \psi'(V)(\varsigma \varphi(V) - K),
  \end{equation*}
  which gives the inequality in the statement.
\end{proof}

\begin{proof}[Proof of Corollary \ref{lem:uniqueness}]
  The existence of an equilibrium is given by Theorem
  \ref{thm:existence}. Assume there are two equilibria
  $\mu^*_1, \mu^*_2 \in \P_{\varphi(V)}$, and call $\nu := \mu^*_1 - \mu^*_2$, so that in particular 
  $\nu \in \cN_{\varphi(V)}$ and  $S \nu = \nu$. 
   Similarly  as in the proof of Corollary \ref{cor:uniqueness}, we would like to use the weak Lyapunov 
   condition from Hypothesis  \ref{hyp:op_Lyapunov_weak} in order to get 
   \begin{equation}
    \label{eq:8}
       \varsigma \|\nu \|_{\varphi(V)} \leq K \|\nu\|,
   \end{equation}
   but this is not allowed because
   $\nu$ is not necessarily in $\cN_V$ and we cannot justify cancelling
   the term $\|\mu\|_V$ on both sides. Hence we carry out an
   approximation procedure in order to deduce \eqref{eq:8}. 
   Since $S$ is of Feller-type,
   $S = P^*$ for a Markov-Feller operator $P$.
   Take  $\psi \: [1,+\infty) \to [1,+\infty)$ a bounded concave function such that $ \psi'(v) \le 1$ for all $v \geq 1$, 
   so that 
  \begin{equation*}
    P \psi(V) \leq \psi(V) - \varsigma \psi'(V) \varphi(V) + K,
  \end{equation*}
  from Lemma \ref{lem:concave_psi_Lyapunov}. After integration and by duality, for any $0 \le \mu \in \M_{\varphi(V)}$, we have 
  \begin{equation*}
    \int \psi(V) S \mu
    \leq
    \int \psi(V) \mu  - \varsigma \int \psi'(V) \varphi(V) \mu
    + K \int \mu.
  \end{equation*} \Black
   Applying this to $\mu := |\nu| = |\mu^*_1 - \mu^*_2| \in \M_{\varphi(V)}$, we get
  \begin{equation*}
    \int \psi(V) |S \nu|
    \leq
    \int \psi(V) S |\nu|
    \leq
    \int \psi(V) |\nu|  - \varsigma \int \psi'(V) \varphi(V) |\nu|
    + K \int |\nu|,
  \end{equation*}
  and since $S \nu = \nu$, we deduce
   \begin{equation}
    \label{eq:8bis}
     \varsigma \int \psi'(V) \varphi(V) |\nu|
    \leq
    K \int |\nu|.
  \end{equation}
  Taking for example
  $\psi_n(v) := n \arctan \big( \frac{\pi}{2} + v/n \big)$,  so that $\psi_n'(v) \nearrow 1$ as $n\to\infty$, and passing
  to the limit as $n \to +\infty$ in \eqref{eq:8bis}, the
  dominated convergence theorem shows that \eqref{eq:8} holds true. 
Since \eqref{eq:8} holds, the iterated
  coupling condition (Hypothesis 2 in  Lemma~\ref{lem:nonexpansive}) gives that
  \begin{equation*}
    \| \nu \| = \| S^N \nu \| \leq \gamma_H \|\nu\|,
  \end{equation*}
  which implies $\|\nu\| = 0$ and hence $\mu^*_1 = \mu^*_2$.
\end{proof}

\subsection{Subgeometric decay rates}
\label{sec:rate}

For a nonempty interval $I \subseteq \R_+$ and a  function $\xi \: I \to \R$, we recall
that the associated Legendre transform $\xi^* : \R \to \R \cup \{+\infty\}$ defined by 
\begin{equation*}
  \xi^* (u) := \sup_{\lambda \in I} (\lambda u - \xi(\lambda)),
\end{equation*}
is an increasing
and convex function, and in particular it is continuous on the interior of the interval $D(\xi^*) := \{ u \in \R; \,   \xi^* (u) < + \infty\}$.
We also define the closely related transform
\begin{equation*}
  \xi_* (u) := \sup_{\lambda \in I} (\xi(\lambda) - \lambda u)
  = (-\xi)^*(-u),
\end{equation*}
also defined (and possibly $+\infty$) at all $u \in \R$.

\medskip
Our main theorem in the subgeometric case is the following, which
involves two different weight functions $V_1$ and $V_2$:

\begin{thm}[Subgeometric Harris, interpolated version]
  \label{theo:HarrisSubgeo1}
  Consider a stochastic operator $S$ such that:
  \begin{enumerate}
  \item $S$ satisfies a weak Lyapunov condition (Hypothesis
    \ref{hyp:op_Lyapunov_weak}) for two couples of weight and sublinear functions 
    $(V_1,\varphi_1)$, $(V_2,\varphi_2)$ and constants $K_1, \varsigma_1, K_2, \varsigma_2$,
    respectively, and such that $V_1 \leq V_2$.
  \item There exists an integer $N \geq 1$ such that $S^N$ satisfies a
    local coupling condition (Hypothesis \ref{hyp:local_coupling}) for
    both $\varphi_1(V_1)$ and $\varphi_2(V_2)$, with constants
    $A_1 > K_1 / \varsigma_1$, $A_2 > K_2 / \varsigma_2$  and same constant $\gamma_H$.
    
  \item The following interpolation condition holds: there is  function $\xi \:  \R_+ \to \R_+$ which is 
    increasing and satisfies
    $\xi(\lambda) / \lambda \to 0 \ \hbox{as} \ \lambda \to 0$ and such
    that
    \begin{equation}
      \label{eq:InterpolationV1V0V3}
      \lambda V_1 \le \varphi_1(V_1) + \xi(\lambda) V_2,
      \qquad \text{for all $\lambda > 0$.}  
    \end{equation}
  \end{enumerate}
  Then there exist constructive constants $C > 0$ and $0 < r < 1$
  (depending only on $\xi$, $K_i$, $\varsigma_i$, $A_i$ for $i = 1,2$,
  and on $\gamma_H$) such that
  \begin{equation}
    \label{eq:theo:HarrisSubgeo1:Estim1}
    \| S^n \nu \|_{V_1}
    \leq
    C \Theta \left( r n \right)
    \| \nu \|_{V_2},
    \qquad \text{for all $n \ge 1$,}
  \end{equation}
  and
  \begin{equation}
    \label{eq:theo:HarrisSubgeo1:Estim2}
    \| S^n \nu \|
    \leq
    \frac{C}{n}
    \Theta \left( r n \right) \| \nu \|_{V_2},
    \qquad \text{for all $n \ge 1$,}
  \end{equation}
  for any $\nu \in \cN_{V_2}$,
  where
  \begin{equation*}
    \Theta(t) := F^{-1}(t),
    \qquad
    F(\lambda) := \int_\lambda^1 \frac{1}{\xi^*(s)} \d s.
  \end{equation*}
\end{thm}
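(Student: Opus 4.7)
The plan is to apply Lemma~\ref{lem:nonexpansive} twice. First, with the pair $(V_2,\varphi_2)$: this yields a uniform bound $\|S^n\nu\|_{V_2}\le M$ along the orbit, with $M := C_2\|\nu\|_{V_2}$ and $C_2$ depending only on the constants of the weak Lyapunov condition for $(V_2,\varphi_2)$. Second, iterating the lemma with the pair $(V_1,\varphi_1)$, I would get an increasing sequence $0=n_0<n_1<\cdots$ with $n_{i+1}-n_i\in[N,2N-1]$, and, writing $W_i:=\Nt S^{n_i}\nu\Nt_{V_1}$,
$$
  W_{i+1} + \alpha \sum_{k=n_i}^{n_{i+1}-1}\|S^k\nu\|_{\varphi_1(V_1)} \le W_i.
$$
In particular $(W_i)$ is non-increasing and, by equivalence of norms, $W_i$ is comparable to $u_{n_i}:=\|S^{n_i}\nu\|_{V_1}$.

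The next step is to turn the Lyapunov dissipation $\|S^k\nu\|_{\varphi_1(V_1)}$ into a decay statement on $u_k$ itself using the interpolation hypothesis. Integrating \eqref{eq:InterpolationV1V0V3} against $|S^k\nu|$ and using the uniform bound $\|S^k\nu\|_{V_2}\le M$ gives $\lambda u_k\le\|S^k\nu\|_{\varphi_1(V_1)}+\xi(\lambda)M$ for every $\lambda>0$; taking the supremum in $\lambda$ yields the Legendre-transform bound
$$
  M\,\xi^*\!\left(u_k/M\right) \le \|S^k\nu\|_{\varphi_1(V_1)}.
$$
Keeping just the $k=n_i$ term in the displayed sum, and using $u_{n_i}\sim W_i$ together with the monotonicity of $\xi^*$, one obtains a scalar recursion
$$
  W_{i+1}\le W_i - c\,M\,\xi^*\!\left(\frac{W_i}{CM}\right)
$$
for constructive $c, C > 0$. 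Comparing this with the autonomous ODE $\dot w(t)=-cM\xi^*(w/(CM))$, whose solution decays exactly like $\Theta$ by the very definition of $F$ and $\Theta=F^{-1}$, a routine induction produces $W_i\le C'M\,\Theta(r'i)$; since $n\le 2Ni$, rescaling in time gives \eqref{eq:theo:HarrisSubgeo1:Estim1}.

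For the total variation estimate \eqref{eq:theo:HarrisSubgeo1:Estim2}, the extra $1/n$ factor is the delicate point. The idea would be to exploit two complementary features of the sequence $(\|S^k\nu\|_{\varphi_1(V_1)})_k$: on one hand the pointwise bound $\|S^k\nu\|_{\varphi_1(V_1)}\le u_k\le CM\Theta(rk)$ from the first estimate, and on the other the summability $\sum_k\|S^k\nu\|_{\varphi_1(V_1)}\le C\|\nu\|_{V_2}$ obtained by iterating Lemma~\ref{lem:nonexpansive}. Iterating the \emph{always}-form of the local coupling, $\|S^N\mu\|\le\gamma_H\|\mu\|+\tfrac{1-\gamma_H}{A_1}\|\mu\|_{\varphi_1(V_1)}$, for $\mu=S^{kN}\nu$, yields a convolution-type sum in which a dyadic split at $k\sim n/2$ should produce the claimed rate: the summability controls the early, geometrically-damped range, while the pointwise bound controls the late, slowly-decaying range.

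The main obstacle I foresee is twofold. First, one must handle the irregular sampling times $n_i\in[N,2N-1]$ when passing from the summed inequality to the scalar recursion for $W_i$, since intermediate $u_k$ are not a priori monotone and have to be absorbed via non-expansiveness plus the Lyapunov growth. Second, and more substantially, producing the extra factor $1/n$ in the total-variation bound while retaining the full $\Theta(rn)$ decay requires a careful quantitative balance between the pointwise and integrated controls on $\|S^k\nu\|_{\varphi_1(V_1)}$, rather than using either in isolation.
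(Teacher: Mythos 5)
Your argument for the weighted estimate \eqref{eq:theo:HarrisSubgeo1:Estim1} is essentially the paper's proof: uniform $V_2$-bound from Lemma~\ref{lem:nonexpansive} applied to $(V_2,\varphi_2)$, the dissipation inequality for $(V_1,\varphi_1)$ along the times $n_i$, the interpolation inequality integrated against $|S^k\nu|$, and an ODE comparison whose solution is $\Theta=F^{-1}$ (the paper packages the optimization in $\lambda$ inside Lemma~\ref{lem:dif_ineq} rather than passing to $\xi^*$ immediately, but this is the same computation). The sampling issue you flag is handled exactly as you suggest, by absorbing the at most $2N-2$ intermediate steps via the Lyapunov growth bound $\Nt S^i\nu_k\Nt_{V_1}\le C_1^i\Nt\nu_k\Nt_{V_1}$.

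The total variation estimate \eqref{eq:theo:HarrisSubgeo1:Estim2} is where your proposal has a genuine gap. Iterating $\|S^N\mu\|\le\gamma_H\|\mu\|+\tfrac{1-\gamma_H}{A_1}\|\mu\|_{\varphi_1(V_1)}$ gives
\begin{equation*}
  \|S^{nN}\nu\|\;\le\;\gamma_H^{\,n}\|\nu\|+c\sum_{k=0}^{n-1}\gamma_H^{\,n-1-k}\,d_{kN},
  \qquad d_j:=\|S^j\nu\|_{\varphi_1(V_1)},
\end{equation*}
and the dyadic split you describe then yields, for the early range $k\le n/2$, a term of order $\gamma_H^{n/2}\|\nu\|_{V_2}$ (harmless), and for the late range, $\sum_{k>n/2}\gamma_H^{\,n-1-k}d_{kN}\le\tfrac{1}{1-\gamma_H}\sup_{k>n/2}d_{kN}\lesssim M\,\Theta(rNn/2)$. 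The geometric kernel localizes at $k\approx n$, so the late-range contribution is only $O(\Theta(rn))$ with \emph{no} factor $1/n$; replacing the sup by the tail summability bound does not help either, since that tail is again only $O(\Theta(rn))$. In other words, your mechanism recovers nothing beyond what already follows trivially from \eqref{eq:theo:HarrisSubgeo1:Estim1} via $\|\cdot\|\le\|\cdot\|_{V_1}$. The paper's actual source of the $1/n$ is an averaging argument that does not use the coupling iteration at all: since $k\mapsto\|\nu_k\|$ is non-increasing and $\|\nu_k\|\le\|\nu_k\|_{\varphi_1(V_1)}$ (because $\varphi_1(V_1)\ge 1$), summing the dissipation inequality over the window $[n_j,n_i)$ gives
\begin{equation*}
  (n_i-n_j)\,\alpha\,\|\nu_{n_i}\|\;\le\;\alpha\sum_{k=n_j}^{n_i-1}\|\nu_k\|_{\varphi_1(V_1)}\;\le\;\Nt\nu_{n_j}\Nt_{V_1}\;\lesssim\;M\,\Theta(rj),
\end{equation*}
and choosing $j=\lfloor i/2\rfloor$ makes the window length of order $i$, which is exactly the extra factor $1/n$. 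You would need to replace your convolution argument by this (or an equivalent) averaging step to complete the proof.
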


\begin{rem} 
(1) -  Under the assumption of Theorem~\ref{theo:HarrisSubgeo1} and when $V_1 \le \varphi_2(V_2)$ or $S$ is of Feller-type, we get the 
 existence of an equilibrium (Theorem~\ref{thm:existence}), its uniqueness (Corollary~\ref{cor:uniqueness} or Corollary~\ref{lem:uniqueness}) and a decay rate of convergence to zero (Theorem~\ref{theo:HarrisSubgeo1}). 
 
 \smallskip
 (2) - Let us emphasize that, in contrast with Theorem~\ref{thm:Doeblin} and Theorem~\ref{thm:Harris}, in principle there is no reason that $\mu^*$ belongs to $\P_V$, and thus, we cannot apply Theorem~\ref{theo:HarrisSubgeo1} to $\mu-\mu^*$
 and deduce $S^n \mu \to \mu^*$ as $n \to \infty$  (with or even without rate!). We will come back on that issue in Remark~\ref{rem:subgeo-stabmu*} below.  
\end{rem}

In the rest of this section we prove Theorem
\ref{theo:HarrisSubgeo1}. We start with a finite difference inequality
which is at the basis of the estimates we carry out in the proof:

\begin{lem}
  \label{lem:discrete_ode}
  Let $(u_n)_{n \geq 0}$ be a nonnegative sequence which
  satisfies
  \begin{equation}
    \label{eq:lem_ode}
    u_{n+1} - u_n \leq - g(u_n)
    \qquad \text{for all integers $n \geq 0$,}
  \end{equation}
  for some continuous, increasing function
  $g \: (0,u_0] \to (0,+\infty)$ such that $v \mapsto 1/g(v)$ is not
  integrable on $(0,u_0)$. Then
  \begin{equation*}
    u_n \leq {H^{-1}(n)}
    \quad \text{for all integers $n \geq 0$,}
  \end{equation*}
  where
  \begin{equation*}
    H(u) := \int_u^{u_0} \frac{1}{g(v)} \d v
    \qquad \text{for $u \in (0,u_0]$.}
  \end{equation*}
\end{lem}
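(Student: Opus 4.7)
The plan is to view \eqref{eq:lem_ode} as a discrete counterpart of the ODE $\dot u = -g(u)$, whose solution satisfies $H(u(t))=t$ with $H$ as defined in the statement. I first observe that since $g$ is strictly positive on $(0,u_0]$, the inequality \eqref{eq:lem_ode} forces the sequence $(u_n)$ to be nonincreasing and bounded above by $u_0$, so that $u_n \in [0,u_0]$ for every $n$. If $u_n=0$ at some index, then all subsequent terms vanish as well by the sign and monotonicity, and the asserted bound $u_n \le H^{-1}(n)$ is trivial since the non-integrability hypothesis guarantees $H^{-1}(n)>0$. Hence it suffices to treat the indices at which $u_n>0$.

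The core step is to estimate the increments of $H(u_n)$. Since $g$ is increasing and $u_{n+1}\le v\le u_n$, we have $g(v)\le g(u_n)$ for all such $v$, and therefore
\[
H(u_{n+1})-H(u_n)=\int_{u_{n+1}}^{u_n}\frac{\d v}{g(v)}\ \ge\ \frac{u_n-u_{n+1}}{g(u_n)}\ \ge\ 1,
\]
where the last inequality is exactly \eqref{eq:lem_ode}. Iterating this telescoping estimate from $0$ up to $n$ and using $H(u_0)=0$ yields $H(u_n)\ge n$.

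Finally, the non-integrability of $1/g$ near $0$ ensures that $H \: (0,u_0] \to [0,+\infty)$ is a continuous, strictly decreasing bijection, so $H^{-1}\: [0,+\infty)\to (0,u_0]$ is itself continuous and strictly decreasing. Applying $H^{-1}$ to the inequality $H(u_n)\ge n$ and reversing the sense thus gives $u_n\le H^{-1}(n)$, which is the desired conclusion. There is no real obstacle here; the only point to handle with care is the possible vanishing of $u_n$, which is dealt with separately as above so that the comparison argument is applied only on the range where both $g$ and $H$ are defined.
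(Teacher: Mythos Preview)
Your argument is correct and in fact a bit slicker than the paper's. The paper proceeds by an ODE comparison: it introduces the solution $u(t)=H^{-1}(t)$ of $u'=-g(u)$, $u(0)=u_0$, and proves by induction that $u_n\le u(n)$, using at the inductive step an auxiliary solution $\tilde u$ with $\tilde u(n)=u_n$ together with the monotonicity of $g$ to get $\tilde u(n+1)\ge u_n-g(u_n)\ge u_{n+1}$. You instead bypass the ODE and the induction entirely by working on the other side of the bijection $H$: the single estimate $H(u_{n+1})-H(u_n)\ge 1$ is the whole proof. Both approaches use the same monotonicity of $g$ at the key moment; yours packages it more economically and makes the role of the non-integrability hypothesis (surjectivity of $H$, hence well-definedness of $H^{-1}(n)$ for all $n$) completely transparent. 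The paper's route, on the other hand, makes the ``discrete vs.\ continuous dynamics'' comparison explicit, which is conceptually nice but not needed for the bare statement.
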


\begin{proof}[Proof of lemma~\ref{lem:discrete_ode}]
  Let $u = u(t)$ be the solution for $t \geq 0$ to the ordinary
  differential equation
  \begin{equation*}
    u'(t) = -g(u(t)), \qquad u(0)=u_0,
  \end{equation*}
  which is precisely $u(t) = H^{-1}(t)$. We prove by
  induction that $u_n \leq u(n)$ for all $n \geq 0$. It is indeed true for
  $n=0$. If we assume $u(n) \geq  u_n$ for some $n \geq 0$, then
  \begin{equation*}
    u(n+1) \geq \tilde{u}(n+1),
  \end{equation*}
  where $\tilde{u}$ is the solution to
  \begin{equation*}
    \tilde{u}' = -g(\tilde{u}), \qquad \tilde{u}(n) = u_n.
  \end{equation*}
Since $\tilde{u}$ is decreasing and $g$ is increasing, we then have 
  \begin{equation*}
    \tilde{u}(n+1) = \tilde{u}(n) - \int_n^{n+1} g( \tilde{u}(t)) \d t
    \geq \tilde{u}(n) - g(\tilde{u}(n))
    = u_n - g(u_n) \geq u_{n+1},
  \end{equation*}
  which shows $u_{n+1} \leq u(n+1)$.
\end{proof}

\begin{lem}[Difference inequality]
  \label{lem:dif_ineq}
  Take $M > 0$, $0 < \delta \leq +\infty$, and
  $\zeta \: (0, \delta) \to (0,+\infty)$ a nonnegative function
  satisfying $\lim_{\lambda \to 0} \zeta(\lambda) / \lambda = 0$. If a
  sequence $(u_n)_{n \geq 0}$ of nonnegative numbers satisfies
  $u_0 \leq M$ and
  \begin{equation}
    \label{eq:difineq}
    u_{n+1} \leq (1 - \lambda) u_n + M \zeta(\lambda)
    \qquad \text{for all $n \geq 0$ and all $\lambda \in (0,\delta)$,}
  \end{equation}
  then
  \begin{equation*}
    u_n \leq M F^{-1}(n)
    \quad \text{for all integers $n \geq 0$,}
  \end{equation*}
  where
  \begin{equation*}
    F(u) := \int_u^1 \frac{1}{\zeta^*(v)} \d v
    \qquad \text{for $u \in (0,1]$,}
  \end{equation*}
  and $\zeta^*$ denotes the Legendre transform of $\zeta$.
\end{lem}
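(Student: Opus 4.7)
The approach is to turn the one-parameter family of inequalities in \eqref{eq:difineq} into a single autonomous difference inequality governed by the Legendre transform $\zeta^*$, and then invoke Lemma~\ref{lem:discrete_ode}. Setting $v_n := u_n/M$, the assumption \eqref{eq:difineq} is equivalent to
\begin{equation*}
  v_{n+1} - v_n \leq -\lambda v_n + \zeta(\lambda), \qquad \text{for every } \lambda \in (0,\delta).
\end{equation*}
Since the right-hand side may be minimised in $\lambda$ at each $n$, this gives the single inequality
\begin{equation*}
  v_{n+1} - v_n \leq -\sup_{\lambda \in (0,\delta)} \bigl( \lambda v_n - \zeta(\lambda) \bigr) = -\zeta^*(v_n),
\end{equation*}
which is the key reduction and packages all of the parametric information into $\zeta^*$.

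The next step is to verify that $g := \zeta^*$ satisfies the hypotheses of Lemma~\ref{lem:discrete_ode} on $(0, v_0]$. As a supremum of affine functions with positive slope, $\zeta^*$ is convex and nondecreasing, hence continuous on the interior of its domain. The sublinearity assumption $\zeta(\lambda)/\lambda \to 0$ as $\lambda \to 0^+$ forces $\zeta(\lambda) \to 0$, so $\zeta^*(0) = 0$, and it also ensures that for every $v > 0$ one can pick $\lambda$ small enough that $\lambda v - \zeta(\lambda) > 0$, giving $\zeta^*(v) > 0$; together with convexity, this yields strict monotonicity of $\zeta^*$ on $(0, +\infty)$. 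The non-integrability of $1/\zeta^*$ near $0$, which is what makes $F$ unbounded and $F^{-1}$ globally defined on $[0,+\infty)$, is implicit in the statement. Applying Lemma~\ref{lem:discrete_ode} with $g = \zeta^*$ and initial value $v_0 \leq 1$ then yields $v_n \leq H^{-1}(n)$, where
\begin{equation*}
  H(v) := \int_v^{v_0} \frac{\d w}{\zeta^*(w)}.
\end{equation*}

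To conclude, I compare $H$ with $F$: since $v_0 \leq 1$,
\begin{equation*}
  F(v) = H(v) + \int_{v_0}^{1} \frac{\d w}{\zeta^*(w)} \geq H(v), \qquad v \in (0, v_0],
\end{equation*}
and both $F$ and $H$ are strictly decreasing, so $F^{-1}(n) \geq H^{-1}(n) \geq v_n$. Multiplying by $M$ gives $u_n \leq M F^{-1}(n)$, as required. The most delicate point is the verification of the hypotheses of Lemma~\ref{lem:discrete_ode} for $\zeta^*$ (in particular the strict positivity and non-integrability near $0$ inherited from the sublinearity of $\zeta$); once these are in place, both the reduction to an autonomous inequality and the final comparison are routine.
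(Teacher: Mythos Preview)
Your proof is correct and follows essentially the same route as the paper: rescale by $M$, optimise over $\lambda$ to obtain $v_{n+1}-v_n\le -\zeta^*(v_n)$, apply Lemma~\ref{lem:discrete_ode}, and then compare the resulting primitive $H$ (based at $v_0$) with $F$ (based at $1$) using $v_0\le 1$. One minor remark: the non-integrability of $1/\zeta^*$ near $0$ that you flag as ``implicit in the statement'' is in fact automatic from what you already proved, since a convex function with $\zeta^*(0)=0$ satisfies $\zeta^*(w)\le (w/v_0)\zeta^*(v_0)$ and hence $1/\zeta^*(w)\gtrsim 1/w$ near $0$.
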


\begin{proof}[Proof of Lemma~\ref{lem:dif_ineq}]
  Call $v_n := u_n / M$ for $n \geq 0$. Minimising \eqref{eq:difineq} 
  in $\lambda$ we obtain
  \begin{equation*}
    v_{n+1} - v_n \leq -\zeta^* (v_n)
    \qquad \text{for $n \geq 0$}.
  \end{equation*}
  In particular $\zeta^*(v_n)$ must always be finite on $(0,v_0]$,
  since the sequence $(v_n)$ is assumed to be a sequence of
  nonnegative numbers, and $\zeta^*$ is a nondecreasing
  function. Notice also that $\zeta^*$ is continuous since it is
  convex, and that the condition
  $\lim_{\lambda \to 0} \zeta(\lambda)/\lambda = 0$ ensures
  $\zeta^*(v) > 0$ for all $v \in (0,v_0]$. Then Lemma \ref{lem:discrete_ode}
  with $g \equiv \zeta^*$ gives
  \begin{equation}
    \label{eq:11}
    v_n \leq \widetilde{F}^{-1}(n),
  \end{equation}
  where
  \begin{equation*}
    \widetilde{F}(v) := \int_v^{v_0} \frac{1}{\zeta^*(s)} \d s
    \qquad \text{for $v \in (0,v_0]$.}
  \end{equation*}
  Since $v_0 = u_0 / M \leq 1$ by assumption, this shows
  \begin{equation*}
    \widetilde{F}(v) \leq \int_v^{1} \frac{1}{\zeta^*(s)} \d s =: F(v)
    \qquad \text{for $v \in (0,v_0]$,}
  \end{equation*}
  where we understand $1/\zeta^*(s) = 0$ is $\zeta^*(s) =
  +\infty$. Hence \eqref{eq:11} gives
  \begin{equation*}
    u_n \leq M F^{-1}(n)
  \end{equation*}
  for all $n \geq 0$, as required.
\end{proof}

\smallskip

We also need a technical lemma which will be used to simplify the
bounds in our main results.

\begin{lem}
  \label{lem:theta-bound} Let $g \: (0,+\infty) \to (0,+\infty)$ be a
  positive, nondecreasing function with $\lim_{s \to 0} g(s)/s = 0$,
  and define
  \begin{equation*}
    F(\lambda) := \int_\lambda^1 \frac{1}{g(s)} \d s
    \qquad \text{for $0 < \lambda \leq 1$.}
  \end{equation*}
  (We notice that $F^{-1} \: [0,+\infty) \to (0,1]$ is well defined, continuous and strictly decreasing, 
  since $F$ is strictly decreasing and
  $\lim_{\lambda \to 0} F(\lambda) = +\infty$.) Then for any $k > 0$,
  there exists a constant $C > 1$ which depends only on $k$ and $g$,
  such that
  \begin{equation*}
    F^{-1}(t - k) \leq C F^{-1}(t)
    \qquad \text{for all $t \geq k$.}
  \end{equation*}
\end{lem}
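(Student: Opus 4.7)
The plan is to rewrite the inequality $F^{-1}(t-k) \leq C F^{-1}(t)$ in terms of $\lambda := F^{-1}(t)$ and $\mu := F^{-1}(t-k)$ and then to bound the ratio $\mu/\lambda$ by a constant depending only on $k$ and $g$. Since $F$ is strictly decreasing with $F(1)=0$ and since $t \geq k \geq 0$, these two values satisfy $0 < \lambda \leq \mu \leq 1$, and the identity $F(\lambda) - F(\mu) = k$ becomes
$$\int_\lambda^\mu \frac{1}{g(s)} \d s = k.$$

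I would then dichotomise on the size of $\mu/\lambda$. If $\mu \leq 2\lambda$, the constant $C=2$ already handles this pair. In the opposite case $\mu > 2\lambda$, the interval $[\lambda,2\lambda]$ is contained in $[\lambda,\mu]$, and the monotonicity of $g$ yields
$$k \;\geq\; \int_\lambda^{2\lambda} \frac{1}{g(s)} \d s \;\geq\; \frac{\lambda}{g(2\lambda)},$$
which rearranges to $g(2\lambda)/(2\lambda) \geq 1/(2k)$.

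This is precisely the place where the hypothesis $\lim_{s \to 0} g(s)/s = 0$ enters: it furnishes a threshold $s_1 > 0$ depending only on $k$ and $g$ such that $g(s)/s < 1/(2k)$ for all $s \in (0, s_1]$. Hence in this second case one must have $2\lambda > s_1$, and combining $\lambda > s_1/2$ with the trivial bound $\mu \leq 1$ gives $\mu/\lambda < 2/s_1$. The constant $C := \max(2,\,2/s_1)$ then works in both cases. I do not anticipate a serious obstacle: the only point requiring care is the choice of the dichotomy, so that the ``large ratio'' regime directly forces $\lambda$ away from zero via the sublinearity of $g$ at the origin.
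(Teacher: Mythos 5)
Your proof is correct. The chain of estimates in the ``large ratio'' case is sound: with $\lambda := F^{-1}(t)$ and $\mu := F^{-1}(t-k)$ one indeed has $0 < \lambda \le \mu \le 1$ and $\int_\lambda^\mu g(s)^{-1}\,\mathrm{d}s = k$, and when $\mu > 2\lambda$ the monotonicity of $g$ gives $k \ge \lambda/g(2\lambda)$, which the sublinearity $\lim_{s\to 0} g(s)/s = 0$ converts into the lower bound $\lambda > s_1/2$, whence $\mu/\lambda < 2/s_1$.

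Your route shares the paper's key inequality --- bounding $\int_\lambda^{c\lambda} g(s)^{-1}\,\mathrm{d}s$ from below by $(c-1)\lambda/g(c\lambda)$ and letting $g(s)/s \to 0$ make this quantity exceed $k$ --- but organizes the argument differently. The paper fixes a constant $C$ first, shows $F(C\lambda) \le F(\lambda) - k$ for all $\lambda$ below a threshold $\lambda_0$, inverts to get the claim for $t > F(\lambda_0)$, and then disposes of the remaining range $t \in [k, F(\lambda_0)]$ by a compactness argument producing an unspecified (though in principle computable) second constant. You instead dichotomize on the ratio $F^{-1}(t-k)/F^{-1}(t)$: either it is at most $2$, or the sublinearity of $g$ at the origin pins $\lambda$ away from zero and the trivial bound $\mu \le 1$ finishes the job. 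This removes the compactness step entirely and yields the single explicit constant $C = \max(2, 2/s_1)$, which is marginally more in the constructive spirit the paper advertises; the paper's version, on the other hand, isolates the reusable statement $F(C\lambda) \le F(\lambda) - k$ for small $\lambda$, which is the form in which the estimate is actually applied in Step~4 of the proof of Theorem~\ref{theo:HarrisSubgeo1}. Either proof is acceptable.
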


\begin{proof}[Proof of Lemma~\ref{lem:theta-bound}]
  We first notice that for any $C > 1$ and $0 < \lambda < 1/C$,
  \begin{equation*}
    F(C \lambda) = \int_{C \lambda}^1 \frac{1}{g(s)} \d s
    = F(\lambda) - \int_{\lambda}^{C \lambda} \frac{1}{g(s)} \d s
    \leq F(\lambda) - (C-1) \frac{\lambda}{g(C \lambda)}.
  \end{equation*}
  Using that $\lim_{\lambda \to 0} \lambda / g(C \lambda) = +\infty$,
  we may take $\lambda_0 < 1/C$ small enough so that
  \begin{equation*}
    (C-1) \frac{\lambda}{g(C \lambda)} \geq k
    \qquad
    \text{for all $0 < \lambda < \lambda_0$},
  \end{equation*}
  so
  \begin{equation*}
    F(C \lambda)
    \leq F(\lambda) - k
    \qquad
    \text{for all $0 < \lambda < \lambda_0$}.
  \end{equation*}
  so setting $\lambda := F^{-1}(t)$ for some $t > F(\lambda_0)$ we get
  \begin{equation*}
    F(C F^{-1}(t)) \leq t - k
    \qquad
    \text{for all $t > F(\lambda_0)$},
  \end{equation*}
  which after applying $F^{-1}$ gives the inequality in the lemma
  whenever $t > F(\lambda_0)$. The inequality is also clearly true,
  with some other constant $C$, for all $t \in [k, F(\lambda_0)]$,
  since this is a compact interval.
\end{proof}

\medskip We are now ready to give the proof of
Theorem~\ref{theo:HarrisSubgeo1}:

\begin{proof}[Proof of Theorem~\ref{theo:HarrisSubgeo1}]
  Let $\Nt \cdot \Nt_{V_1}$ and $\Nt \cdot \Nt_{V_2}$ denote the norms
  from Lemma \ref{lem:nonexpansive}, equivalent to $\| \cdot \|_{V_1}$
  and $\|\cdot\|_{V_2}$ respectively, defined by
  \begin{gather*}
    \Nt \mu \Nt_{V_1}
    := \| \mu \| + \beta_1 \| \mu \|_{V_1},
    \quad \forall \, \mu \in \M_{V_1},
    \\
    \Nt \mu \Nt_{V_2}
    := \| \mu \| + \beta_2 \| \mu \|_{V_2},
    \quad \forall \, \mu \in \M_{V_2},
  \end{gather*}
  with
  \begin{gather*}
    \beta_1 := (1-\gamma_H)/(K_1 N),
    \qquad
    \beta_2 := (1-\gamma_H)/(K_2 N).
  \end{gather*}
  We also take
  \begin{equation*}
    \alpha :=
    \min\{ \beta_1 (\varsigma_1 - K_1/A_1),\
    \beta_2 (\varsigma_2 - K_2/A_2)   \} > 0.
  \end{equation*}
  Take $\nu \in \mathcal{N}_{V_2}$, and denote $\nu_k := S^k \nu$ for
  integer $k \geq 0$.

  \smallskip \noindent \emph{Step 1. Uniform bound on the $V_2$ norm.} 
  Using Lemma \ref{lem:nonexpansive} for $V_2$, we can recursively define an increasing sequence of
  integers $(n_i)_{i \geq 0}$ such that $n_0 = 0$,
  $N \leq n_{i+1} - n_i \leq 2N-1$ for all $i$ and which satisfy
  \begin{equation}
    \label{eq:V2bound0}
    \Nt \nu_{n_{i+1}} \Nt_{V_2}
    \leq \Nt \nu_{n_i} \Nt_{V_2},
    \qquad i \geq 1.
  \end{equation}
On the other hand, using the weak Lyapunov condition (Hypothesis
    \ref{hyp:op_Lyapunov_weak}) and the total variation  non-expansive property 
\eqref{eq:Doeblin-contrac1}, we have 
$$
  \Nt \nu_{k+1} \Nt_{V_2} \leq  \Nt \nu_k \Nt_{V_2} +  \beta_2 K_2   \| \nu_k \| \le (1 + \beta_2K_2 ) \Nt \nu_k \Nt_{V_2}, 
$$ 
and thus 
  \begin{equation}
    \label{eq:V2bound}
    \Nt \nu_k \Nt_{V_2} \leq C_2 \Nt \nu \Nt_{V_2}, 
    \qquad \text{for all $k \geq 0$,}
  \end{equation}
  with $C_2 := (1 + \beta_2K_2)^N$.
  
  \smallskip \noindent \emph{Step 2. Decay along a subsequence.}
  Using again Lemma \ref{lem:nonexpansive}, now for $V_1$, we can
  recursively define a (possibly different) increasing sequence of
  integers $(n_i)_{i \geq 0}$ such that $n_0 = 0$,
  $N \leq n_{i+1} - n_i \leq 2N-1$ for all $i$ and which satisfy
  \begin{equation}
    \label{eq:6}
    \Nt \nu_{n_{i+1}} \Nt_{V_1}
    + \alpha \sum_{k= n_i}^{n_{i+1}-1}
    \| \nu_{k} \|_{\varphi_1(V_1)}
    \leq \Nt \nu_{n_i} \Nt_{V_1},
    \qquad i \geq 1,
  \end{equation}
  and in particular
  \begin{equation}
    \label{eq:4}
      \Nt \nu_{n_{i+1}} \Nt_{V_1}
    + \alpha \| \nu_{n_{i}} \|_{\varphi_1(V_1)}
    \leq \Nt \nu_{n_i} \Nt_{V_1},
    \qquad i \geq 1,
  \end{equation}
  where we have ignored all terms in the sum except for the first
  one. From \eqref{eq:4} and the interpolation condition
  \eqref{eq:InterpolationV1V0V3} we deduce, for any $\lambda > 0$,
  $$
  \Nt \nu_{n_{i+1}} \Nt_{V_1}
  + \lambda \alpha \| \nu_{n_{i}} \|_{V_1}
  \le
  \Nt \nu_{n_{i}} \Nt_{V_1}
  + \xi(\lambda) \alpha  \| \nu_{n_{i}} \|_{V_2}.
  $$
  We now use that the norms $\|\cdot\|_{V_1}$ and $\|\cdot\|_{V_2}$
  are equivalent, respectively, to $\Nt \cdot \Nt_{V_1}$ and
  $\Nt \cdot \Nt_{V_2}$: from the definition of $\Nt \cdot \Nt_{V_1}$,
  \begin{equation*}
    \lambda \alpha \| \nu_{n_{i}} \|_{V_1}
    \geq
    \lambda \kappa \Nt \nu_{n_{i}} \Nt_{V_1}
    \qquad \text{with $\kappa := \frac{\alpha}{1+\beta_1}$,}
  \end{equation*}
  for any $\lambda > 0$. Also, from the definition of the
  $\Nt\cdot \Nt_{V_2}$ norm and \eqref{eq:V2bound},
  $$
  \| \nu_{n_{i}} \|_{V_2}
  \leq \frac{1}{\beta_2} \Nt \nu_{n_{i}} \Nt_{V_2}
  \leq \frac{C_2}{\beta_2} \Nt \nu \Nt_{V_2}.
  $$
  The three previous estimates together imply
  \begin{equation*}
    \Nt \nu_{n_{i+1}} \Nt_{V_1}
    \leq (1 - \lambda \kappa) \Nt \nu_{n_{i}} \Nt_{V_1} +
    \frac{\xi(\lambda) C_2 \alpha}{\beta_2} \Nt \nu \Nt_{V_2},
  \end{equation*}
  for any $i\ge 0$ and for any $\lambda > 0$. Using that
  $V_1 \leq V_2$, so $\Nt \nu \Nt_{V_1} \leq \Nt \nu \Nt_{V_2}$, Lemma
  \ref{lem:dif_ineq} with $\zeta(\lambda) := \xi(\lambda/\kappa)$ and
  $M := \Nt \nu \Nt_{V_2} \max\{1, C_2 \alpha/ \beta_2\}$ then implies
  \begin{equation}
    \label{eq:decay-ki}
    \Nt \nu_{n_{i}}  \Nt_{V_1}
    \leq
    \frac{m}{\kappa} \Nt   \nu  \Nt_{V_2} \Theta(\kappa i)
    \qquad \text{for all $i \geq 1$},
  \end{equation}
  where $m := \max\{1, C_2 \alpha/ \beta_2\}$ and $\Theta$ is the decay
  rate function defined in the statement. 
  Notice that we have used that 
  $\zeta^*(s) = \xi^*(\kappa s)$ for all $s \in \R$, and that
  \begin{equation*}
    \int_{\lambda}^1 \frac{1}{\zeta^*(s)} \d s
    = \int_{\lambda}^1 \frac{1}{\xi^*(\kappa s)} \d s
    = \frac{1}{\kappa} \int_{\kappa \lambda}^\kappa \frac{1}{\xi^*(s)} \d s
    \leq \frac{1}{\kappa} F(\kappa \lambda),
  \end{equation*}
  so the decay rate in Lemma \ref{lem:dif_ineq} is bounded by the one
  given in \eqref{eq:decay-ki}.

  \smallskip \noindent \emph{Step 3. Decay along the full sequence.}
  Now we have proved this decay rate along the sequence
  $(n_i)_{i \geq 0}$.   In order to extend this to all indices $k$,
  we observe that proceeding exactly as in the proof of \eqref{eq:V2bound}, 
  we get
   \begin{equation}
    \label{eq:5}
    \Nt S^i \nu_k \Nt_{V_1} \leq C^i_1 \Nt \nu_k \Nt_{V_1}, 
    \qquad \text{for all $k , i\geq 0$,}
  \end{equation}
  with $C_1 := 1 + \beta_1K_1$. \Black
q For any $k \geq 0$, choose
  $j \geq 0$ such that $n_j \leq k < n_{j+1}$. Due to the spacing of
  the terms $n_i$, it must hold that
  \begin{equation}
    \label{eq:7}
    \lfloor k/(2N - 1) \rfloor \leq j \leq \lfloor k/N \rfloor.
  \end{equation}
  Writing $k = n_j + i$ for some $0 \leq i \leq 2N-2$, we have, using
  \eqref{eq:decay-ki}, \eqref{eq:5} and \eqref{eq:7},
  \begin{multline*}
    \Nt \nu_k \Nt_{V_1} = \Nt S^i \nu_{n_j} \Nt_{V_1}
    \leq C_1^i \Nt \nu_{n_j} \Nt_{V_1}
    \\
    \leq
    C_1^{2N-2} \frac{m}{\kappa} \Theta(\kappa j)
    \Nt \nu  \Nt_{V_2}
    \leq
    C \Theta \left( \kappa \left\lfloor
        \frac{k}{2N-1}
      \right\rfloor \right)
    \Nt \nu  \Nt_{V_2},
  \end{multline*}
  where the constant $C$ is given by
  \begin{equation*}
    C := C_1 ^{2N-2} \frac{m}{\kappa}
    = C_1 ^{2N-2} \frac{1 }{\kappa } \max\{1, C \alpha/ \beta_2\}.
  \end{equation*}

  \smallskip \noindent \emph{Step 4. Simplification of the decay
    rate.}
  We notice that
  \begin{equation*}
    \left\lfloor
      \frac{k}{2N-1}
    \right\rfloor
    \geq \frac{k}{2N-1} - 1
    \qquad
    \text{for all $k \geq 0$,}
  \end{equation*}
  so we may use Lemma \ref{lem:theta-bound} to obtain
  \begin{equation*}
    \Theta \left( \kappa \left\lfloor
        \frac{k}{2N-1}
      \right\rfloor \right)
    \leq
    \Theta \left ( \frac{\kappa k}{2N-1} - \kappa \right)
    \leq C \Theta \left ( \frac{\kappa k}{2N-1} \right)
  \end{equation*}
  for all $k \geq 2N - 1$ and some constant $C > 0$. Since the inequality
  \begin{equation*}
    \Theta \left( \kappa \left\lfloor
        \frac{k}{2N-1}
      \right\rfloor \right)
    \leq C \Theta \left ( \frac{\kappa k}{2N-1} \right)
  \end{equation*}
  is clearly also true for some (other) $C \geq 1$ and the finite set
  of integers $0 \leq k \leq 2N-1$, we obtain the form of the decay
  rate given in the statement.

  \smallskip \noindent \emph{Step 5. Decay in total variation norm.}
  In order to deduce the second estimate
  \eqref{eq:theo:HarrisSubgeo1:Estim2}, we come back to the first
  inequality in \eqref{eq:6} that we iterate and sum up in order to
  obtain, for any $0 \leq j < i$,
  $$
  \Nt \nu_{n_i} \Nt_{V_1}
  + \alpha \sum_{k=n_j}^{n_i-1} \| \nu_k \|_{\varphi_1(V_1)}  
  \le  \Nt \nu_{n_j}  \Nt_{V_1}.
  $$
  Together with the non expansion inequality
  $$
  \| \nu_{n_i} \| \le \| \nu_k \| \le
  \| \nu_k \|_{\varphi_1(V_1)}, \quad \forall \, k \leq n_i,
  $$
  and the decay proved in \eqref{eq:decay-ki}, we deduce
  $$
  \bigl( n_i - n_j \bigr) \alpha \| \nu_{n_i} \|
  \leq M \Nt \nu  \Nt_{V_2} \Theta(j).
  $$
  Choosing $j = \lfloor i/2 \rfloor$ and using that $n_i - n_j \geq N
  (j-i)$,
  $$
  \| \nu_{n_i} \|
  \leq \frac{2 M}{\alpha i} \Nt \nu  \Nt_{V_2} \Theta(\lfloor i/2 \rfloor).
  $$
  Carrying out a similar argument as above to extend this to all
  indices $k$, we obtain
  $$
  \| \nu_{k} \|
  \leq \frac{C_3}{k}
  \Theta\Big(
  \Big\lfloor \frac{k}{4N-2} \Big\rfloor
  \Big) \Nt \nu  \Nt_{V_2}, 
  $$
  for all $k \geq 1$, for some other constant $C_3 >
  0$.
  A similar reasoning as in the previous step gives the simpler form
  of the decay rate given in the statement.
\end{proof}

% =========================================================================

\subsection{Subgeometric decay rates for Feller type stochastic operator}
\label{sec:alternative}

As a consequence of Theorem \ref{theo:HarrisSubgeo1}, we can prove the
following theorem adapted to Feller type stochastic operator and which is closer to the continuous-time framework
developed in
\citet{GDF2009},  see also \citet{Hairer2016notes}.

\begin{thm}[Discrete subgeometric Harris theorem for Feller type stochastic operator]
  \label{thm:subgeometric_Harris_discrete}
  Consider a stochastic operator $S$ of Feller type such that
  \begin{enumerate}
  \item $S$ satisfies a weak Lyapunov condition (Hypothesis
    \ref{hyp:op_Lyapunov_weak}) with functions $V$, $\varphi$ and
    constants $\varsigma$, $K$.
  \item For some $N \geq 1$, $S^N$ satisfies a Harris condition
    (Hypothesis \ref{hyp:Harris}) on the set
    $\mathcal{C} := \{ \varphi(V) \leq 2 R \}$ for some
    $R > 2 K / \varsigma$.
  \end{enumerate}
  Then there exists a unique equilibrium $\mu^* \in \P_{\varphi(V)}$.
  Moreover, for any strictly concave function
  $\psi \: [1,+\infty) \to [1,+\infty)$ with $\psi(1)= \psi'(1) = 1$,
  $\lim_{v \to +\infty} \psi(v) = +\infty$, and such that
  $v \mapsto \psi' (v) \varphi(v)$ is nondecreasing and satisfies
  $\psi' (v) \varphi(v) > R$ whenever $\varphi(v) > 2R$, we have
  \begin{equation}
    \label{eq:HarrisSubgeo2}
    \| S^n \nu \| \leq
    \frac{C}{n} \Theta_{\psi} ( r n) \| \nu \|_{V}
    \qquad \text{for all $n \ge 1$},
  \end{equation}
  for any $\nu \in \cN_{V}$. Here $C \geq 1$ and $0 < r < 1$ are
  constructive constants and the
  decay rate $\Theta_\psi$ is given by
  \begin{equation*}
    {\Theta}_\psi(n) := F_\psi^{-1}( n ),
  \end{equation*}
  where
  \begin{gather*}
    F_\psi(v) := \int_v^1 \frac{1}{h(u)} \d u,
    \qquad
    h(u) := g \circ f^{-1} (u),
    \\
    f(v) := \frac{\psi(v)}{v},
    \qquad
    g(v) := \frac{\psi'(v) \varphi(v)}{v}.
  \end{gather*}
\end{thm}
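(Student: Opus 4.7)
The plan is to deduce Theorem~\ref{thm:subgeometric_Harris_discrete} from the interpolated version Theorem~\ref{theo:HarrisSubgeo1} by choosing $V_2 := V$ with $\varphi_2 := \varphi$ (the given Lyapunov data), and $V_1 := \psi(V)$ with $\varphi_1(V_1)$ identified with the weight $\psi'(V)\varphi(V)$ on $\Omega$. The tangent inequality at $v=1$ for the concave $\psi$ with $\psi(1)=\psi'(1)=1$ yields $\psi(v) \le v$ for $v \ge 1$, so $V_1 \le V_2$. Existence of an equilibrium $\mu^* \in \P_{\varphi(V)}$ will then follow from Theorem~\ref{thm:existence} applied to $(V,\varphi)$, and uniqueness from the Feller-type Corollary~\ref{lem:uniqueness}; the substantive content is the total-variation decay rate.

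For the weak Lyapunov condition for $(V_1,\varphi_1)$, I write $S = P^*$ and dualise the given condition via Remark~\ref{rem:FellerOp} to obtain $PV \le V - \varsigma\varphi(V) + K$; Lemma~\ref{lem:concave_psi_Lyapunov} applied to $\psi$ then yields $P\psi(V) \le \psi(V) - \varsigma\psi'(V)\varphi(V) + K\psi'(V) \le \psi(V) - \varsigma\psi'(V)\varphi(V) + K$, the final step using $\psi'(v) \le \psi'(1)=1$ by concavity. For the two local coupling conditions I invoke Lemma~\ref{lem:Harris&coupling}: for $\varphi_2(V_2)=\varphi(V)$, the Harris hypothesis for $S^N$ on $\mathcal{C} = \{\varphi(V) \le 2R\}$ produces a coupling with $A_2 \in (K/\varsigma, R)$, admissible since $R > 2K/\varsigma$; for $\varphi_1(V_1)=\psi'(V)\varphi(V)$, the standing assumption $\psi'(v)\varphi(v) > R$ whenever $\varphi(v) > 2R$ gives $\{\psi'(V)\varphi(V) \le R\} \subseteq \mathcal{C}$, and because the Harris lower bound on $\mathcal{C}$ automatically implies the same bound with integration over any smaller set, Lemma~\ref{lem:Harris&coupling} applies and supplies a coupling with $A_1 \in (K/\varsigma, R/2)$. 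Taking the larger of the two resulting contraction constants unifies $\gamma_H$.

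For the interpolation condition, dividing the required pointwise inequality $\lambda \psi(v) \le \psi'(v)\varphi(v) + \xi(\lambda) v$ by $v$ and substituting $u = f(v)$ reduces it to $\xi(\lambda) \ge \lambda u - h(u)$; the sharp choice is thus $\xi(\lambda) := \sup_u(\lambda u - h(u))$, the Legendre transform $h^*$. This $\xi$ is automatically increasing, and $\xi(\lambda)/\lambda \to 0$ as $\lambda \to 0$ follows from $h(0^+) = 0$ (itself a consequence of $\varphi(v)/v \to 0$ combined with $\psi'(v) \le 1$) together with $h$ being continuous and strictly positive on $(0,1]$. Theorem~\ref{theo:HarrisSubgeo1} then delivers the total-variation estimate $\|S^n\nu\| \le (C/n)\,F^{-1}(rn)\,\|\nu\|_V$ with $F(\lambda) = \int_\lambda^1 1/\xi^*(s)\,\mathrm{d}s$.

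The main obstacle is matching the rate $F^{-1}$ with the stated $\Theta_\psi = F_\psi^{-1}$: since $\xi = h^*$, its Legendre transform $\xi^*$ is the convex lower-semicontinuous envelope $h^{**}$ of $h$, so $\xi^* \le h$ and a raw application of Theorem~\ref{theo:HarrisSubgeo1} in general only produces a rate no faster than $\Theta_\psi$. To extract the sharp rate I would verify the convexity of $h$ on $(0,1]$ directly from the structural hypotheses (strict concavity of $\psi$, concavity of $\varphi$, and nondecreasingness of $\psi'(v)\varphi(v)$), giving $\xi^* = h$ and $F = F_\psi$. An alternative, independent of such convexity, is to bypass the Legendre step of Theorem~\ref{theo:HarrisSubgeo1} and replace it by a Jensen argument: the pointwise identity $\psi'(V)\varphi(V) = V\,h(\psi(V)/V)$ combined with Jensen's inequality against the probability $\mathrm{d}\widetilde{\mu} := V|\nu|/\|\nu\|_V$ produces $\|\nu_{n_i}\|_{\psi'(V)\varphi(V)} \ge \|\nu_{n_i}\|_V\, h\!\left(\|\nu_{n_i}\|_{\psi(V)}/\|\nu_{n_i}\|_V\right)$; substituting this into the difference inequality from Lemma~\ref{lem:nonexpansive}, using the uniform $V$-bound from Step~1 of the proof of Theorem~\ref{theo:HarrisSubgeo1}, and integrating via Lemma~\ref{lem:discrete_ode} produces exactly the rate $\Theta_\psi$, with the $1/n$ factor in the total-variation estimate then following as in Step~5 of that proof.
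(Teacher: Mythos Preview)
Your proof follows the paper's route exactly: set $V_2 = V$, $V_1 = \psi(V)$, define $\varphi_1$ implicitly via $\varphi_1(\psi(v)) := \psi'(v)\varphi(v)$, check the weak Lyapunov condition for $V_1$ through Lemma~\ref{lem:concave_psi_Lyapunov}, obtain both local coupling conditions from the single Harris hypothesis via the inclusion $\{\psi'(V)\varphi(V)\le R\}\subseteq\{\varphi(V)\le 2R\}$ and Lemma~\ref{lem:Harris&coupling}, and take $\xi := h^*$ for the interpolation. The paper then simply writes ``Thus we have $\xi^* = h$ and we obtain $F = F_\psi$'' with no further justification, so the biconjugate concern you raise in your final paragraph is one the paper itself glosses over rather than resolves.

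Your proposed alternative (b), however, does \emph{not} sidestep convexity. The Jensen step you invoke,
\[
\int h\!\left(\frac{\psi(V)}{V}\right) \mathrm{d}\widetilde{\mu}
\;\ge\;
h\!\left(\int \frac{\psi(V)}{V}\,\mathrm{d}\widetilde{\mu}\right),
\]
is precisely Jensen's inequality in the direction that requires $h$ to be convex; for concave $h$ the inequality reverses and the argument collapses. So routes (a) and (b) reduce to the same missing ingredient: convexity of $h = g\circ f^{-1}$ on $(0,1]$. This does hold in the polynomial and stretched-exponential examples worked out after the theorem, but neither you nor the paper supplies a general verification from the stated hypotheses on $\psi$ and $\varphi$.
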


\begin{rem}\label{rem:subgeo-stabmu*} Provided that
  $\varphi'(v) \varphi(v) \ge \Phi(\varphi(v))$ for any $v \ge 1$ for
  some concave function $\Phi : [1,\infty) \to [1,\infty)$ and $S^N$
  satisfies a Harris condition (Hypothesis \ref{hyp:Harris}) on the
  set $\{ \varphi(V) \leq 2 R \}$ for any $R > 2K/\varsigma$, the
  techniques developed here make possible to establish that $\mu^*$ is
  asymptotically stable:
  there exists a decay rate function $\widetilde\Theta$ such that for
  any $\mu \in \P_{\varphi(V)}$ there holds
  \beqn
  \label{eq:EstimStabSubGeo}
  \| S^n \mu - \mu^* \| \le
  \widetilde\Theta (n) \| \mu - \mu^* \|_{\varphi(V)},
  \quad \forall
  \, n \ge 1.
  \eeqn
  Defining indeed the weight function
  $W:=\varphi(V)$, Lemma~\ref{lem:concave_psi_Lyapunov} implies that
  $$
  PW + \varsigma\Phi(W) \le W + K, 
  $$
  which is nothing but saying that $S$ satisfies a weak Lyapunov
  condition (Hypothesis \ref{hyp:op_Lyapunov_weak} and
  Remark~\ref{rem:FellerOp}) with functions $W$, $\Phi$ and constants
  $\varsigma$, $K$.  Because of the above strong Harris condition,
  Theorem~\ref{thm:subgeometric_Harris_discrete} holds with $W$ for a
  family of rate functions $\widetilde\Theta$. For any on them, we may
  apply Theorem~\ref{thm:subgeometric_Harris_discrete} with
  $\nu := \mu - \mu^* \in \P_W$ and deduce \eqref{eq:EstimStabSubGeo}.
\end{rem}

\begin{proof}[Proof of Theorem \ref{thm:subgeometric_Harris_discrete}]
  First, we notice that $f$ is invertible since $v \mapsto \psi(v)/v$
  is strictly decreasing, as can be seen from
  \begin{equation*}
    \frac{\mathrm{d}}{\mathrm{d}v}
    \frac{\psi(v)}{v}
    = \frac{v \psi'(v) - \psi(v)}{v^2} < 0
    \qquad \text{for $v > 1$,}
  \end{equation*}
  since $v \psi'(v) < \psi(v)$ for $v > 1$ due to the strict concavity
  of $\psi$ and the fact that $\psi'(1)=1$.

  In order to show the result we use Theorem \ref{theo:HarrisSubgeo1}
  with
  \begin{equation*}
    V_2 := V,
    \quad
    V_1 := \psi(V).
  \end{equation*}
  Let us check the assumptions of Theorem
  \ref{theo:HarrisSubgeo1}. First, the weak Lyapunov conditions
  \eqref{eq:weak_Lyapunov_implicit} and \eqref{eq:weak_Lyapunov_dual}
  are satisfied for $V_2 = V$ by assumption. In order to see that a
  weak Lyapunov condition holds also for $V_1 = \psi(V)$, use that
  $\psi$ is concave to write, with Jensen's inequality
  \eqref{eq:jensen2} and \eqref{eq:weak_Lyapunov_dual},
  \begin{multline*}
    P \psi(V)
    \leq \psi( V - \varsigma \varphi(V) + K )
    \leq \psi(V) - \varsigma \psi'(V)\varphi(V) + \psi'(V) K
    \\
    \leq \psi(V) - \varsigma \psi'(V)\varphi(V) + K
    = \psi(V) - \varsigma \varphi_1(\psi(V)) + K,
  \end{multline*}
  where we make the choice
  \begin{equation*}
    \varphi_1(\psi(v)) := \psi'(v)\varphi(v).
  \end{equation*}
  Notice that $\varphi_1$ is a nondecreasing function with
  $\varphi_1(w) \geq \varphi_1(1) = 1$.  Observing that
  $$
  P\psi(V \wedge n) \le P\psi(V) \le \psi(V) - \varsigma \varphi_1(\psi(V)) + K, 
  $$
 by   duality for any $0 \le \mu \in \M_{\psi(V)}$, we have 
  $$
  \| S \mu \|_{\psi(V \wedge n)} =   \int  \mu P(V \wedge n) \le \int  \mu  (\psi(V) - \varsigma \varphi_1(\psi(V)) + K).
  $$
  By Beppo Levi theorem, we may pass to the limit in the above inequality and we obtain that the weak Lyapunov condition \eqref{eq:weak_Lyapunov_implicit} also holds for $V_1 = V$.
Notice that both weak Lyapunov
  conditions for $V_1$ and $V_2$ hold with the same constants $\varsigma$
  and $K$.

  The Harris condition for $S^N$ is satisfied on the set
  \begin{equation*}
    \tilde{\mathcal{C}} = \{x \in \Omega \mid
    \varphi_1(\psi(v)) \leq R \}, 
  \end{equation*}
  since by hypothesis the condition
  $\varphi_1(\psi(v)) = \psi'(v) \varphi(v) \leq R$ implies
  $\varphi(v) \leq 2 R$, so
  $\tilde{\mathcal{C}} \subseteq \mathcal{C}$. Lemma
  \ref{lem:Harris&coupling} shows that $S^N$ satisfies the local
  coupling condition (Hypothesis \ref{hyp:local_coupling}) for
  $\varphi_1(\psi(V))$ (and hence for $\varphi(V)$, which is larger),
  both with any constant $A < R/2$. Since $R > 2K / \varsigma$ we may
  take $A > K/\varsigma$,  and the hypotheses of Theorem
  \ref{theo:HarrisSubgeo1} are met. 
  In order to express the conclusion of Theorem~\ref{theo:HarrisSubgeo1}, 
  we observe that the interpolation function $\xi$ in \eqref{eq:InterpolationV1V0V3}
  can be written more explicitly in the present case where 
$
    V_1 = \psi(V_2)
$
  for some function $\psi \: [1, +\infty) \to [1,+\infty)$ with
  $\psi(v)/v$ strictly decreasing. Indeed, in that case, the interpolation is
  equivalent to
  \begin{equation}
    \label{eq:9}
    \xi(\lambda) \geq \lambda f(v) - g(v)
    \qquad \text{for all $v \geq 1$,}
  \end{equation}
  where
  \begin{equation*}
    f(v) := \psi(v) / v,
    \qquad
    g(v) := \varphi_1(\psi(v)) / v.
  \end{equation*}
  Substituting $v = f^{-1}(z)$ in \eqref{eq:9}, $\xi(\lambda)$ must satisfy
  \begin{equation*}
    \xi(\lambda) \geq \lambda z - g(f^{-1}(z))
    \qquad \text{for all $0 < z \leq 1$,}
  \end{equation*}
  so we can choose
  \begin{equation*}
    \xi(\lambda) := h^*(\lambda),
    \quad \text{where $h \: (0,1] \to \R$ is given by $h(z) := g (f^{-1}(z))$.}
  \end{equation*}
  Thus we have $\xi^* = h$ and we obtain $F = F_\psi$ in the conclusion of  
  Theorem~\ref{theo:HarrisSubgeo1}.
\end{proof}

There remains the question of choosing the function $\psi$ which gives
an optimal decay rate $\widetilde{\Theta}_\psi$. There are two
``extreme'' choices for $\psi$: one can take $\psi$ asymptotically
like
\begin{equation*}
  H(u) := \int_1^u \frac{1}{\varphi(v)} \d v
\end{equation*}
(so that $\psi'(v) \varphi(v)$ behaves like a constant as $v \to
+\infty$); or one can take $\psi(v)$ almost equal to $v$ (but still
strictly concave). These are both useful in different cases, as we
show now in examples:

\paragraph{Polynomial decay. } 
Let us take
\begin{equation*}
  \psi(u) := 1 + \int_1^u \frac{m(v)}{\varphi(v)} \d v,
  \qquad u \geq 1,
\end{equation*}
for some continuous, nondecreasing $m \: [1,+\infty) \to [1,+\infty)$ such that $v
\mapsto m(v) / \varphi(v)$ is strictly decreasing, $m(1)=1$, and with
\begin{equation*}
  \text{$m(v) > R$ \quad whenever \quad $\varphi(v) > 2R$.}
\end{equation*}
It is possible to find such $m$, since one may take
\begin{equation*}
  m(v) :=
  \begin{cases}
    \varphi(v)^{1-\epsilon} \qquad &\text{if $\varphi(v) < 2R$,}
    \\
    (2R)^{1-\epsilon} \qquad &\text{if $\varphi(v) \geq 2R$,}
  \end{cases}
\end{equation*}
for small enough $\epsilon > 0$. The quantities in Theorem
\ref{thm:subgeometric_Harris_discrete} can then be bounded as follows:
\begin{equation*}
  g(u) = \frac{1}{u} \psi'(u) \varphi(u)  = \frac{m(u)}{u}
  \geq \frac{1}{u}.
\end{equation*}
\begin{equation*}
  F_\psi(\zeta) = \int_\zeta^1 \frac{1}{h(u)} \d u
  \leq
  \int_\zeta^1 f^{-1}(\xi) \d \xi.
\end{equation*}
For example, if $\varphi(v) = v^{1-\alpha}$, $\alpha \in (0,1)$, we obtain (with $C$
standing for a positive constant)
\begin{gather*}
  \psi(u) \leq 1 + C u^\alpha,
  \qquad
  f(u) = \frac{\psi(u)}{u} \leq C u^{\alpha-1},
  \qquad
  f^{-1}(\xi) \leq C \xi^{\frac{1}{\alpha-1}},
  \\
  F_\psi(\zeta) \leq C (\zeta^{\frac{\alpha}{\alpha-1}} - 1),
  \qquad
  F_\psi^{-1}(t) \leq (1 + C t)^{1 - \frac{1}{\alpha}}.
\end{gather*}

As a conclusion, in that case, the rate of convergence \eqref{eq:HarrisSubgeo2} is 
  \begin{equation}
    \label{eq:HarrisSubgeo2-poly}
    \| S^n \nu \| \leq
    \frac{C}{n^{1/\alpha}}  \| \nu \|_{V}
    \qquad \text{for all $n \ge 1$},
  \end{equation}
  for any $\nu \in \cN_{V}$ and some explicitly computable constant $C  \geq 1$.

\paragraph{Exponential decay.} On the other hand, when  $\varphi(u) := {u / (\log u)^{\alpha}}$, $\alpha > 0$, we take $\psi(u) = u^\kappa$, $0 < \kappa < 1$. 
We next compute 
$$
f(v) = v^{\kappa-1}, \quad g(v) = \kappa v^{\kappa-1} (\log v)^{-\alpha}, \quad h (u) = C_1 u (\log u^{-1})^{-\alpha},
$$
for a constant $C_1 = C_1(\alpha,\kappa) \in (0,\infty)$, and finally 
$$
F(v) = C_2 (\log v^{-1})^{\alpha + 1}, \quad F^{-1}(u) = e^{-C_3  u^{1 \over \alpha +1}}, 
$$
for some constants  $C_i = C_i(\alpha,\kappa) \in (0,\infty)$. As a conclusion, in that case, the rate of convergence \eqref{eq:HarrisSubgeo2} is 
  \begin{equation}
    \label{eq:HarrisSubgeo2-expo}
    \| S^n \nu \| \leq C e^{-\lambda n^{1 \over \alpha +1}}  \| \nu \|_{V}
    \qquad \text{for all $n \ge 1$},
  \end{equation}
  for any $\nu \in \cN_{V}$ and some explicitly computable constants $C  \geq 1$, $\lambda \in (0,\infty)$.

% =========================================================================

\section{Results for continuous-time semigroups}
\label{sec:time-continuous}

In this section we again address the speed of relaxation to
equilibrium, this time in the framework of continuous-time
semigroups. The most straightforward results are obtained by applying
the discrete-time results in the previous sections to any stochastic
semigroup $(S_t)_{t \geq 0}$ as long as $S_T$ satisfies the needed
assumptions for some $T > 0$. We will state these results first. Then,
in the setting of continuous-time semigroups it is perhaps more
natural to look for similar Foster-Lyapunov-type conditions on the
generator of the semigroup instead of conditions on $S_T$ for a given
$T > 0$. Our main aim in this section is to prove results of this
type. We notice that they can be obtained as consequences of our
discrete-time results both in the geometric and subgeometric cases.

\subsection{Geometric convergence}
\label{sec:time-continuous-geometric}

First we state Doeblin's Theorem~\ref{thm:Doeblin}, as applied to a
continuous semigroup, with a straightforward proof:

\begin{thm}[Semigroup version of Doeblin's theorem]
  \label{thm:Doeblin-semigroup}
  Let $(S_t)_{t \geq 0}$ be a stochastic semigroup in $\M$. If there
  exists $T > 0$ such that $S_T$ satisfies the Doeblin
  condition \eqref{eq:Doeblin} then the semigroup $(S_t)_{t \geq 0}$ has a unique
  equilibrium $\mu^*$ in $\P$, and
  \begin{equation}
    \label{eq:Doeblin-semigroup-decay}
    \| S_t   \nu  \| \leq
    \frac{1}{1-\alpha} e^{- \lambda t} \| \nu \|,
    \qquad \text{for all $t \geq 0$},
  \end{equation}
  for all $\nu \in \cN$, where
  \begin{equation*}
    \lambda := -\frac{\log (1-\alpha)}{T} > 0.
  \end{equation*}
\end{thm}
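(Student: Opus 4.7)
The plan is to reduce to the discrete-time Doeblin Theorem~\ref{thm:Doeblin} applied to the single operator $S_T$, and then interpolate the estimate over the intervals of length $T$ using the non-expansivity \eqref{eq:Doeblin-contrac1}.

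First, since $S_T$ satisfies the Doeblin condition by hypothesis, Theorem~\ref{thm:Doeblin} gives a unique stationary state $\mu^* \in \P$ of $S_T$ and the contractive estimate $\|S_T^n \nu\| \le \gamma^n \|\nu\|$ for every $\nu \in \cN$ and $n \in \N$, with $\gamma := 1-\alpha \in (0,1)$. I would then check that $\mu^*$ is invariant for the whole semigroup: for any $t \ge 0$, the semigroup property gives $S_T(S_t \mu^*) = S_{T+t} \mu^* = S_t(S_T \mu^*) = S_t \mu^*$, so $S_t \mu^* \in \P$ is a fixed point of $S_T$, and by uniqueness in Theorem~\ref{thm:Doeblin} it must equal $\mu^*$. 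Similarly, any equilibrium of $(S_t)_{t \ge 0}$ in $\P$ is a fortiori a fixed point of $S_T$, hence equals $\mu^*$; this gives uniqueness.

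For the decay rate, I would take $\nu \in \cN$ and any $t \ge 0$, and write $t = nT + s$ with $n \in \N$ and $s \in [0,T)$. Because each $S_r$ is mass preserving, $S_s \nu \in \cN$; combining the discrete contraction with the general non-expansivity \eqref{eq:Doeblin-contrac1},
\begin{equation*}
\|S_t \nu\| \;=\; \|S_T^n (S_s \nu)\| \;\le\; \gamma^n \|S_s \nu\| \;\le\; \gamma^n \|\nu\|.
\end{equation*}
Setting $\lambda := -\log(1-\alpha)/T = -\log\gamma / T$ and using $nT = t - s \ge t - T$, we get $\gamma^n = e^{-\lambda nT} \le e^{\lambda T} e^{-\lambda t} = \gamma^{-1} e^{-\lambda t} = (1-\alpha)^{-1} e^{-\lambda t}$, which yields exactly \eqref{eq:Doeblin-semigroup-decay}.

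There is no real obstacle here: the argument is pure bookkeeping around the Euclidean division $t = nT + s$. The only points worth stating carefully are (a) that zero-mean is preserved by every $S_s$, so the contraction from Theorem~\ref{thm:Doeblin} applies to $S_s \nu$; and (b) that no continuity of $t \mapsto S_t \mu$ is needed, since the estimate is pointwise in $t$ and rests only on the semigroup property plus the two inequalities $\|S_T^n \nu\| \le \gamma^n \|\nu\|$ on $\cN$ and $\|S_s \mu\| \le \|\mu\|$ on $\M$.
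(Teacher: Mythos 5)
Your proposal is correct and follows essentially the same route as the paper: reduce to Theorem~\ref{thm:Doeblin} for $S_T$, use the commutation $S_T S_t \mu^* = S_t S_T \mu^*$ to upgrade the fixed point of $S_T$ to an invariant measure of the whole semigroup, and interpolate the discrete contraction via the non-expansivity \eqref{eq:Doeblin-contrac1}. The only cosmetic difference is that you factor $S_t = S_T^n \circ S_s$ (applying $S_s$ first, hence your correct remark that $S_s\nu \in \cN$) whereas the paper factors $S_t = S_{t-kT} \circ S_{kT}$; both give the same constant.
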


\begin{proof}[Proof of Theorem~\ref{thm:Doeblin-semigroup}]
  Theorem \ref{thm:Doeblin} shows that the operator $S_{T}$ has a
  unique stationary state in $\P$, which we call $\mu^*$. In fact, $\mu^*$
  is a stationary state of the whole semigroup since, for all $s \geq
  0$, we have
  \begin{equation*}
     S_{T} S_{s} \mu^* = S_{s} S_{T} \mu^* = S_{s} \mu^*,
   \end{equation*}
   which shows that $S_{s} \mu^*$ (which is again a probability measure)
   is also a stationary state of $S_{T}$. Due to uniqueness, we deduce
   \begin{equation*}
     S_s \mu^* = \mu^*
     \qquad \text{for all $s \geq 0$}.
   \end{equation*}
   This stationary state is clearly unique in $\P$, since any
   stationary state of $(S_t)_{t \geq 0}$ is in particular a
   stationary state of $S_{T}$.

   In order to show \eqref{eq:Doeblin-semigroup-decay}, for any
   $\nu \in \cN$ and any $t \geq 0$ we write
$
     k := \lfloor{t/T}\rfloor,
$
   where $\lfloor \cdot \rfloor$ is the floor function, so that
   \begin{equation*}
     \frac{t}{T} - 1 < k \leq \frac{t}{T}.
   \end{equation*}
   Then,
  \bean
   \| S_t \nu \|
&=& \| S_{t - k T} S_{k  T} \nu \| \leq
     \| S_{k T} \nu \|
  \\
     &\leq&
     (1-\alpha)^{k} \| \nu \|
     \leq
     \frac{1}{1-\alpha}
     \exp\left( 
       \frac{t \log (1-\alpha)}{T} \right) \| \nu \|,
\eean  
   which is nothing but \eqref{eq:Doeblin-semigroup-decay}.     
\end{proof}

As above, we could write the immediate counterpart of Harris's Theorem
\ref{thm:Harris}, as applied to a continuous semigroup.  We rather
present a version more adapted to a semigroup setting. Indeed, in the
continuous time setting, it is natural to consider Foster-Lyapunov
conditions on the generator $\Lambda$ of the semigroup
$(S_t)_{t \geq 0}$.  A natural assumption that replaces the operator
Lyapunov condition in Hypothesis \ref{hyp:op_Lyapunov} is
\begin{equation}
  \label{eq:Lyapunov-generator}
  LV \leq -\sigma V + b,
\end{equation}
for some constants $\sigma, b > 0$ and some continuous weight
(Lyapunov) function $V \: \Omega \to [1,+\infty)$, where $L$ is dual
to the generator $\Lambda$. When $(S_t)_{t \geq 0}$ is of Feller type,
we have $\Lambda = L^*$ and $L$ is the generator of the associated
Markov-Feller semigroup $(P_t)_{t \geq 0}$ on $C_0(\Omega)$.  This
faces the technical problem that the generator $L$ may not be defined
on the particular functions $V$ we wish to consider.  However, for
$0 \le \mu \in \M_V$, we may compute at least formally
\beqn\label{eq:dtStmu=} {d \over dt} \int V S_t \mu = \int LV S_t \mu
\le \int (-\sigma V + b) S_t \mu.  \eeqn After time integration, we
thus get (still formally)
\begin{equation}
  \label{eq:Lyapunov-generator-mild-S}
\int V S_t \mu  \le \int V \mu + \int_0^t \!\! \int (-\sigma V + b)  S_s \mu \d s, 
\end{equation}
for all $t \geq 0$ and all $0 \le \mu \in \M_V$. Equation
\eqref{eq:Lyapunov-generator-mild-S} is a common way to understand the
generator Lyapunov condition \eqref{eq:Lyapunov-generator} and thus to
avoid the difficulty of defining $LV$. Let us also notice that this
problem has been circumvented in different ways in other works: for
diffusion semigroups, the generator is a local operator which is
naturally defined on arbitrary $\mathcal{C}^2$ functions $V$
\citep{BGL14}; for general semigroups, one may define
\eqref{eq:Lyapunov-generator} to mean that the process
$V(X_t) - \int_0^t (-\sigma V(X_s) + b) \d s$ is a supermartingale for
every starting condition $x_0$ (where $(X_t)_{t \geq 0}$ is the
process associated to the semigroup $(S_t)_{t \geq 0}$), a path which
is taken for example in \citet{GDF2009, Hairer2016notes}. This
probabilistic formulation is equivalent to saying that the associated
Markov-Feller semigroup $(P_t)_{t \geq 0}$ satisfies \begin{equation}
  \label{eq:Lyapunov-generator-mild}
  P_t V \leq V + \int_0^t P_s (-\sigma V + b) \d s,
  \qquad \text{for $t \geq 0$,}
\end{equation}
or equivalently, that the associated stochastic semigroup
$(S_t)_{t \geq 0}$ satisfies \eqref{eq:Lyapunov-generator-mild-S}.
Another possible alternative formulation, which we will not use in
this work, is to require that \eqref{eq:dtStmu=} effectively holds for
any positive $\mu$ which belongs to the domain $D_V(L)$ defined by
$$
D_V(L) := \Bigl\{ \mu \in \M_V; \, \lim_{t \searrow 0} \int{S_t \mu - \mu \over t} \phi \ \hbox{ exists, for any } \ \phi \in C(\Omega), 
\ \phi/V \ \hbox{bounded} \Bigr\},
$$
provided that this one is dense in $\M_V$.

One can take this a step further by observing that, again at least
formally, one may use Gronwall's lemma on
\eqref{eq:Lyapunov-generator-mild-S} in order to deduce the bound
\begin{equation}
  \label{eq:Lyapunov-semigroup-S}
    \| S_t \mu \|_V \leq e^{-\sigma t} \| \mu \|_V + \frac{b}{\sigma}
    (1- e^{-\sigma t}) \| \mu \|
    \qquad \text{for all $t \geq 0$ and all $\mu \in \M_v$.}
  \end{equation}
  In a first result, we
  choose to avoid these technical problems altogether and state as an
  assumption the specific consequence we need from either
  \eqref{eq:Lyapunov-generator} or \eqref{eq:Lyapunov-generator-mild},
  which is the following:

\begin{hyp}[Semigroup Lyapunov]
  \label{hyp:semigroup_Lyapunov}
  Let $V \: \Omega \to [1,+\infty)$ be a measurable function and
  $(S_t)_{t \geq 0}$ a stochastic semigroup on $\M_V$. We say the
  semigroup $(S_t)_{t \geq 0}$ satisfies the \emph{semigroup Lyapunov
    condition} with function $V$ when there exist constants $\sigma, b
  > 0$ such that \eqref{eq:Lyapunov-semigroup-S} holds. 
\end{hyp}
We will later give a specific hypothesis on the dual generator $L$
which ensures this condition holds (see Hypothesis
\ref{hyp:generator_Lyapunov}). However, we believe it is useful to
state the basic condition in Hypothesis \ref{hyp:semigroup_Lyapunov},
since in concrete applications it may well happen that
\eqref{eq:Lyapunov-semigroup-S} can be proved in some other way.
 
 \begin{thm}[Semigroup version of Harris's theorem]
  \label{thm:Harris-semigroup}    Let $V \: \Omega \to [1,+\infty)$ be a measurable (weight) function
  and let $(S_t)_{t \geq 0}$ be a stochastic semigroup in
  $\M_V$. Assume that
  \begin{enumerate}
  \item The semigroup $(S_t)_{t \geq 0}$ satisfies the semigroup
    Lyapunov condition (Hypothesis \ref{hyp:semigroup_Lyapunov}).
    
  \item For some $T > 0$, $S_T$ satisfies the local coupling condition
    (Hypothesis \ref{hyp:local_coupling}) with ${b}/A < \sigma$.
  \end{enumerate}
  Then the semigroup has an invariant probability measure
  $\mu^* \in \P_V$ which is unique within $\P_V$, and there exist
  $\lambda, C > 0$ such that
  \begin{equation}
    \label{eq:Harris-semigroup-decay}
    \| S_t \nu \|_{V} \leq
    C e^{-\lambda t} \| \nu \|_{V},
    \qquad \text{for $t \geq 0$},
  \end{equation}
  for all $\nu \in \cN_V$.
\end{thm}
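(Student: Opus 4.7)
The plan is to reduce the statement to the discrete-time Harris Theorem~\ref{thm:Harris} applied to the single operator $S_T$, and then upgrade the obtained discrete geometric decay to a continuous one using the semigroup growth provided by Hypothesis~\ref{hyp:semigroup_Lyapunov}.

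First, I would check that $S_T$ satisfies the operator Lyapunov condition (Hypothesis~\ref{hyp:op_Lyapunov}) with weight $V$. Setting $t = T$ in \eqref{eq:Lyapunov-semigroup-S} gives
\begin{equation*}
  \| S_T \mu \|_V \leq \gamma_L \| \mu \|_V + K \|\mu\|,
  \qquad \gamma_L := e^{-\sigma T} \in (0,1),
  \qquad K := \frac{b}{\sigma} (1 - e^{-\sigma T}).
\end{equation*}
The compatibility condition $K/A < 1 - \gamma_L$ becomes $b/(\sigma A) < 1$, which is exactly the assumption $b/A < \sigma$. Hence Theorem~\ref{thm:Harris} applies to $S_T$ and yields a unique $\mu^* \in \P_V$ with $S_T \mu^* = \mu^*$, together with constants $C \geq 1$ and $\gamma \in (0,1)$ such that $\| S_T^n \nu \|_V \leq C \gamma^n \| \nu \|_V$ for all $\nu \in \cN_V$ and $n \in \N$.

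Next, I would promote $\mu^*$ to an invariant probability measure of the full semigroup by the standard commutation trick: for any $s \geq 0$, $S_T(S_s \mu^*) = S_s(S_T \mu^*) = S_s \mu^*$, and $S_s \mu^* \in \P_V$ thanks to Hypothesis~\ref{hyp:semigroup_Lyapunov}, so by the uniqueness part of Theorem~\ref{thm:Harris} we have $S_s \mu^* = \mu^*$. Uniqueness within $\P_V$ for the full semigroup is immediate since any invariant $\tilde \mu \in \P_V$ of $(S_t)$ is in particular invariant under $S_T$.

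Finally, I would prove the continuous-time geometric decay \eqref{eq:Harris-semigroup-decay}. For $\nu \in \cN_V$ and any $s \in [0,T]$, applying \eqref{eq:Lyapunov-semigroup-S} to $\nu$ (using $\| \nu \| \leq \| \nu \|_V$) gives
\begin{equation*}
  \| S_s \nu \|_V
  \leq e^{-\sigma s} \| \nu \|_V + \frac{b}{\sigma}(1 - e^{-\sigma s}) \| \nu \|
  \leq \Bigl( 1 + \frac{b}{\sigma} \Bigr) \| \nu \|_V =: M \| \nu \|_V.
\end{equation*}
Writing $t = nT + s$ with $n := \lfloor t/T \rfloor$ and $s \in [0,T)$, and noting that $S_{nT} \nu \in \cN_V$ since $S_{nT}$ preserves mass, we get
\begin{equation*}
  \| S_t \nu \|_V
  = \| S_s S_{nT} \nu \|_V
  \leq M \, \| S_T^n \nu \|_V
  \leq M C \gamma^n \| \nu \|_V
  \leq \frac{MC}{\gamma} e^{-\lambda t} \| \nu \|_V,
\end{equation*}
with $\lambda := -(\log \gamma)/T > 0$, which is \eqref{eq:Harris-semigroup-decay}.

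The proof is essentially routine once the correct interface between the semigroup Lyapunov bound \eqref{eq:Lyapunov-semigroup-S} and the discrete operator Lyapunov condition is identified; the only mildly subtle point is verifying that the compatibility condition $K/A < 1-\gamma_L$ at time $T$ reduces precisely to the time-independent assumption $b/A < \sigma$, so that the argument produces constants independent of the chosen $T$ beyond the one given by the coupling hypothesis.
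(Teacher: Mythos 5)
Your proposal is correct and follows essentially the same route as the paper: reduce to Theorem~\ref{thm:Harris} applied to $S_T$ (checking that $b/A<\sigma$ is exactly the compatibility condition $K/A<1-\gamma_L$), use the commutation trick $S_T S_s\mu^*=S_s\mu^*$ for invariance of the whole semigroup, and interpolate to all times via $t=nT+s$. The only cosmetic difference is that you control the intermediate times $s\in[0,T)$ directly from the semigroup Lyapunov bound \eqref{eq:Lyapunov-semigroup-S} (giving $M=1+b/\sigma$), whereas the paper invokes the growth estimate \eqref{eq:V-semigroup-growth} from the definition of a stochastic semigroup on $\M_V$; both are valid.
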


\begin{proof}[Proof of Theorem~\ref{thm:Harris-semigroup}]
  Hypothesis \ref{hyp:semigroup_Lyapunov} shows that the operator
  Lyapunov condition (Hypothesis \ref{hyp:op_Lyapunov}) holds for
  $S_T$, since
    \begin{equation*}
    \| S_T \mu \|_V \leq e^{-\sigma T} \| \mu \|_V + \frac{b}{\sigma}
    (1- e^{-\sigma T}) \| \mu \|, 
  \end{equation*}
  for all $\mu \in \M_V$. The condition ${b}/A < \sigma$ hence ensures
  $S_T$ is in the conditions of Theorem \ref{thm:Harris}.
  
    With the same reasoning as in the proof of Theorem~\ref{thm:Doeblin-semigroup},
   we see that $(S_t)_{t \geq 0}$ has a
  unique stationary state in $\P_V$, which we call $\mu^*$. 
  We know from Theorem \ref{thm:Harris} that there exist a new norm $\Nt \cdot \Nt_{V}$ 
  (defined through \eqref{eq:newNormV} and a parameter $\beta > 0$, which is equivalent to the norm $\| \cdot \|_{V}$)
   and $0 < \gamma <  1$ such that
  \begin{equation*}
 \Nt S_{T} \nu \Nt_{V}    \leq \gamma \Nt   \nu \Nt_{V}, 
  \end{equation*}
  for all measures $\nu \in \cN_V$.
  In order to show
  \eqref{eq:Harris-semigroup-decay}, we follow a similar reasoning as
  in the proof of Theorem \ref{thm:Doeblin-semigroup}. Notice first
  that due to \eqref{eq:Doeblin-contrac1} and \eqref{eq:V-semigroup-growth} we have, for
  $0 \leq t \leq T$,
  \begin{equation}
    \label{eq:beta-semigroup-growth}
   \Nt S_t \mu \Nt_V
    \leq C_V e^{\omega_V T}  \Nt \mu \Nt_V, 
      \end{equation}
  for all measures $\mu \in \M_V$.  We conclude that \eqref{eq:Harris-semigroup-decay} holds with
    \begin{equation*}
    C := \frac{C_V e^{\omega_VT} }{\gamma} \frac{1+\beta}{\beta},
    \qquad
    \lambda := -\frac{\log \gamma}{T} > 0,
  \end{equation*}
  and $\gamma, \beta$ are the constants in Theorem \ref{thm:Harris} as
  applied to the operator $S_{T}$.
   \end{proof}
 
   We end this section by noticing that in the case
   of a Feller-type semigroup, Hypothesis \ref{hyp:semigroup_Lyapunov}
   is a consequence of the following condition on the dual generator
   $L$ of the associated Markov-Feller semigroup $(P_t)_{t \geq 0}$ on
   $C_0(\Omega)$. Hence Theorem
   \ref{thm:Harris-semigroup} also holds it $(S_t)_{t \geq 0}$ is a
   Feller-type stochastic semigroup which satisfies the following
   hypothesis instead of Hypothesis \ref{hyp:semigroup_Lyapunov}:
 
\begin{hyp}[Generator Lyapunov]
  \label{hyp:generator_Lyapunov}
  Let
  $(S_t)_{t \geq 0}$ be a Feller-type stochastic semigroup on $\M_V$. We say the
  semigroup $(S_t)_{t \geq 0}$ satisfies the \emph{generator Lyapunov
    condition} with function $V$ when there exist constants $\sigma, b
  > 0$ such that \eqref{eq:Lyapunov-generator-mild-S} holds. 
\end{hyp}

The fact that in the case of a Feller-type stochastic semigroup,
Hypothesis \ref{hyp:generator_Lyapunov} implies Hypothesis
\ref{hyp:semigroup_Lyapunov} is a straightforward consequence of the
following version of Gronwall's lemma:

\begin{lem}[Gronwall lemma]
  \label{lem:Feller-Harris-semigroup}
  Consider $(S_t)_{t \geq 0}$ a Feller-type stochastic semigroup in
  $\M_V$ which satisfies the generator Lyapunov condition (Hypothesis
  \ref{hyp:generator_Lyapunov}) associated to $V$ and some constants
  $\sigma,b > 0$.  Then $(S_t)_{t \geq 0}$ satisfies the corresponding
  \emph{semigroup Lyapunov condition} (Hypothesis
  \ref{hyp:semigroup_Lyapunov}) with the same function $V$ and
  constants $\sigma, b > 0$.
\end{lem}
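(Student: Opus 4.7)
The plan is to reduce the statement to a scalar integral Gronwall inequality satisfied by $f(t) := \|S_t\mu\|_V$ for a nonnegative $\mu$, and then to recover the signed case by the standard domination $|S_t \mu| \le S_t |\mu|$.

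First I would handle the reduction to positive measures. For any signed $\mu \in \M_V$, linearity and positivity of $S_t$ yield $|S_t \mu| = |S_t \mu_+ - S_t \mu_-| \le S_t \mu_+ + S_t \mu_- = S_t |\mu|$, so that $\|S_t \mu\|_V \le \|S_t|\mu|\|_V$ and $\||\mu|\| = \|\mu\|$; hence it is enough to prove \eqref{eq:Lyapunov-semigroup-S} when $\mu \ge 0$. In that case $S_s \mu \ge 0$ and $\|S_s \mu\| = \|\mu\|$ for all $s \ge 0$ by mass preservation, so Hypothesis~\ref{hyp:generator_Lyapunov} rewrites as the purely scalar inequality
\begin{equation*}
  f(t) \le f(0) - \sigma \int_0^t f(s) \, \d s + b \|\mu\| \, t, \qquad t \ge 0.
\end{equation*}
Note that $f$ is automatically measurable: the very fact that the right-hand side of \eqref{eq:Lyapunov-generator-mild-S} is well defined forces $s \mapsto \int V S_s \mu$ to be measurable, by Fubini--Tonelli.

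The second step is the standard integral Gronwall trick. Setting $I(t) := \int_0^t f(s) \, \d s$, which is absolutely continuous with $I' = f$ almost everywhere, the previous inequality rearranges to
\begin{equation*}
  I'(t) + \sigma I(t) \le f(0) + b \|\mu\| \, t, \qquad \text{for a.e.\ } t \ge 0.
\end{equation*}
Multiplying by $e^{\sigma t}$ recognises the left-hand side as $(e^{\sigma t} I(t))'$; integrating from $0$ to $t$, dividing by $e^{\sigma t}$, and feeding the resulting explicit bound on $I(t)$ back into the scalar inequality for $f$ should produce, after elementary simplifications,
\begin{equation*}
  f(t) \le e^{-\sigma t} f(0) + \frac{b}{\sigma}\bigl(1 - e^{-\sigma t}\bigr) \|\mu\|,
\end{equation*}
which is exactly \eqref{eq:Lyapunov-semigroup-S} for nonnegative $\mu$, and hence for all $\mu \in \M_V$ by the first step.

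I do not expect any genuine obstacle here, since both reduction and Gronwall are classical. The only mildly subtle point is that one must work with $f$ merely measurable rather than continuous (the hypothesis is purely integral in nature, and $(S_t)$ is not assumed continuous in the $\|\cdot\|_V$ norm), but this is precisely what the integral version of Gronwall's lemma is designed to handle. The Feller-type assumption itself plays no role in the calculation beyond justifying the phrasing of Hypothesis~\ref{hyp:generator_Lyapunov}.
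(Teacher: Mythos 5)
Your reduction to nonnegative measures via $|S_t\mu|\le S_t|\mu|$ and the resulting scalar inequality $f(t) \le f(0) - \sigma\int_0^t f(s)\,\d s + b\|\mu\|\,t$ are fine, but the Gronwall step after that does not go through, and the gap is not merely technical. Setting $I(t):=\int_0^t f$, the a.e.\ inequality $I'+\sigma I \le f(0)+b\|\mu\|t$ yields an \emph{upper} bound $I(t)\le J(t):=\int_0^t e^{-\sigma(t-s)}(f(0)+b\|\mu\|s)\,\d s$; but to conclude from $f(t)\le f(0)+b\|\mu\|t-\sigma I(t)$ you would need a \emph{lower} bound on $I(t)$, since $I$ enters with a negative sign. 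The target estimate is exactly $f(t)\le f(0)+b\|\mu\|t-\sigma J(t)$, so what you actually get is $f(t)\le \bigl[f(0)+b\|\mu\|t-\sigma J(t)\bigr]+\sigma\bigl(J(t)-I(t)\bigr)$ with an uncontrolled nonnegative error term. The failure is genuine: a nonnegative function vanishing everywhere except at $t=0$ and at a single $t_0>0$, where it takes the value $f(0)$, satisfies the one-time integral inequality for every $t$ (the integral term vanishes identically), yet violates \eqref{eq:Lyapunov-semigroup-S} at $t_0$ as soon as $f(0)>b\|\mu\|/\sigma$. The integral form of Gronwall's lemma requires a kernel of the correct sign; here the kernel is $-\sigma<0$.

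Two further ingredients are needed, and both appear in the paper's proof. First, the semigroup property upgrades the hypothesis to a two-time inequality $f(t_2)+\sigma\int_{t_1}^{t_2}f\le f(t_1)+b\|\mu\|(t_2-t_1)$ for all $t_1\le t_2$ (Step 1 there), which rules out the pathology above. Second, one still needs some regularity of $f$ to convert this into a usable differential inequality: the paper proves that $t\mapsto\|S_t\mu\|_V$ is right-continuous (Step 2), and this uses precisely the Feller property (weak-$*$ continuity of trajectories gives lower semicontinuity, while the Lyapunov inequality gives upper semicontinuity from the right). So, contrary to your closing remark, the Feller assumption is essential and not cosmetic. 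With these in hand the paper passes to a differential inequality in the sense of distributions, mollifies, and only then applies the classical Gronwall lemma (Steps 3--4). Your explicit reduction to nonnegative $\mu$ is a correct and useful observation that the paper leaves implicit, but the core of the argument is missing.
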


%OBSERVE THAT the following are equivalent: 
%
%- integrale version between $t_1$ and $t_2$ of the generator Lyapunov condition;
%
%- integrale version between $0$ and $t$ of the generator Lyapunov condition;
%
%- weak differential version of the generator Lyapunov condition and $\langle S_t \mu, V \rangle$ is c\`ad

We observe that the difficulty in proving this result is that there is
no reason why the function $t \mapsto \int \mu_t V$ should be
continuous, which makes it difficult to apply standard results on
differential inequalities, which usually require a continuous
function.

\begin{proof}[Proof of Lemma~\ref{lem:Feller-Harris-semigroup}]
We fix $0 \le \mu_0 \in \M_V$ and we set $\mu_t := S_t \mu_0$. 
We split the proof into four steps. 

\smallskip\noindent
{\sl Step 1. } We first observe that the Lyapunov condition \eqref{eq:Lyapunov-generator-mild-S} is equivalent to 
\begin{equation}
  \label{eq:LyapInEqInt}
\int \mu_{t_2} V + \sigma \int_{t_1}^{t_2} \!\! \int \mu_s V ds \le  \int \mu_{t_1} V + b \int_{t_1}^{t_2} \!\! \int \mu_s   ds, 
 \end{equation}
for any $t_2 > t_1 \geq 0$ and all $0 \le \mu \in \M_V$. The inequality \eqref{eq:Lyapunov-generator-mild-S} is indeed
a particular case of \eqref{eq:LyapInEqInt} and the reciprocal implication is an immediate consequence of
the semigroup property of $(S_t)_{t \ge 0}$.

\smallskip\noindent
{\sl Step 2. } We claim that
\beqn\label{eq:cad}
t \mapsto  \int V \mu_t \ \hbox{ is c\`ad}.
\eeqn
We recall that because $(S_t)_{t \geq 0}$ is of Feller-type, there holds 
\beqn\label{eq:C0St}
t \mapsto \int \mu_t \chi  \in C(\R_+;\R_+), 
\eeqn
for any $\chi \in C_c(\Omega)$, and even for any $\chi \in C_b(\Omega)$. 
On the one hand, as a consequence of \eqref{eq:C0St}, for any $\chi \in C_c(\Omega)$, $\chi \le V$, there holds 
$$
\int \mu_t \chi = \lim_{s \to t} \int \mu_s \chi \le \liminf_{s \to t} \int \mu_s V.
$$
Choosing $\chi_n \nearrow V$, the monotone convergence theorem implies
\beqn\label{eq:cad1} \int \mu_tV = \lim_{n \to \infty} \int \mu_t
\chi_n \le \liminf_{s \to t} \int \mu_s V.  \eeqn On the other hand,
the semigroup Lyapunov condition \eqref{eq:LyapInEqInt} implies
$$ \int V \mu_{s} \le \int V \mu_{t} + b (s-t) \int \mu_0, \quad
\forall \, s > t.
$$
We deduce that 
\beqn\label{eq:cad2}
\limsup_{s \searrow t} \int V \mu_{s} \le \lim_{s \searrow t} \Bigl\{ \int V \mu_{t} + b (s-t) \int \mu_0 \Bigr\} = \  \int V \mu_{t}. 
\eeqn
Equations \eqref{eq:cad1} and \eqref{eq:cad2} together imply \eqref{eq:cad}. 
 
\smallskip\noindent
{\sl Step 3. } We claim that the Lyapunov
condition \eqref{eq:Lyapunov-generator-mild-S} (or equivalently
\eqref{eq:LyapInEqInt}) is equivalent to the fact that \eqref{eq:cad}
holds together with
\begin{equation}
\label{eq:LyapEDO}
\ddt \int V \mu_t  \le
- \sigma  \int  V \mu_t + b \int   \mu_t, 
\end{equation}
in the sense of $\D'(0,+\infty)$, the space of distributions on
$(0,+\infty)$. On the one hand, if we assume \eqref{eq:LyapInEqInt}
holds then \eqref{eq:cad} holds from Step 2. Multiplying equation
\eqref{eq:LyapInEqInt} by a nonnegative test function
$\varphi \in \D(0,+\infty)$, dividing it by $t_2-t_1$, integrating and
passing to the limit as $t_2 \searrow t_1$ (and using \eqref{eq:cad}
to do this), we deduce \eqref{eq:LyapEDO}.

On the other hand, if we assume that both \eqref{eq:cad} and
\eqref{eq:LyapEDO} hold, in particular this means that
\beqn\label{eq:LyapEDOBis} \int_0^\infty \int V \mu_s \phi'_s \d s +
\sigma \int_0^\infty \phi_s \int V \mu_s \d s \le b \int_0^\infty
\phi_s \int \mu_s \d s, \eeqn for any $0 \le \phi \in
\D(0,+\infty)$.
For $t > 0$ and a given function $0 \le \rho \in \D(\R_+)$ with
integral $1$ and $\supp \rho \subset (0,1)$, define the sequence
$(\phi_n)$ by $\phi_n(0) := 0$ and
$\phi'_n (s) := n \rho(s/n) - n \rho((s-t)/n)$. We may pass to the
limit $n \to \infty$ in \eqref{eq:LyapEDOBis} by taking advantage of
\eqref{eq:cad}, and we conclude \eqref{eq:Lyapunov-generator-mild-S}.

 \smallskip\noindent
{\sl Step 4. } 
We introduce a mollifier $(\rho_\eps)$ with $\supp (\rho_\eps) \subset (-\eps,0)$ and the function 
$$
u_\eps (t)  = (\| \mu \|_V * \rho_\eps)(t) = \int_\R \| \mu_s \|_V \rho_\eps(t-s) \, ds,
$$
which clearly satisfies $u_\eps \in C^1$. From \eqref{eq:LyapEDO}, $u_\eps$ also clearly satisfies  
$$
u'_\eps \le - \sigma u_\eps + K \int   \mu_0, 
$$
pointwise on $(0,\infty)$. 
From the classical version of the Gronwall lemma, we  deduce that 
$$
u_\eps(t_2) \le e^{-\sigma (t_2-t_1)} u_\eps(t_1)  + {K \over \sigma} (1-e^{-\sigma (t_2-t_1)}) \int \mu_{0} , 
$$
for any $t_2 > t_1 > 0$ and any $\eps > 0$. 
Observing that $u_\eps(t) \to \| \mu_t\|_V$ as $\eps \to 0$ for any $t \ge 0$ because of \eqref{eq:cad}, we obtain that the semigroup Lyapunov condition  \eqref{eq:Lyapunov-semigroup-S} holds for any $t = t_2 > 0$,
by passing to the limit  $\eps \to 0$ and next $t_1 \to 0$. 
\end{proof}

\subsection{Subgeometric convergence}
\label{sec:Harris-generator}

\quad\ As we have just done for the geometric case, one may state
analogous results to Theorems~\ref{theo:HarrisSubgeo1} or
\ref{thm:subgeometric_Harris_discrete} in the case of a continuous
semigroup $(S_t)_{t \geq 0}$, as long as the conditions of the
theorems are satisfied by $S_T$ for some time $T > 0$. However, the
conditions in Theorems~\ref{theo:HarrisSubgeo1} and
\ref{thm:subgeometric_Harris_discrete} are not so natural for a
continuous semigroup, since they involve estimates for $S_T$ and
powers of $S_T$.

We will avoid these statements and give more
convenient conditions in terms of the semigroup, in
the spirit of Hypothesis \ref{hyp:semigroup_Lyapunov}, 
and in terms of the generator of the semigroup, in
the spirit of Hypothesis \ref{hyp:generator_Lyapunov}.

A natural weak counterpart of the  Lyapunov condition \eqref{eq:Lyapunov-generator} consists in assuming that 
\begin{equation}
  \label{eq:weak-Lyapunov-generator}
  L V \leq -\sigma \varphi(V) + b,
\end{equation}
for some measurable weight (Lyapunov) function
$V \: \Omega \to [1,+\infty)$, some concave function
$\varphi\: [1,+\infty) \to [1,+\infty)$ and some constants
$\sigma, b > 0$, where $L$ is adjoint to the generator $\Lambda$ of
$(S_t)_{t \ge 0}$. This runs into the same technical problems
discussed before Hypothesis \ref{hyp:semigroup_Lyapunov}, so we will
again use the consequence we would like to extract as an assumption,
and leave it to be checked in each specific application.  Proceeding
similarly from \eqref{eq:weak-Lyapunov-generator} as for
\eqref{eq:Lyapunov-generator}, we may formally compute
\begin{equation}
  \label{eq:weak-gen-Lyapunov-hyp}
  \|S_t \mu \|_V \leq
  \| \mu \|_V
  + \int_0^t ( b \|S_u \mu\| -   \sigma \| S_u \mu \|_{\varphi(V)}) \, \d u, 
\end{equation}
for any $0 \le \mu \in \M_V$ and any $t \ge 0$. For a Feller-type
semigroup, this condition is equivalent to conditions (3.1) or (3.2)
in \citet{GDF2009}.  We can take one more step and deduce from
\eqref{eq:weak-gen-Lyapunov-hyp} (still formally) the weak confinement counterpart of
\eqref{eq:Lyapunov-semigroup-S}.   As we will establish for a
Feller-type stochastic semigroup (see
Corollary~\ref{cor:weak-gen-Gronwall} below), a natural consequence is
\begin{equation}
  \label{eq:weak-SG-Lyapunov-hyp}
  \| S_t \mu \|_{V} + \sigma t \|S_t \mu\|_{\varphi(V)}
  \leq \|\mu\|_{V} +  K_t   \|\mu\|,
\end{equation}
for all $t \ge 0$ and $\mu \in \M_{V}$, with
$K_t := t b (1 + \sigma t/2)$.  We then take this last property as the
assumption we impose on the semigroup:
\begin{hyp}[Weak semigroup Lyapunov condition]
  \label{hyp:weak_semigroup_Lyapunov}
  We say that a stochastic semigroup $(S_t)_{t \ge 0}$ satisfies the
  weak semigroup Lyapunov condition for a weight function
  $V \: \Omega \to [1,+\infty)$ and a concave function
  $\varphi \: [1,+\infty) \to [1,+\infty)$, $\varphi(v) \le v$ for any
  $v \ge 1$, if there exist constants $b, \sigma > 0$ such that
  \eqref{eq:weak-SG-Lyapunov-hyp} holds.
\end{hyp}

The following continuous-time analogue of Theorem
\ref{theo:HarrisSubgeo1} is our main result in this setting:
 
\begin{thm}[subgeometric Harris, interpolated version]
  \label{th:sHarris-interpolated}
  Consider two measurable weight functions $V_1, V_2 \: \Omega \to [1,+\infty)$, $V_1 \le V_2$, 
  and  a stochastic semigroup $(S_t)_{t \geq 0}$ on
  $\M_{V_2}$  such that:
  \begin{enumerate}
  \item the weak semigroup Lyapunov condition
    (Hypothesis \ref{hyp:weak_semigroup_Lyapunov}) holds for both weights $V_1$
    and $V_2$, with functions and constants $\varphi_1$, $b_1$, $\sigma_1$ and
    $\varphi_2$, $b_2$, $\sigma_2$.
    
  \item For some time $T>0$, $S_T$ satisfies the local coupling
    condition (Hypothesis \ref{hyp:local_coupling}) for both
    $\varphi_1(V_1)$ and $\varphi_2(V_2)$, with constants
   $A_1 > K_1/\sigma_1$ and $A_2 > K_2 / \sigma_2$. 
    
  \item  The interpolation condition \eqref{eq:InterpolationV1V0V3} holds for some $\xi$. \Black  
  \end{enumerate}
  Then there exists a unique equilibrium $\mu^* \in \P_{\varphi_2(V_2)}$, 
  and there exists some
  $C > 0$ depending only on the constants in the assumptions such that
  \begin{equation*}
    \| S_t \nu \|_{V_1} \lesssim  C \Theta(t) \| \nu \|_{V_2}, \quad \forall \, t \ge 0, 
  \end{equation*}
  and 
  \begin{equation*}
    \| S_t \nu \|  \lesssim  C \widetilde \Theta (t) \| \nu \|_{V_2}, \quad \forall \, t \ge 0, 
  \end{equation*}
  for any $\nu \in \cN_V$, where
    \begin{equation*}
      \Theta(t) := F^{-1} (t),
      \qquad
      \widetilde{\Theta}(t) := \frac{1}{t} F^{-1}(\frac{t}{2}),
      \qquad
      F(v) := \int_v^1 \frac{1}{\xi^*(u)} \d u.
  \end{equation*}
\end{thm}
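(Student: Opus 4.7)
The plan is to reduce this continuous-time theorem to the discrete-time Theorem~\ref{theo:HarrisSubgeo1} applied to the sampled operator $S := S_T$, where $T$ is the time in hypothesis~(2). Specialising the weak semigroup Lyapunov condition \eqref{eq:weak-SG-Lyapunov-hyp} to $t = T$ gives, for $i = 1, 2$,
\begin{equation*}
\| S_T \mu \|_{V_i} + (\sigma_i T)\,\| S_T \mu \|_{\varphi_i(V_i)} \leq \| \mu \|_{V_i} + K_{i,T}\| \mu \|,
\end{equation*}
which plays the role of the weak operator Lyapunov condition (Hypothesis~\ref{hyp:op_Lyapunov_weak}) for $S_T$ with constants $\varsigma_i' := \sigma_i T$ and $K_i' := K_{i,T}$. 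The local coupling assumption on $S_T$ is hypothesis~(2) applied with $N = 1$ in Theorem~\ref{theo:HarrisSubgeo1}, and the interpolation condition \eqref{eq:InterpolationV1V0V3} is inherited unchanged. A cosmetic discrepancy is that the Lyapunov inequality above features $\|S_T\mu\|_{\varphi_i(V_i)}$ on the left rather than $\|\mu\|_{\varphi_i(V_i)}$; inspection of the proof of Lemma~\ref{lem:nonexpansive} shows that a one-step shift of summation index compensates for this, so the same contraction-plus-sum estimate still holds, provided $A_i > K_{i,T}/(\sigma_i T)$, which is the quantitative meaning of the hypothesis.

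Theorem~\ref{theo:HarrisSubgeo1} applied to $S_T$ then yields a unique equilibrium $\mu^* \in \P_{\varphi_2(V_2)}$ (existence via Theorem~\ref{thm:existence}, uniqueness via Corollary~\ref{cor:uniqueness} or Corollary~\ref{lem:uniqueness} in the Feller case), together with
\begin{equation*}
\| S_T^n \nu \|_{V_1} \leq C' \Theta(r' n) \| \nu \|_{V_2}, \qquad \| S_T^n \nu \| \leq \frac{C'}{n}\Theta(r' n) \| \nu \|_{V_2}.
\end{equation*}
That $\mu^*$ is invariant under the whole semigroup follows by the Doeblin-semigroup argument from the proof of Theorem~\ref{thm:Doeblin-semigroup}: $S_s \mu^*$ is again a fixed point of $S_T$ by commutativity of the semigroup, hence equals $\mu^*$ by uniqueness. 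To bridge the discrete times $nT$ to arbitrary $t \geq 0$, write $t = nT + s$ with $0 \leq s < T$. The weak semigroup Lyapunov at time $s \leq T$ together with $V_i \geq 1$ gives $\| S_s \mu \|_{V_i} \leq (1 + K_{i,T})\|\mu\|_{V_i}$, while $\| S_s \mu \| \leq \|\mu\|$ by the total-variation non-expansive property; applied to $\mu = S_{nT}\nu$, these together with the discrete decay yield the two claimed estimates once Lemma~\ref{lem:theta-bound} is invoked to absorb the shift $nT \rightsquigarrow t$ (using $n \geq t/T - 1$, and $1/n \lesssim 1/t$ for $t \geq 2T$, with small $t$ handled directly by the growth bound).

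The only genuinely delicate step is the mismatch between $\|S_T\mu\|_{\varphi(V)}$ in the weak semigroup Lyapunov and $\|\mu\|_{\varphi(V)}$ in the discrete Hypothesis~\ref{hyp:op_Lyapunov_weak}: it requires a careful rerun of the proof of Lemma~\ref{lem:nonexpansive} to verify that the new norm $\Nt\cdot\Nt_{V_i}$ and the constants $\alpha_i, \beta_i$ still match up correctly, and that the trichotomy argument in the two cases of that lemma still goes through after shifting indices. Beyond that bookkeeping, the remaining steps---verification of the hypotheses of Theorem~\ref{theo:HarrisSubgeo1} for $S_T$, the Doeblin-type extension of invariance to the whole semigroup, and the interpolation via Lemma~\ref{lem:theta-bound}---are routine.
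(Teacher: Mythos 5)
Your overall strategy --- sample the semigroup, invoke Theorem~\ref{theo:HarrisSubgeo1}, and interpolate back to continuous time --- is the right one and is the paper's strategy too, but there are two genuine gaps.

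First, the choice of sampling time. You take the one-step operator to be $S_T$ itself ($N=1$ in Theorem~\ref{theo:HarrisSubgeo1}), which forces the condition $A_i > K_{i,T}/(\sigma_i T) = \frac{b_i}{\sigma_i}\bigl(1+\sigma_i T/2\bigr)$. This is \emph{strictly stronger} than the stated hypothesis $A_i > K_i/\sigma_i$, and for large $T$ or large $\sigma_i$ it can fail even when the hypothesis holds; it is not ``the quantitative meaning of the hypothesis.'' The paper instead samples at $t_0 := T/N$ and chooses $N$ large enough that $\frac{K_i}{\sigma_i}(1+\sigma_i t_0/2) < A_i$ (equation \eqref{eq:10}); the coupling hypothesis is then used for $S_{t_0}^N = S_T$, so the integer $N$ in Theorem~\ref{theo:HarrisSubgeo1} is large, not $1$. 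Without this subdivision your argument only proves the theorem under a stronger assumption than stated.

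Second, the implicit/explicit Lyapunov mismatch is not cosmetic, and you do not actually resolve it. Hypothesis~\ref{hyp:op_Lyapunov_weak} has $\|\mu\|_{\varphi(V)}$ on the left while \eqref{eq:weak-SG-Lyapunov-hyp} has $\|S_{t_0}\mu\|_{\varphi(V)}$; the dichotomy in Lemma~\ref{lem:nonexpansive} (whether $\|\nu_k\|_{\varphi(V)} \geq A\|\nu_k\|$) and the invocation of the coupling condition both use the \emph{same} index on both sides, so a ``one-step shift of summation index'' does not obviously restore the argument, and you defer the verification rather than carry it out. The paper's device is Lemma~\ref{lem:implicit-explicit}: replace the weight by $\widetilde V_i := \frac{1}{1+\sigma_i}(V_i + \sigma_i\varphi_i(V_i))$ with $\widetilde\varphi_i(\widetilde V_i) := \varphi_i(V_i)$, which converts the implicit inequality into the exact form of Hypothesis~\ref{hyp:op_Lyapunov_weak}. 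This change of weight then requires re-deriving the interpolation condition, which goes through with $\widetilde\xi(\lambda) := (1+\sigma_2)\xi(\lambda)$ since $\lambda\widetilde V_1 \le \lambda V_1$ and $V_2 \le (1+\sigma_2)\widetilde V_2$ --- a step absent from your proposal. Your treatment of the existence/uniqueness of $\mu^*$ and the extension from times $nt_0$ to all $t$ via the growth bound and Lemma~\ref{lem:theta-bound} matches the paper.
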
  

\smallskip 
Before coming to the proof of Theorems \ref{th:sHarris-interpolated}, we present a technical 
result.

\begin{lem}
  \label{lem:implicit-explicit}
  Let $V \: \Omega \to [1,+\infty)$ be a weight function and
  $\varphi \: [1,+\infty) \to [1,+\infty)$ be a continuous and increasing
  function with $\varphi(1)=1$. Let $S$ be a stochastic operator which
  satisfies the following ``implicit'' Lyapunov-type condition:
  \begin{equation}
    \label{eq:implicit-Lyapunov}
    \| S \mu \|_{V} + \sigma \|S \mu\|_{\varphi(V)}
    \leq \|\mu\|_{V} + K \|\mu\|,
    \qquad \text{for all $\mu \in \P \cap \M_{V}$}
  \end{equation}
  for some $K, \sigma > 0$. If we define
  $\widetilde{V} \: \Omega \to [1,+\infty)$ and
  $\widetilde{\varphi} \: [1, +\infty) \to [1,+\infty)$ by
  \begin{equation*}
    \widetilde{V} := \frac{1}{1 + \sigma}(V + \sigma \varphi(V)),
    \qquad
    \widetilde{\varphi}(\widetilde{V})
    := \varphi(V)
  \end{equation*}
  then $\widetilde{\varphi}$ is increasing,
  $\widetilde{\varphi}(1)=1$, and $S$ satisfies  a usual weak
  explicit Lyapunov condition for $\widetilde{V}$ and
  $\widetilde{\varphi}$, namely 
  \begin{equation}
    \label{eq:explicit-Lyapunov}
    \| S \mu \|_{\widetilde{V}}
    + \frac{\sigma}{1+\sigma} \|\mu\|_{\widetilde{\varphi}(\widetilde{V})}
    \leq \|\mu\|_{\widetilde{V}} + \frac{K}{1+\sigma} \|\mu\|,
    \qquad \text{for all $\mu \in  \M_{V}$}.
  \end{equation}
\end{lem}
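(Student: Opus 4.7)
The plan is purely algebraic: the new weight $\widetilde V$ is a pointwise function of the old weight $V$, so integrating produces a linear identity relating $\|\cdot\|_{\widetilde V}$, $\|\cdot\|_V$ and $\|\cdot\|_{\varphi(V)}$, and \eqref{eq:explicit-Lyapunov} then drops out of a one-line rearrangement of \eqref{eq:implicit-Lyapunov}. No limiting argument, semigroup tool, or Jensen-type inequality is required.

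First I would check that $\widetilde\varphi$ is well defined. Setting $h(v) := (v + \sigma\varphi(v))/(1+\sigma)$, we have $\widetilde V = h\circ V$ pointwise, and since $\varphi$ is continuous and nondecreasing with $\varphi(1)=1$, the map $h$ is continuous and strictly increasing (the $v$ term already is), with $h(1)=1$ and values in $[1,+\infty)$. Hence $h^{-1}$ exists and is continuous and increasing on the range of $h$, so $\widetilde\varphi := \varphi\circ h^{-1}$ is a well-defined increasing map into $[1,+\infty)$ with $\widetilde\varphi(1)=1$, and by construction it satisfies the identity $\widetilde\varphi(\widetilde V) = \varphi(V)$ pointwise on $\Omega$.

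Next, I would integrate the scalar identity $(1+\sigma)\widetilde V = V + \sigma\varphi(V)$ against $|\mu|$ and $|S\mu|$, which produces
\begin{equation*}
  (1+\sigma)\|\mu\|_{\widetilde V} = \|\mu\|_V + \sigma\|\mu\|_{\varphi(V)},
  \qquad
  (1+\sigma)\|S\mu\|_{\widetilde V} = \|S\mu\|_V + \sigma\|S\mu\|_{\varphi(V)}.
\end{equation*}
The second relation turns \eqref{eq:implicit-Lyapunov} into the single bound $(1+\sigma)\|S\mu\|_{\widetilde V} \le \|\mu\|_V + K\|\mu\|$. Solving the first relation for $\|\mu\|_V$ and substituting, together with $\|\mu\|_{\varphi(V)} = \|\mu\|_{\widetilde\varphi(\widetilde V)}$, yields exactly \eqref{eq:explicit-Lyapunov} after dividing by $1+\sigma$ and rearranging. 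I do not anticipate any genuine obstacle; the only minor point deserving a brief remark is that all the quantities in the manipulation are finite, which is the case because $\mu\in\M_V$ gives $\|\mu\|_{\widetilde V}\le\|\mu\|_V<\infty$, and the implicit hypothesis (first applied to $\mu_\pm/\|\mu_\pm\|$ and extended to $\M_V$ by linearity on the Hahn--Jordan decomposition) controls $\|S\mu\|_{\varphi(V)}$.
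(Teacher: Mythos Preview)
Your proof is correct and follows essentially the same route as the paper: both verify that $h(v)=(v+\sigma\varphi(v))/(1+\sigma)$ is strictly increasing so that $\widetilde\varphi=\varphi\circ h^{-1}$ is well defined, and then obtain \eqref{eq:explicit-Lyapunov} from the one-line identity $(1+\sigma)\|S\mu\|_{\widetilde V}=\|S\mu\|_V+\sigma\|S\mu\|_{\varphi(V)}\le\|\mu\|_V+K\|\mu\|=(1+\sigma)\|\mu\|_{\widetilde V}-\sigma\|\mu\|_{\varphi(V)}+K\|\mu\|$. Your extra remark on passing from $\mu\in\P\cap\M_V$ to general $\mu\in\M_V$ via the Hahn--Jordan decomposition is a welcome clarification that the paper leaves implicit.
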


\begin{proof}[Proof of Lemma~\ref{lem:implicit-explicit}]
  First, note that $\widetilde{\varphi}$ is well defined and
  $\widetilde{\varphi}(1) = 1$, since
  $V \mapsto \frac{1}{1 + \sigma}(V + \sigma \varphi(V))$ is a
  strictly increasing function which takes the value $1$ for
  $V=1$. Using \eqref{eq:implicit-Lyapunov} and the definition of
  $\widetilde{V}$ we have, for any $\mu \in  \M_{V}$,
  \begin{multline*}
    (1+\sigma) \| S \mu \|_{\widetilde{V}}
    = \|S \mu\|_V + \sigma \|S \mu\|_{\varphi(V)}
    \\
    \leq
    \| \mu \|_V + K \|\mu \|
    = (1+\sigma) \|\mu\|_{\widetilde{V}}
    - \sigma \|\mu\|_{\varphi(V)} + K \|\mu\|.
  \end{multline*}
  Due to the definition of $\widetilde{\varphi}$, this is precisely
  \eqref{eq:explicit-Lyapunov}.
\end{proof}

\bigskip

\begin{proof}[Proof of Theorem \ref{th:sHarris-interpolated}]
  Let us show that the conditions of Theorem \ref{theo:HarrisSubgeo1}
  are met by $S_{t_0}$ for a certain $t_0 > 0$. First, for any
  $t_0 > 0$, we have the following implicit Lyapunov-type inequalities
  by assumption:
  \begin{equation*}
  \| S_{t_0} \mu \|_{V_i} + \sigma_i t_0 \|S_{t_0} \mu\|_{\varphi_i(V_i)}
    \leq \|\mu\|_{V_i} + K_it_0 (1 +  \sigma_i t_0/2) \|\mu\|,
  \end{equation*}
  for $i = 1, 2$ and all $\mu \in \P \cap \M_{V_i}$. We may define
  \begin{equation*}
    \widetilde{V}_i := \frac{1}{1 + \sigma_i}(V_i + \sigma_i \varphi_i(V_i)),
    \qquad
    \widetilde{\varphi}_i(\widetilde{V}_i)
    := \varphi(V_i),
  \end{equation*}
  and we know from Lemma \ref{lem:implicit-explicit} that we also have
  the weak Lyapunov condition:
  \begin{equation}
    \label{eq:wL}
    \| S_{t_0} \mu \|_{\widetilde{V}_i} + \frac{\sigma_i t_0}{1+
      \sigma_i t_0} \|\mu\|_{\widetilde{\varphi}_i(\widetilde{V}_i)}
    \leq \|\mu\|_{\widetilde{V}_i}
    + \frac{K_i t_0}{1 + \sigma_i t_0} (1 + t_0/2) \|\mu\|.
  \end{equation}
  Choose an integer $N > 0$ and take $t_0 := T/N$. We can choose $N$
  large enough so that
  \begin{equation}
    \label{eq:10}
   \frac{K_i}{\sigma_i} (1 +  \sigma_i t_0 / 2) < A_i, 
    \quad \text{for $i=1,2$,}
  \end{equation}
  and then all hypotheses of Theorem \ref{theo:HarrisSubgeo1} are
  satisfied by the operator $S_{t_0}$, since
  \begin{enumerate}
  \item $S_{t_0}$ satisfies the weak Lyapunov condition \eqref{eq:wL}
    for $\widetilde{V}_1$, $\widetilde{\varphi}_1$ and
    $\widetilde{V}_2$, $\widetilde{\varphi}_2$.
  \item $S_{t_0}^N = S_T$ satisfies the local coupling condition for
    both $\widetilde{\varphi}_1$ and $\widetilde{\varphi}_2$, with
    constants which satisfy the appropriate inequality thanks to
    \eqref{eq:10}.
    
  \item The interpolation condition \eqref{eq:InterpolationV1V0V3}
    and the assumption that $\varphi_1(V_1) \leq V_1$ show that
    \begin{multline*}
      \lambda \widetilde{V}_1
      = \frac{\lambda}{1 + \sigma_1} (V_1 + \sigma_1 \varphi_1(V_1))
      \\
      \leq \lambda V_1 \leq \varphi_1(V_1) + \xi(\lambda) V_2
      \leq \widetilde{\varphi}_1(\widetilde{V}_1)
      + (1+\sigma_2)\xi(\lambda) \tilde{V}_2.
    \end{multline*}
    Hence the interpolation condition is satisfied for
    $\widetilde{\xi}(\lambda) := (1+\sigma_2) \xi(\lambda)$.
  \end{enumerate}
  Applying Theorem \ref{theo:HarrisSubgeo1} gives an estimate of the
  decay of $\| S_{t} \mu \|_V$ and $\| S_{t} \mu\|$ for $t = n
  t_0$. The same technique used before in the proof of Theorem
  \ref{thm:Doeblin-semigroup} allows us to extend the decay to
    the whole semigroup and obtain the result.
\end{proof}

We end this section by specifying Harris' theorem  to the case of a Feller-type semigroup for which some
simplifications occur. In this setting, the relevant confinement condition writes: 

\begin{hyp}[Weak generator Lyapunov condition]
  \label{hyp:weak_generator_Lyapunov}
  We say that a  Feller-type stochastic   semigroup $(S_t)_{t \ge 0}$ satisfies the weak generator Lyapunov condition for a
  weight continuous function $V \: \Omega \to [1,+\infty)$ if there exist
  constants $b, \sigma > 0$ and a continuous function
  $\varphi \: [1,+\infty) \to [1,+\infty)$ such that
  \eqref{eq:weak-gen-Lyapunov-hyp} holds.
\end{hyp}

In much the same way as in Section \ref{sec:subgeometric-discrete},
using Theorem \ref{th:sHarris-interpolated} we can prove the following
result, which is a close relative of the main result in
\citet{GDF2009}:

\begin{thm}[Subgeometric Harris]
  \label{thm:sHarris}
  Consider a Feller-type stochastic semigroup $(S_t)_{t \geq 0}$ on $\M_{V}$ which
  satisfies both the the weak generator Lyapunov condition for a
  continuous weight function $V$ (Hypothesis \ref{hyp:weak_generator_Lyapunov}) and the Harris
  irreducibility condition (Hypothesis \ref{hyp:Harris}) on the set
  $\mathcal{C} := \{ x\in \Omega \mid V(x) \leq R\}$, for large enough
  $R$. Then, there exists a unique equilibrium
  $\mu^* \in \P_{\varphi(V)}$, and there exist some constructive
  constant $C > 0$ and  a decay rate function
  $ \widetilde \Theta$ such that
  $$
  \| S_t \nu \|  \leq   \widetilde \Theta (t)  \| \nu \|_{V}, \quad \forall \, t \ge 0, 
  $$
  for any $\nu \in \cN_V$, where $\widetilde \Theta (t) := C \Theta_\psi(r t)/t$ with the notations of Theorem~\ref{thm:subgeometric_Harris_discrete}.
\end{thm}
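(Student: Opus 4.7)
The plan is to mirror the proof of Theorem~\ref{thm:subgeometric_Harris_discrete} in the continuous-time setting, using Theorem~\ref{th:sHarris-interpolated} as the workhorse in place of Theorem~\ref{theo:HarrisSubgeo1}. Concretely, I will apply Theorem~\ref{th:sHarris-interpolated} with the two weights $V_2 := V$ and $V_1 := \psi(V)$, where $\psi$ is the strictly concave function from the statement (with $\psi(1)=\psi'(1)=1$, $\psi(v)\to\infty$, and $v\mapsto\psi'(v)\varphi(v)$ nondecreasing and tending to infinity). The companion sublinear functions are $\varphi_2 := \varphi$ and $\varphi_1$ defined via $\varphi_1(\psi(v)) := \psi'(v)\varphi(v)$, exactly as in the discrete case.

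First I would verify the two weak semigroup Lyapunov conditions (Hypothesis~\ref{hyp:weak_semigroup_Lyapunov}). For $(V_2,\varphi_2)=(V,\varphi)$ this follows from Hypothesis~\ref{hyp:weak_generator_Lyapunov} by integration in time and application of a Gronwall-type argument patterned on Lemma~\ref{lem:Feller-Harris-semigroup} (splitting into right-continuity of $t\mapsto\int V S_t\mu$, a distributional form of the inequality, and mollification), yielding the concrete bound \eqref{eq:weak-SG-Lyapunov-hyp}. For $(V_1,\varphi_1)=(\psi(V),\psi'(V)\varphi(V))$, the key input is Lemma~\ref{lem:concave_psi_Lyapunov} which, combined with the duality $S=P^*$, upgrades the generator inequality $LV \le -\sigma\varphi(V)+b$ into $L\psi(V)\le -\sigma\psi'(V)\varphi(V)+b\psi'(V)\le -\sigma\varphi_1(\psi(V))+b$ (using $\psi'\le 1$). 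Integrating and applying the same Gronwall-type argument yields the desired weak semigroup Lyapunov condition for the second weight. As in the proof of Corollary~\ref{lem:uniqueness}, an approximation with bounded concave functions $\psi_n\nearrow\psi$ may be needed to make these manipulations rigorous.

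Next, to verify the local coupling condition at some time $T>0$ for both $\varphi_1(V_1)$ and $\varphi_2(V_2)$, I use Lemma~\ref{lem:Harris\&coupling}. Because $\varphi$ is increasing and $v\mapsto\psi'(v)\varphi(v)$ is nondecreasing with infinite limit, the sublevel sets
\begin{equation*}
\{\varphi(V)\le A_2\}\quad\text{and}\quad\{\psi'(V)\varphi(V)\le A_1\}
\end{equation*}
are both contained in $\{V\le R\}$ provided $R$ is chosen large enough. Thus the Harris condition assumed on $\{V\le R\}$ implies Harris on each of these smaller sets, and Lemma~\ref{lem:Harris\&coupling} converts this into local coupling for both weights with constants $A_i$ we may choose to satisfy $A_i>K_i/\sigma_i$ (this forces $R$ to be sufficiently large, which is exactly the quantitative content of the ``$R$ large enough'' hypothesis). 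The interpolation condition \eqref{eq:InterpolationV1V0V3} is then obtained exactly as at the end of the proof of Theorem~\ref{thm:subgeometric_Harris_discrete}: setting $f(v):=\psi(v)/v$, $g(v):=\psi'(v)\varphi(v)/v$, $h:=g\circ f^{-1}$, the bound $\lambda V_1 \le \varphi_1(V_1)+\xi(\lambda)V_2$ holds with $\xi:=h^*$, so $\xi^*=h$ and the function $F$ of Theorem~\ref{th:sHarris-interpolated} coincides with $F_\psi$ of Theorem~\ref{thm:subgeometric_Harris_discrete}.

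With all hypotheses of Theorem~\ref{th:sHarris-interpolated} in place, its conclusion gives both the unique equilibrium $\mu^*\in\P_{\varphi(V)}$ and the total variation decay $\|S_t\nu\|\le C\widetilde\Theta(t)\|\nu\|_V$ with $\widetilde\Theta(t)=F^{-1}(t/2)/t=C\Theta_\psi(rt)/t$ after absorbing constants. I expect the main obstacle to be Step~1, specifically rigorously propagating the weak generator Lyapunov condition through the concave transform $\psi$ to obtain a semigroup-level bound on $\|S_t\mu\|_{\psi(V)}$: this is the analogue of Lemma~\ref{lem:Feller-Harris-semigroup} in the subgeometric regime, and it requires the Jensen-type inequality of Lemma~\ref{lem:concave_psi_Lyapunov}, an approximation to avoid issues with $\psi(V)$ lying outside the formal domain of $L$, and the c\`adl\`ag/mollification argument from Lemma~\ref{lem:Feller-Harris-semigroup} adapted to accommodate the additional $\|S_u\mu\|_{\varphi_1(\psi(V))}$ term on the right-hand side.
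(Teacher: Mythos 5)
Your proposal is correct and follows essentially the same route as the paper: the paper's proof also reduces to Theorem~\ref{th:sHarris-interpolated} with $V_2=V$, $V_1=\psi(V)$, using Lemma~\ref{lem:weak-gen-Gronwall} (the continuous-time analogue of Lemma~\ref{lem:concave_psi_Lyapunov}) and Corollary~\ref{cor:weak-gen-Gronwall} to upgrade the generator-level condition to the semigroup-level bound \eqref{eq:weak-SG-Lyapunov-hyp}, and the interpolation computation from the discrete case to identify $F$ with $F_\psi$. The only presentational difference is that where you invoke a Gronwall argument patterned on the geometric Lemma~\ref{lem:Feller-Harris-semigroup}, the paper instead integrates the near-monotonicity of $t\mapsto\|S_t\mu\|_{\varphi(V)}$ in time and divides by $t$, which is the step you correctly flag as the main technical point.
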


The proof of Theorem~\ref{thm:sHarris} is given in the rest of this section.

\begin{lem}
  \label{lem:weak-gen-Gronwall}
  Let $V \: \Omega \to [1,+\infty)$ be a continuous weight function and
  $\varphi \: [1,+\infty) \to [1,+\infty)$ a concave
  function with $\varphi(1)=1$.  If  a Feller-type stochastic semigroup $(S_t)$ satisfies the  weak generator Lyapunov condition \eqref{eq:weak-Lyapunov-generator} then it satisfies 
\beqn\label{eq:weak-Lyap-gen-psi}
L  \psi(V)  \le - \psi'(V) \varphi(V) + \psi'(V) b, 
\eeqn
for any concave function $\psi \: [1,+ \infty) \to [1,+\infty)$. Both conditions have to be understood when integrated along the semigroup flow and thus $L$ denotes the generator of the
associated Feller-Markov semigroup $(P_t)_{t \ge 0}$ such that $S_t = P_t^*$.
  \end{lem}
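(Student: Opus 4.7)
The plan is to mimic the discrete-time argument of Lemma~\ref{lem:concave_psi_Lyapunov} by combining Jensen's inequality for the Markov--Feller operator $P_t$ with the concavity of $\psi$ and the integrated form of the weak generator Lyapunov condition on $V$, and then to upgrade the resulting pointwise inequality into its semigroup-integrated version, which is the proper meaning of \eqref{eq:weak-Lyap-gen-psi} as stated in the lemma.

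First I would write the concave function $\psi$ as the infimum of the affine functions $\ell \in \cU_\psi$ with $\ell \ge \psi$. Since $P_t$ is positive and preserves constants, we have $P_t \ell(V) = \ell(P_t V)$ for any affine $\ell$, and taking the infimum yields the Jensen-type inequality $P_t \psi(V) \le \psi(P_t V)$. Combining this with the tangent-line inequality coming from the concavity of $\psi$ at the point $V$,
\[
\psi(P_t V) \le \psi(V) + \psi'(V)\bigl(P_t V - V\bigr),
\]
with the dual integrated form of the weak generator Lyapunov condition, $P_t V - V \le \int_0^t P_u(-\sigma\varphi(V)+b)\,\d u$, and with the nonnegativity of $\psi'(V)$ (since $\psi$ is nondecreasing, cf. Remark~\ref{rem:weakLyap1}), I arrive at the preliminary pointwise bound
\[
P_t \psi(V) - \psi(V) \le \psi'(V)\int_0^t P_u\bigl(-\sigma\varphi(V)+b\bigr)\,\d u.
\]

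The main obstacle is that in this preliminary bound $\psi'(V)$ sits outside both the time integral and the operator $P_u$, whereas the target integrated form of \eqref{eq:weak-Lyap-gen-psi} reads
\[
\int \psi(V)\,S_t\mu \le \int \psi(V)\,\mu + \int_0^t\!\!\int \psi'(V)\bigl(-\sigma\varphi(V)+b\bigr)\,S_u\mu\,\d u, \qquad \mu \in \M_V,\ \mu \ge 0.
\]
To bridge this gap I would divide the preliminary bound by $t$ and let $t \to 0^+$; strong continuity of $(P_t)$ on $C_0(\Omega)$ gives
\[
\limsup_{t\to 0^+}\frac{P_t\psi(V)-\psi(V)}{t}\ \le\ \psi'(V)\bigl(-\sigma\varphi(V)+b\bigr),
\]
which after testing against any $\mu \ge 0$ in $\M_V$ and using the semigroup property yields an upper bound on the right upper Dini derivative of $s\mapsto \int \psi(V)\,S_s\mu$ at every $s\ge 0$. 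A mollification argument in the spirit of Steps~2--4 of the proof of Lemma~\ref{lem:Feller-Harris-semigroup}---using the càdlàg regularity of $s \mapsto \int \psi(V)\,S_s\mu$ granted by the Feller property---then integrates this Dini inequality into the desired integrated conclusion. Integrability of $\psi'(V)\varphi(V)$ against $S_u\mu$ is granted by the weak Lyapunov condition applied to $V$ itself, since $\psi$ concave and nondecreasing forces $\psi'(V) \le \psi'(1)$ and $\psi(V)\in\M_V$ whenever $\mu\in\M_V$.
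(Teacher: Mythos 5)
Your proposal is correct and follows essentially the same route as the paper: Jensen's inequality $P_t\psi(V)\le\psi(P_tV)$ via the infimum over affine majorants, the tangent-line inequality from concavity, the integrated dual form of \eqref{eq:weak-Lyapunov-generator}, division by $t$ and passage to the limit, and finally the c\`adl\`ag/mollification machinery of Lemma~\ref{lem:Feller-Harris-semigroup} to recover the integrated form. The only imprecision is the appeal to strong continuity of $(P_t)$ on $C_0(\Omega)$, which does not literally apply to the unbounded functions $V$ and $\psi(V)$; but you correctly fall back on the c\`adl\`ag regularity of $s\mapsto\int\psi(V)\,S_s\mu$, which is exactly how the paper handles this point.
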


\begin{proof}[Proof of Lemma~\ref{lem:weak-gen-Gronwall}]

For the same reason as in the geometric case, we have \eqref{eq:cad}. As a consequence, we have at least 
$$
t \mapsto  \int \psi(V) \mu_t \ \hbox{ is c\`ad}, 
$$
or even it is continuous when $\psi(s)/s \to 0$ as $s\to\infty$.
On the other hand, for any $0 \le \mu \in \M_V$,  we have 
$$
\int  \mu_0 \bigl\{ P_{t_2} V + \sigma \int_{t_1}^{t_2} P_s \varphi(V) ds \bigr\} \le  \int \mu_0  \bigl\{ P_{t_1} V + b ({t_2}- {t_1})  \bigr\}, 
$$
which is nothing but the dual form of \eqref{eq:weak-gen-Lyapunov-hyp}, 
 so that 
  \beqn\label{eq:LyapInEqIntweakP}
 P_{t_2} V + \sigma \int_{t_1}^{t_2} P_s \varphi(V) ds
 \le P_{t_1} V + b ({t_2}- {t_1}).
 \eeqn
Using Jensen's inequality \eqref{eq:jensen2} and
\eqref{eq:LyapInEqIntweakP}, for any $h > 0$ we have 
\begin{align*}
P_h \psi(V) 
&\le \psi(P_h V) \le 
\psi \bigl(V + b h - \sigma \int_0^h P_s \varphi(V) ds \bigr)
\\
&\le \psi (V)  + \psi'(V) \bigl(b h - \sigma \int_0^h P_s \varphi(V) ds \bigr).
\end{align*}
By duality,  for any $0 \le \mu \in \M_V$, we deduce
\bean 
\int (S_h \mu_t) \psi(V) -  \int  \mu_t \psi(V)    \le b\int_0^h \int  (S_s (\psi'(V) \mu_t)) ds - \sigma \int_0^h\int  (S_s (\psi'(V) \mu_t)) \varphi(V) ds, 
\eean
for any $h > 0$ and $t\ge0$. Dividing by $h>0$ and passing to the limit $h \to 0$, we get 
$$
{d \over dt} \int  \mu_t \psi(V) + \sigma \int   \mu_t  \psi'(V) \varphi(V) \le b  \int   \psi'(V) \mu_0, 
$$
which is the rigorous definition of the weak generator Lyapunov condition \eqref{eq:weak-Lyap-gen-psi}.
 \end{proof}
 
 \begin{cor}
  \label{cor:weak-gen-Gronwall}
 If $(S_t)$ is a Feller-type stochastic semigroup which satisfies the  weak generator Lyapunov condition \eqref{eq:weak-Lyapunov-generator} then it satisfies
   \begin{equation}
    \| S_{t} \mu \|_{V} + \sigma t \|S_{t} \mu\|_{\varphi(V)}
    \leq \|\mu\|_{V} + b t (1 + \sigma t/2) \|\mu\|.
  \end{equation}

  \end{cor}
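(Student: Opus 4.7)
The plan is to combine the integrated form of the generator Lyapunov condition already proved in Lemma~\ref{lem:weak-gen-Gronwall} with an auxiliary monotone bound on $\|S_s\mu\|_{\varphi(V)}$ obtained by reapplying the same lemma with the special choice $\psi = \varphi$.

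First, I would note that the dual inequality \eqref{eq:LyapInEqIntweakP} integrated against $\mu$ (and using mass preservation) gives
\begin{equation*}
  \|S_t \mu\|_V + \sigma \int_0^t \|S_s \mu\|_{\varphi(V)} \d s
  \le \|\mu\|_V + b t \|\mu\|,
  \qquad t \ge 0.
\end{equation*}
To reach the conclusion it then suffices to bound $\sigma t \|S_t\mu\|_{\varphi(V)}$ from above by $\sigma \int_0^t \|S_s\mu\|_{\varphi(V)}\d s$ plus a quadratic error in $t$.

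Second, I would apply Lemma~\ref{lem:weak-gen-Gronwall} with $\psi := \varphi$, which is admissible since $\varphi$ is itself concave with $\varphi(1)=1$. This yields, in the integrated sense along the semigroup,
\begin{equation*}
  L \varphi(V) \le -\varphi'(V)\varphi(V) + \varphi'(V) b \le b,
\end{equation*}
where the last bound uses $\varphi'(V) \le \varphi'(1) \le 1$ (a consequence of concavity together with $\varphi(v) \le v$, which is inherited from the framework and is also the exact monotonicity embedded in Hypothesis~\ref{hyp:weak_semigroup_Lyapunov}). By duality and the same c\`adl\`ag argument used in Step 2 of the proof of Lemma~\ref{lem:Feller-Harris-semigroup}, this translates into the pointwise estimate
\begin{equation*}
  \|S_t \mu\|_{\varphi(V)} \le \|S_s \mu\|_{\varphi(V)} + b (t - s)\|\mu\|, \qquad 0 \le s \le t.
\end{equation*}

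Third, integrating this last inequality over $s \in [0,t]$ gives
\begin{equation*}
  t \|S_t\mu\|_{\varphi(V)} \le \int_0^t \|S_s \mu\|_{\varphi(V)} \d s + \frac{b t^2}{2}\|\mu\|,
\end{equation*}
and multiplying by $\sigma$ and adding to the first inequality produces exactly
\begin{equation*}
  \|S_t\mu\|_V + \sigma t \|S_t\mu\|_{\varphi(V)}
  \le \|\mu\|_V + b t \Bigl(1 + \frac{\sigma t}{2}\Bigr)\|\mu\|.
\end{equation*}

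The main delicate point is the second step: applying Lemma~\ref{lem:weak-gen-Gronwall} with the non-bounded, non-smoothed choice $\psi = \varphi$ requires that we can give meaning to $L\varphi(V)$ only through its integrated form, and that the bound $\varphi'(V)\le 1$ really holds on the range of~$V$; everything else is a clean Fubini/monotonicity argument. The remaining routine verifications (the c\`adl\`ag property of $t\mapsto\|S_t\mu\|_{\varphi(V)}$ used to legitimise the pointwise-from-integrated passage) are direct copies of the arguments already carried out in the geometric case.
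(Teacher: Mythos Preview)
Your proposal is correct and follows essentially the same three-step route as the paper's own proof: derive the integrated inequality for $\|S_t\mu\|_V$, apply Lemma~\ref{lem:weak-gen-Gronwall} with $\psi=\varphi$ and use $\varphi'\le 1$ to obtain the pointwise bound $\|S_t\mu\|_{\varphi(V)}\le\|S_s\mu\|_{\varphi(V)}+b(t-s)\|\mu\|$, then integrate in $s$ and combine. The only cosmetic difference is that the paper keeps the nonnegative term $\sigma\int_u^t\|S_s\mu\|_{\varphi'(V)\varphi(V)}\d s$ on the left and discards it after integration, whereas you drop it already at the generator level.
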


\begin{proof}[Proof of Corollary~\ref{cor:weak-gen-Gronwall}]
Because of the  weak generator Lyapunov condition \eqref{eq:weak-Lyapunov-generator}  and the non-expansive mappings property   \eqref{eq:Doeblin-contrac1}, we have  
$$
   \| S_{t} \mu \|_{V} + \sigma \int_0^t \|S_{u} \mu\|_{\varphi(V)} \d u
    \leq \|\mu\|_{V} + b t   \|\mu\|,
$$
for any $t \ge 0$. 
On the other hand, because of Lemma~\ref{lem:weak-gen-Gronwall} applied to $\psi := \varphi$, we have 
$$
   \| S_{t} \mu \|_{\varphi(V)} + \sigma \int_u^t \|S_{u} \mu\|_{\varphi'(V)\varphi(V)} \d u 
    \leq \|S_u \mu\|_{\varphi(V)} + b (t-u)   \|\mu\|.
$$
After time integration of that last estimate and throwing away the second term at the LHS, we get 
$$
 t  \| S_{t} \mu \|_{\varphi(V)}   
    \leq \int_0^t \|S_u \mu\|_{\varphi(V)} \, \d u + b \frac{t^2}2  \|\mu\|.
$$
Together with the first inequality, this allows us to conclude. 
 \end{proof}

 \begin{proof}[Proof of Theorem \ref{thm:sHarris}]
   Thanks to Corolary~\ref{cor:weak-gen-Gronwall}, we see that the
   hypotheses of Theorem~\ref{thm:sHarris} are met for $V_2 = V$ and
   $V_1 = \psi(V)$ for any $\psi$ as in the statement of
   Theorem~\ref{thm:subgeometric_Harris_discrete}.  We may then apply
   Theorem \ref{th:sHarris-interpolated} and conclude.
 \end{proof}
 
 The above result has to be compared with the already known following convergence result.

\begin{thm}[subgeometric Harris]\label{th:sHarris-HairerVersion}  
  Consider a Feller type stochastic semigroup $(S_t)_{t \geq 0}$ on
  $\M_{V}$ which satisfies both the generator Lyapunov condition
  (Hypothesis \ref{hyp:generator_Lyapunov}) and the Harris
  irreducibility condition (Hypothesis \ref{hyp:Harris}).  There holds
  \beqn\label{eq:DFGrate} \| S_t \nu \| \lesssim {1 \over H^{-1}(t)}
  \| \nu \|_V, \quad \forall \, t \ge 0, \,\, \forall \, \nu \in
  \cN_V, \eeqn where $H$ is defined by
  $\displaystyle{ H (u) := \int_1^u {ds \over \varphi(s)}}.  $ It is
  worth observing that \bean
  {1 \over H^{-1}(t)} &\simeq&{t^{-k/\delta}} \quad\hbox{when}\quad m =  \langle x \rangle^k, \,\, \varphi(u) =  u^{1-\delta/k}, \,\, 0 < \delta < k; \\
  {1 \over H^{-1}(t)} &\simeq& e^{-\lambda t^{\sigma/(\sigma+\delta)}}
  \quad\hbox{when}\quad m = e^{ \langle x \rangle^\sigma}, \,\,
  \varphi(u) = {u \over (\log u)^{\delta/\sigma}}, \,\, \delta,\sigma
  >0, \eean when $\Omega:= \R^d$ and
  $\langle x \rangle := (1+|x|^2)^{1/2}$.
\end{thm}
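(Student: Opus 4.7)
The plan is to deduce Theorem~\ref{th:sHarris-HairerVersion} from Theorem~\ref{thm:sHarris}, the point being to choose the auxiliary concave function $\psi$ appearing in the latter so that the resulting decay rate $\widetilde{\Theta}_\psi$ matches the target $1/H^{-1}(t)$. I first note that the subgeometric rate expressed through $\varphi$ forces one to read the Lyapunov assumption as the weak generator Lyapunov condition (Hypothesis~\ref{hyp:weak_generator_Lyapunov}), i.e.\ $LV \le -\sigma\varphi(V)+b$ in the integrated sense. Under this reading, Theorem~\ref{thm:sHarris} applies once the Harris condition holds on $\{V \le R\}$ with $R$ large enough: since $\varphi$ is nondecreasing, this set coincides with some $\{\varphi(V)\le 2R'\}$, and taking $R$ large forces $R' > 2K/\sigma$, which is the structural requirement there.

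The heart of the argument is the choice $\psi \simeq H$. Concretely, I would construct a strictly concave function $\psi \colon [1,+\infty)\to [1,+\infty)$ with $\psi(1)=\psi'(1)=1$, $\lim_{v\to\infty}\psi(v)=+\infty$, and $\psi'(v)=c/\varphi(v)$ for $v$ sufficiently large, the constant $c>0$ being tuned so that $v\mapsto \psi'(v)\varphi(v)$ is nondecreasing throughout $[1,\infty)$ and exceeds $R$ whenever $\varphi(v)>2R$. With this choice $\psi(v)\sim cH(v)$ at infinity, and the quantities entering Theorem~\ref{thm:subgeometric_Harris_discrete} satisfy $f(v)=\psi(v)/v\sim cH(v)/v$ and $g(v)=\psi'(v)\varphi(v)/v\sim c/v$, so that $h(u)=g(f^{-1}(u))\sim c/f^{-1}(u)$.

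The remaining step is the computation of $F_\psi^{-1}$ and its identification with $t/H^{-1}(t)$. Starting from $F_\psi(v)=(1/c)\int_v^1 f^{-1}(u)\d u$, the substitution $u=f(w)$ (with $f$ strictly decreasing by strict concavity of $\psi$ and $\psi(1)=\psi'(1)$) together with an integration by parts give
\begin{equation*}
F_\psi(v)=\frac{1}{c}\Bigl(1 - vf^{-1}(v) + \int_1^{f^{-1}(v)} f(w)\,\d w\Bigr) \sim H\bigl(f^{-1}(v)\bigr)
\end{equation*}
as $v\to 0$, the equivalence relying on $\int_1^N f(w)\,\d w \sim cH(N)$ up to multiplicative constants. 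Inverting and setting $\eta:=H^{-1}(t)$ gives $F_\psi^{-1}(t)\sim f(\eta)=cH(\eta)/\eta=ct/H^{-1}(t)$, and hence $\widetilde{\Theta}_\psi(t)=CF_\psi^{-1}(rt)/t\lesssim 1/H^{-1}(t)$, which is the stated rate. The polynomial and exponential asymptotics then follow by direct substitution, using $H(u)=(k/\delta)(u^{\delta/k}-1)$ for $\varphi(u)=u^{1-\delta/k}$ and $H(u)\sim (\log u)^{1+\delta/\sigma}/(1+\delta/\sigma)$ for $\varphi(u)=u/(\log u)^{\delta/\sigma}$.

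The main obstacle is the simultaneous fulfilment of all the structural conditions that Theorem~\ref{thm:subgeometric_Harris_discrete} imposes on $\psi$ — strict concavity, normalisation $\psi(1)=\psi'(1)=1$, monotonicity of $\psi'(v)\varphi(v)$, and the lower bound $\psi'(v)\varphi(v)>R$ on $\{\varphi(V)>2R\}$ — while keeping $\psi$ close enough to $H$ for the asymptotic identification $F_\psi^{-1}(t)\sim ct/H^{-1}(t)$ to survive; gluing $\psi$ on $[1,v_0]$ to the prescribed behaviour $c/\varphi$ on $[v_0,\infty)$ without breaking any of these conditions is the delicate point, whereas once $\psi$ is pinned down the remainder of the argument is routine asymptotics.
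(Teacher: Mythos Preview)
Your approach has a real gap, and it is worth noting that the paper itself does not claim a self-contained proof of this theorem: it explicitly states that Theorem~\ref{th:sHarris-HairerVersion} is due to \citet{GDF2009} and \citet{Hairer2016notes}, sketches their probabilistic argument (establish $\int_0^\infty \varphi(H^{-1}(s))\|\nu_s\|\,ds \le C\|\nu_0\|_V$ and integrate), and then says ``We have not been able to give a constructive deterministic proof of Theorem~\ref{th:sHarris-HairerVersion}''. What the paper does recover from its own Theorem~\ref{thm:sHarris} is only the rate $1/H^{-1}(t)$ in the two model examples, and crucially with \emph{different} choices of $\psi$ in each: $\psi\sim H$ for polynomial $\varphi$, but $\psi(u)=u^\kappa$ for $\varphi(u)=u/(\log u)^\alpha$.

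Your single choice $\psi\sim H$ does not work in general. In the case $\varphi(u)=u/(\log u)^\beta$ one has $f^{-1}(u)\sim c\,u^{-1}(\log u^{-1})^{\beta+1}$, hence
\[
F_\psi(\lambda)=\frac{1}{c}\int_\lambda^1 f^{-1}(u)\,du \sim \frac{(\log \lambda^{-1})^{\beta+2}}{(\beta+1)(\beta+2)},
\]
so $\Theta_\psi(t)=F_\psi^{-1}(t)\sim \exp(-c_1 t^{1/(\beta+2)})$ and $\widetilde\Theta_\psi(t)\sim t^{-1}\exp(-c_1 t^{1/(\beta+2)})$, whereas $1/H^{-1}(t)=\exp(-c_2 t^{1/(\beta+1)})$. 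The exponent $1/(\beta+2)$ is strictly worse than $1/(\beta+1)$, so your asserted equivalence $F_\psi^{-1}(t)\sim ct/H^{-1}(t)$ fails here. The underlying reason is visible in your integration-by-parts step: you implicitly assume $\int_1^N f(w)\,dw \sim c\,H(N)$ up to a fixed multiplicative constant, but $\int_1^N H(w)/w\,dw$ need not be comparable to $H(N)$ --- in the logarithmic example the ratio diverges like $\log N$. Thus deducing the general statement from Theorem~\ref{thm:sHarris} by a single universal choice of $\psi$ is precisely the obstruction the authors acknowledge they could not overcome.
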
  

 It is worth emphasizing that the above rates of convergence are precisely the same as those obtained by our method for the same two examples presented  at the end of the Section~\ref{sec:subgeometric-discrete} as made explicit in \eqref{eq:HarrisSubgeo2-poly} and \eqref{eq:HarrisSubgeo2-expo}. 
  
 \smallskip
Theorem~\ref{th:sHarris-HairerVersion} has been established in \citet*{GDF2009} and an
alternative proof has been proposed in \citet{Hairer2016notes}.  Both
are based on non constructive probabilistic arguments that we do not
present here.  We mention however that the proof of
Theorem~\ref{th:sHarris-HairerVersion} as found in \citet{GDF2009,
  Hairer2016notes} consists in establishing
$$
\int_0^\infty \varphi(H^{-1}(s)) \| \nu_s \| \, ds \le  C\| \nu_0 \|_V,
$$
for any $\nu_0 \in \cN_V$  (in fact for $\nu_0 = \delta_x - \delta_y$). Because $s \mapsto \| \nu_s \| $ is decreasing and $(H^{-1})' = \varphi(H^{-1})$, one deduces
\bean
H^{-1}(t) \| \nu_t \| 
&\le& H^{-1}(t) \| \nu_t \|  -  \int_0^t  H^{-1}(s) ( {d \over ds} \|  \nu_s \| )  ds   
 \\
&=& H^{-1}(0) \| \nu_0 \|   +  \int_0^t  \varphi(H^{-1}(s))  \|  \nu_s \|    ds 
\\
&\le& H^{-1}(0) \| \nu_0 \| + 
C\| \nu_0 \|_V,
\eean
which is nothing but \eqref{eq:DFGrate}.

\smallskip

We have not been able to give a constructive deterministic proof of
Theorem~\ref{th:sHarris-HairerVersion}.  However, our analysis makes
it possible to recover Theorem~\ref{th:sHarris-HairerVersion} for
some specific but common examples,  as explained at the very
  end of Section~\ref{sec:subgeometric-discrete}. We remark that our
results give constructive constants in all cases, which is an
improvement in all subgeometric cases.

% =========================================================================

\section{Existence  of   an equilibrium under a
  subgeometric Lyapunov condition}
\label{sec:equilibrium}

We give here a quite general result about existence of an
 equilibrium for a Feller-type stochastic semigroup which is
independent of our previous results and in particular does not need  a coupling or Harris condition.

We thus consider hereafter a Feller type stochastic
semigroup $(S_t)_{t \geq 0}$ and we assume that the weak generator Lyapunov condition
(Hypothesis~\ref{hyp:weak_semigroup_Lyapunov}) holds for a weight function
$V \: \Omega \to [1,+\infty)$, a concave function $\varphi \: [1,+\infty) \to [1,+\infty)$, for which we may assume $\varphi' \le 1$ without lost of generality, 
and some   constants $b, \sigma > 0$. Introducing the constant $R := \sup V \in [1,\infty]$, 
we furthermore assume that 
$$
\varphi(R) > b/\varsigma \quad\hbox{ and }\quad \{ V \le \rho \} \ \hbox{is compact 
for any} \ \rho \in [1,R), 
$$
the last condition being fundamental in the present approach which is based on the use of the Prokhorov theorem about compactness of tight sequences. 
More precisely, from the last condition and the Prokhorov theorem, we may claim that any sequence $(\mu_n)$ of $\P$ with uniformly (in $n$) bounded $\varphi(V)$-moment is relatively compact
in $\P$. 

\smallskip
By fixing $ \rho \in [1,R)$ large enough and $\eps > 0$ small enough such that $(\varsigma - \eps) \varphi(\rho) \ge b$, 
we deduce that  
$$
L V_3 \le - V_2 + b  \, {\bf 1}_{\CC}, 
$$
where $L$ is the generator of the associated Markov-Feller semigroup $(P_t)$ on $C_0(\Omega)$, 
$V_3 := V$, $V_2 := \eps \varphi(V)$ and $\CC := \{x \in \Omega \mid V_2(x) \le \rho \}$.

\smallskip
The above Foster-Lyapunov condition provides a sufficient condition
for the existence of an equilibrium. 

\begin{thm}
  \label{sHarris:IM-existence}
  Any stochastic semigroup $(S_t)$ on $\M_{V}$ which fulfills the
  above Lyapunov condition has at least one invariant probability
  measure $\mu^* \in \M_{\varphi(V)}$.
\end{thm}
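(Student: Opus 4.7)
The natural approach is a Krylov--Bogoliubov time-averaging argument: build candidate equilibria as time averages of a trajectory, use the weak Lyapunov bound to produce tight sub-sequences, then use the Feller property to pass the invariance to the limit. Concretely, I would pick any $x_0 \in \Omega$ with $V(x_0) < \infty$, set $\mu_0 := \delta_{x_0}$ and define the time-averages
\begin{equation*}
  \mu_t^{\mathrm{av}} := \frac{1}{t} \int_0^t S_s \mu_0 \, \d s \in \P, \qquad t>0.
\end{equation*}

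The first step is to obtain a uniform moment estimate on $(\mu_t^{\mathrm{av}})_{t\ge 1}$. Integrating the Foster--Lyapunov inequality $LV \le -\varepsilon \varphi(V) + b \, \mathbf{1}_{\CC}$ along the semigroup (as in \eqref{eq:Lyapunov-generator-mild-S}) gives, after discarding the positive term $\int V S_t \mu_0$ on the left and using $\mathbf{1}_{\CC}\le 1$,
\begin{equation*}
  \varepsilon \int_0^t \!\! \int \varphi(V) \, S_s \mu_0 \, \d s
  \;\le\; V(x_0) + b t,
\end{equation*}
hence $\int \varphi(V) \mu_t^{\mathrm{av}} \le V(x_0)/(\varepsilon t) + b/\varepsilon$, uniformly bounded for $t \ge 1$.

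The second step is tightness and extraction. Since $\{V \le \rho\}$ is compact for every $\rho \in [1,R)$, Markov's inequality combined with the moment bound and the growth of $\varphi$ at $R^{-}$ shows that $\mu_t^{\mathrm{av}}(\{V > \rho\})$ can be made arbitrarily small uniformly in $t$ by choosing $\rho$ sufficiently close to $R$; Prokhorov's theorem then yields a subsequence $\mu_{t_n}^{\mathrm{av}} \wto \mu^*$ in $\P$. The third step exploits the Feller property: for $\chi \in C_0(\Omega)$, $P_t \chi \in C_0(\Omega)$ and a telescoping computation gives
\begin{equation*}
  \Bigl| \int P_t \chi \, \d \mu_{t_n}^{\mathrm{av}} - \int \chi \, \d \mu_{t_n}^{\mathrm{av}} \Bigr|
  \le \frac{2 t \|\chi\|_\infty}{t_n} \xrightarrow[n\to\infty]{} 0.
\end{equation*}
Passing to the limit yields $\int P_t \chi \, \d \mu^* = \int \chi \, \d \mu^*$ for every $\chi \in C_0(\Omega)$, so $S_t \mu^* = \mu^*$ by the Markov--Riesz representation. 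Finally, taking $(\chi_k) \subset C_c(\Omega)$ with $0 \le \chi_k \nearrow \varphi(V)$ and using weak-$*$ convergence followed by monotone convergence, we get $\int \varphi(V) \mu^* \le b/\varepsilon$, so $\mu^* \in \P_{\varphi(V)}$.

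The main obstacle I expect is the tightness step when $R$ is finite: there $\{V \le R\}$ need not be compact and one only has compactness of strict sublevel sets, so it is crucial that $\varphi(\rho)$ be large enough for $\rho$ just below $R$ to kill the $V$-mass escaping through the non-compact region. The assumption $\varphi(R) > b/\varsigma$ together with the compactness of $\{V \le \rho\}$ for every $\rho < R$ is exactly what is needed to carry this through by a Chebyshev-type bound. A secondary technical point is the rigorous passage from the formal generator inequality to its integrated form, which requires interpreting $LV$ via the integrated expression \eqref{eq:Lyapunov-generator-mild-S}, as already discussed in the paper.
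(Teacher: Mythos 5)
Your proposal is correct in outline and its second half (extraction by Prokhorov, telescoping to pass invariance to the limit via the Feller property, lower semicontinuity of the $\varphi(V)$-moment) coincides with Step~2 of the paper's proof. Where you genuinely diverge is in how the uniform $\varphi(V)$-moment bound on the Ces\`aro averages is obtained. You integrate the Lyapunov inequality \eqref{eq:weak-gen-Lyapunov-hyp} along the trajectory of a single Dirac mass $\delta_{x_0}$ and discard the positive term $\|S_t\mu_0\|_V$, which immediately gives $\int \varphi(V)\,\mu_t^{\mathrm{av}} \le V(x_0)/(\sigma t) + b/\sigma$; this is shorter and entirely elementary, but it requires the initial datum to lie in $\P_V$. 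The paper instead splits the generator as $L = \AA + \BB$ with $\AA = b\chi$, shows via Duhamel that $S^*_\BB$ is a contraction on $\M$ with $\int_0^\infty \|S^*_\BB(t)\mu_0\|_{\M_{V_2}}\,\d t \le \|\mu_0\|_{\M_{V_3}}$, and interpolates to bound the Ces\`aro means $U_T$ as \emph{operators} on $\M_{V_2}$; this buys a bound of the form $\|U_T\mu_0\|_{V_2}\le C\|\mu_0\|_{V_2}$, i.e.\ one may start from any $\mu_0$ with finite $\varphi(V)$-moment rather than finite $V$-moment. For the bare existence statement your route suffices and is cleaner. One caveat to flag: your tightness step, like the paper's claim that bounded $\varphi(V)$-moment implies relative compactness in $\P$, really needs $\varphi(\rho)\to+\infty$ as $\rho\to R$; the inequality $\varphi(R) > b/\varsigma$ alone only makes the Chebyshev bound $M/\varphi(\rho)$ eventually smaller than $1$, not smaller than an arbitrary $\delta>0$, so it does not by itself ``kill the mass escaping through the non-compact region.'' Since the moment bound $M$ you obtain is of order $b/\sigma$ (or $b/\varepsilon$), not something that can be made small, this point deserves either the assumption $\lim_{\rho\to R}\varphi(\rho)=+\infty$ or a separate argument; the paper's proof implicitly relies on the same fact.
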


\begin{proof}[Proof of Theorem~\ref{sHarris:IM-existence}]
  {\sl Step 1. We prove that $(S_t)$ is bounded in the sense of Cesàro
    in $\M_{V_2}$. } 
    We
  define
  $$
  \AA := b \chi, \quad \BB := L - \AA,
  $$
  with $\chi \in C_0(\Omega)$ such that
  $\1_{\CC} \le \chi \le 1$.  Since $\BB$ is a bounded
  perturbation of $L$, we classically know that $\BB$ generates a
  semigroup $S_\BB$
  on the same space $C_0(\Omega)$
  and furthermore
  $$
 \BB \ge L - b,
 \quad \BB V_3 \le - V_2 \le 0.
  $$ 
  From the first inequality, we have $S_\BB(t) \ge e^{-bt} S_L(t) \ge 0$ for any $t \ge 0$, so that both $S_\BB$ and $S_\BB^*$ are positive semigroups.  
  Because of the Duhamel formula 
  $$
  S^*_{\BB} = S + S^*_\BB *(- \AA) S \le S,
  $$
and $S^*_\BB$ is a semigroup of contraction on $\M$. In particular  $S_{\BB} \in L^\infty_t (\BBB(\M))$,  where here and below, $L^\infty_t (\XX)$ denotes the space of bounded function from $\R_+$ into $\XX$. 
  From the same Duhamel formula, 
  we see that $S^*_\BB$ is well defined on $\M_{V_3}$ and has at least exponential growth rate. We can get a
  more accurate information. For $0 \le \mu_0$ in the domain of $S^*_\BB$ (defined in $\M_{V_3}$) and denoting $\mu_t := S^*_\BB(t) \mu_0$, we may compute 
    \begin{align*}
    \ddt \int \mu_t \, V_3 \le
    \int \mu_t \, \BB V_3 \le - \int \mu_t \, V_2, 
  \end{align*}
  so that 
  $$
   \int \mu_t \, V_3 + \int_0^t  \int \mu_s \, V_2  ds \le  \int \mu_0 \, V_3, \quad \forall \, t \ge 0.
   $$
  We deduce that  
  $$
  S^*_{\BB} \in L^\infty_t (\BBB(\M_{V_3})); \quad \int_0^\infty \|
  S^*_\BB(t) \mu_0 \|_{\M_{V_2}} \, dt \le \| \mu_0 \|_{\M_{V_3}}, \ \
  \forall \, \mu_0 \in \M_{V_3}.
  $$
We thus obtain  $S^*_{\BB} \in L^\infty_t (\BBB(\M_{V_2}))$, by interpolation together with the previous estimate  $S_{\BB} \in L^\infty_t (\BBB(\M))$. 
Alternatively, we could have used Lemma~\ref{lem:weak-gen-Gronwall}, in order to get
$$
\BB \varphi(V)  \le (-\varsigma \varphi(V) +b) \varphi'(V) - b \chi \varphi(V) \le b ( {\bf 1}_{\CC}  - \chi) \le 0, 
$$
next to compute directly 
$$
    \ddt \int (S^*_\BB(t) \mu_0) \, V_2  \le 0, 
$$
for $0 \le \mu_0$ in the domain (in $\M_{V_2}$) of $S^*_\BB$, and finally to deduce that $(S^*_\BB)$ is a semigroup of contractions in $\M_{V_2}$.
We next come back the splitting of the semigroup through the Duhamel
  formula
  $$
  S= S^*_\BB + S^*_\BB * \AA S,
  $$
  and we introduce the associated Cesàro means
  $$
  U_T := {1 \over T} \int_0^T S(t) \, dt, \quad V_T := {1 \over T}
  \int_0^T S^*_\BB(t) \, dt, \quad W_T := {1 \over T} \int_0^T (S^*_\BB *
  \AA S) (t) \, dt.
  $$
  We obviously have
  $$
  \| V_T \|_{\BBB(\M_{V_2})} \le {1 \over T} \int_0^T \| S^*_\BB(t)
  \|_{\BBB(\M_{V_2}) } \, dt \le 1.
  $$
  On the other hand, for $0 \le \mu_0 \in \M_{V_2}$, we have
  $$
  S^*_\BB(\tau) \int_0^{T-\tau} \AA \, S(s) \, \mu_0 \, ds \le
  S^*_\BB(\tau) \int_0^{T} \AA \, S(s) \, \mu_0 \, ds , \quad \forall \,
  T > \tau > 0,
  $$ 
  by positivity of the three operators involved in this integral
  formula, and then \bean \| W_T \mu_0 \|_{\M_{V_2}}
  &=& \Bigl\| {1 \over T} \int_0^T S^*_\BB(\tau) \int_0^{T-\tau} \AA \,
  S(s) \, \mu_0 \, d\tau ds \Bigr\|_{\M_{V_2}}
  \\
  &\le& {1 \over T} \int_0^\infty \Bigl\| S^*_\BB(\tau) \, \int_0^{T} \AA
  \, S(s) \, ds \mu_0 \Bigr\|_{\M_{V_2}} d\tau
  \\
  &\le& {1 \over T} \Bigl\| \int_0^T \AA \, S(s) \, ds \mu_0
  \Bigr\|_{\M_{V_3}} \le \| \AA \|_{\BBB(\M;\M_{V_3})} \| \mu_0 \|_{\M},
  \eean so that $W_T$ is uniformly bounded in
  $L_t^\infty(\BBB(\M_{V_2}))$. We then deduce that $U_T = V_T + W_T$ is
  also uniformly bounded in $L_t^\infty(\BBB(\M_{V_2}))$.

  \smallskip\noindent {\sl Step 2. Existence of an   invariant measure
    $\mu^* \in \M_{V_2}$.}  We define
  ${\Bbb K} :=\M_{V_2} \cap \P$ and we fix $\mu_0 \in {\Bbb K}$
  arbitrary. Because of Step~1, the sequence $\mu_T = U_T \mu_0$ is
  bounded in $ {\Bbb K}$.  By Prokhorov's theorem the embedding
  $\M_{V_2} \subset \M$ is compact, and hence there exists a
  subsequence $(\mu_{T_k})$ and $\mu^* \in {\Bbb K}$ such that
  $\mu_{T_k} \wto \mu^*$ in the weak-$*$ sense $\sigma(\M,C_0)$ as
  $k\to\infty$. For any fixed $s > 0$, we observe that \bean S(s)
  \mu^* - \mu^* &=& \lim_{k \to \infty} \Bigl\{ {1 \over T_k}
  \int_0^{T_k} S(s) S(t) \mu_0 - {1 \over T_k} \int_0^{T_k} S(t)
  \mu_0 \, dt \Bigr\}
  \\
  &=& \lim_{k \to \infty} \Bigl\{ {1 \over T_k} \int_{T_k}^{T_k+s}
  S(t) \mu_0 - {1 \over T_k} \int_0^s S(t) \mu_0 \, dt \Bigr\} =
  0, \eean so that $\mu^*$ is an  invariant measure.
\end{proof}

% =========================================================================

\section*{Acknowledgements}

J.~A.~C.~acknowledges the support of grant PID2020-117846GB-I00, the
research network RED2018-102650-T, and the María de Maeztu grant
CEX2020-001105-M from the Spanish government.

\bibliography{bibliography}

\end{document}